\documentclass[11pt]{amsart}

\usepackage[margin=1.4in]{geometry}
\usepackage[colorlinks,citecolor=cyan,linkcolor=blue]{hyperref}

\usepackage{amsmath}
\usepackage{amsthm}
\usepackage{amssymb}
\usepackage{thmtools}
\usepackage[all,cmtip]{xy}
\usepackage{enumerate}
\usepackage{enumitem}

\newcommand{\ep}{
\epsilon
}
\newcommand{\mc}[1]{
\mathcal{#1}
}
\newcommand{\mb}[1]{
\mathbb{#1}
}

\newtheorem*{thm*}{Theorem}
\newtheorem*{cor*}{Corollary}
\newtheorem*{pro*}{Proposition}

\newtheorem{thmA}{Theorem}

\newtheorem{corA}[thmA]{Corollary}

\newtheorem{propA}[thmA]{Proposition}

\newtheorem{thm}{Theorem}[section]

\newtheorem{cor}[thm]{Corollary}
\newtheorem{lemma}[thm]{Lemma}
\newtheorem{lem}[thm]{Lemma}
\newtheorem{pro}[thm]{Proposition}

\theoremstyle{definition}
\newtheorem{claim}[thm]{Claim}

\newtheorem*{defi*}{Definition}
\newtheorem{dfn}[thm]{Definition}

\author[Sisto]{Alessandro Sisto}
	\address{Maxwell Institute and Department of Mathematics, Heriot-Watt University,     Edinburgh, UK}
	\email{a.sisto@hw.ac.uk}

\author[Viaggi]{Gabriele Viaggi}
        \address{Department of Mathematics, University of Pisa, Pisa, Italy}
        \email{gabriele.viaggi@unipi.it}

\title{Distinguishing Gromov-Thurston manifolds using algebraic Dehn fillings}

\begin{document}

\maketitle

\begin{abstract}
We develop criteria to distinguish the homotopy types of Gromov-Thurston manifolds. Our approach is based on a description of their fundamental groups as virtual Dehn fillings of relatively hyperbolic groups.   
\end{abstract}

\section{Introduction}

The first known examples of closed manifolds admitting a Riemannian metric of pinched negative curvature but no locally symmetric one were constructed by Gromov and Thurston \cite{GT} in a way that we now recall. The input is what we will refer to here as a \emph{GT-pair}.

\begin{defi*}[GT-Pair]
A {\em GT-pair} of dimension $d$ and {\em wall} $W$ is a pair $(M,B)$ where $M$ is a closed orientable hyperbolic $d$-manifold and $B\subset M$ is a codimension 2 totally geodesic connected submanifold $B$ which is the boundary of a compact codimension 1 orientable totally geodesic submanifold $W$.     
\end{defi*}

Gromov and Thurston's examples are cyclic ramified coverings of $M$ branched over $B$ of some degree $k$, which we denote here by ${\rm GT}(M,B,k)$ and call them {\em GT-manifolds}. These manifolds turned out to possess many interesting topological and geometric features beyond the original motivation, and they have been studied quite extensively, see for instance Kapovich \cite{Kapovich}, Giralt \cite{Giralt}, Fine-Premoselli \cite{FP}, Monclair-Schlenker-Tholozan \cite{MST}, Hamenstädt-Jäckel \cite{HJ}, Guenancia-Hamenstädt \cite{GuHa}, Lafont-Minemyer \cite{LaMi}, Avramidi-Okun-Schreve \cite{L2}, Hamenstädt \cite{H}.

Despite this, the classifications of GT-manifolds up to homotopy equivalence, commensurability, or quasi-isometry are all poorly understood. One of the key issues is the absence of an underlying locally symmetric structure and the lack of suitable analogues of Mostow rigidity. In this article, we focus on the study of homotopy types of GT-manifolds.

First, we consider GT-manifolds coming from the {\em same} GT-pair. In even dimension, an Euler characteristic trick shows that different degrees yield different homotopy types (we discuss the Euler characteristic and volume approaches later on in the introduction). However, in odd dimension the situation is much more mysterious. For instance, prior to this paper, it was not known whether the $k$- and $(k+1)$-degree branched covers can be homotopy equivalent to each other. We show that GT-manifolds coming from the same GT-pair can only be homotopy-equivalent if their branching degrees satisfy an arithmetic constraint, if they have high-degree branching, in particular addressing the aforementioned case of $k$- and $(k+1)$-degree. 

The following is a simplified version of Theorem \ref{thmA:distinguish}.

\medskip

\begin{corA}
\label{cor:coprime}
Let $(M,B)$ be a {\rm GT-pair} of dimension $d\ge 3$, and let $M_k={\rm GT}(M,B,k)$. Then there exists $I>0$ such that whenever $k,k'\in\mb{N}$ are coprime and $k,k'>I$, we have that $M_k$ is not homotopy equivalent to $M_{k'}$.
\end{corA}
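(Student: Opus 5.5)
The plan is to pass to fundamental groups and use their description as Dehn fillings. Since GT-manifolds are aspherical (they carry negatively curved metrics), $M_k\simeq M_{k'}$ if and only if $\pi_1(M_k)\cong\pi_1(M_{k'})$. So it suffices to show that, for large coprime $k,k'$, these groups are not isomorphic. First I describe $\pi_1(M_k)$ uniformly in $k$. Remove a tubular neighbourhood of $B$ and take the infinite cyclic cover of $M\setminus B$ determined by intersection number with $W$; call it $\widehat{M}$. Then $G=\pi_1(M\setminus B)$ contains the subgroup $\pi_1(\widehat M)$, and $\pi_1(M_k)$ is obtained from the index-$k$ subgroup $G_k\le G$ (the kernel of $G\to\mathbb Z/k$) by killing the $k$-th power of the meridian $\mu$ of $B$. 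Thus $\pi_1(M_k)=G_k/\langle\langle \mu^k\rangle\rangle$ is a virtual Dehn filling. After cutting along a neighbourhood of $B$ and putting a suitable negatively curved metric on the complement, $G$ is hyperbolic relative to the peripheral subgroup $P=\pi_1(\partial N(B))\cong\pi_1(B)\times\mathbb Z$. By the relatively hyperbolic Dehn filling theorem (Osin, Groves--Manning), for $k$ large the filling is hyperbolic, $P/\langle\mu^k\rangle\cong \pi_1(B)\times\mathbb Z/k$ embeds, and the image of $\pi_1(B)\times \mathbb Z/k$ is almost malnormal.

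The key step is to extract from the abstract group $\Gamma_k=\pi_1(M_k)$ an invariant that sees $k$ modulo something. My first attempt is to characterise, intrinsically and up to conjugacy, the maximal subgroups of $\Gamma_k$ of the form $H\times F$ with $F$ finite cyclic and $H$ a non-elementary hyperbolic (codimension-2 surface-like) group. There is a catch: $\Gamma_k$ is torsion-free, being the fundamental group of a closed aspherical manifold, so the $\mathbb Z/k$ cannot survive literally. Indeed, in the cover the meridian $\mu_k$ of the branch locus is the image of $\mu^k$, and it dies because the branch locus is a codimension-2 submanifold $B$ whose normal circle bounds a disk. So the filled peripheral subgroup becomes $\pi_1(B)$ itself, acting with the ramified disk structure. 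The arithmetic invariant I will use is instead the relatively hyperbolic structure of $\Gamma_k$ relative to the copies of $\pi_1(B)$ (the branch locus, which is totally geodesic in the GT metric), together with the local structure near it: the link of $B$ in $M_k$ has cone angle $2\pi k$, detected group-theoretically by the number of ``wall-pieces'' around $B$. Concretely, $\Gamma_k$ splits as a graph of groups along $\pi_1(W)$-type subgroups (cutting $M_k$ along the $k$ lifts of $W$ gives $k$ copies of $M$ cut along $W$, glued cyclically around $B$). I aim to show this splitting is canonical for large $k$, for instance as the JSJ-type splitting relative to the peripheral $\pi_1(B)$ provided by the hyperbolic filling and Bowditch boundary cut-point structure. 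Then $k$ is recovered as the number of vertex groups in the cycle, up to the ambiguity coming from symmetries of the GT-pair.

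The main obstacle is this canonicity. For closed hyperbolic manifold groups there is no JSJ splitting over codimension-1 subgroups in the usual sense, so the $k$-fold cyclic structure is not obviously an isomorphism invariant. This is presumably why the statement only gives coprimality rather than $k\neq k'$. My fallback is therefore a coarser invariant. I will compare the images of the maximal ``parabolic'' subgroups under a putative isomorphism $\phi:\Gamma_k\to\Gamma_{k'}$. Using a rigidity result stated earlier in the paper, I will show that $\phi$ must send conjugates of $\pi_1(B)$ to conjugates of $\pi_1(B)$ for large $k,k'$, because these are the only maximal codimension-2 totally geodesic-type subgroups with large normal injectivity radius. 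This induces a map on the ``normal'' data. Taking the Dehn filling quotients of $\Gamma_k$ and $\Gamma_{k'}$ by a further common filling of the meridian, the two groups admit common quotients of the form $G_{k}/\langle\langle\mu^{kn}\rangle\rangle$. A gcd argument then shows that the induced identification forces $\gcd(k,k')$ to be large, which contradicts coprimality once $k,k'>I$.
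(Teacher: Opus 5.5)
\paragraph{There is a genuine gap.} Two parts of your proposal are fine: the reduction to $\pi_1(M_k)\not\cong\pi_1(M_{k'})$, since both manifolds are aspherical, and the description of $\Gamma_k=\pi_1(M_k)$ as a virtual Dehn filling of the relatively hyperbolic group $\pi_1(M-B)$. But the proposal never produces an isomorphism invariant of $\Gamma_k$ that detects $k$.

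The decomposition you hope to make canonical is not a graph of groups. The $k$ lifts of $W$ all meet along $B_k$. A $k$-cycle of vertex groups $\pi_1(\bar S_j)$ with edge groups $\pi_1(W)$ would give a surjection $\Gamma_k\to\mathbb{Z}$. That surjection would send the small loop around $B_k$, which crosses each lift of $W$ once, to a generator, yet this loop bounds a disc in $M_k$. The splittings that do exist, over the subgroups $\pi_1(\partial\bar S_j)$, are not shown to be respected by an abstract isomorphism, as you acknowledge.

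The fallback does not repair this, for three reasons.
\begin{enumerate}
\item Nothing in the paper says that an isomorphism $\Gamma_k\to\Gamma_{k'}$ carries conjugates of $\pi_1(B_k)$ to conjugates of $\pi_1(B_{k'})$. ``Large normal injectivity radius'' is not an algebraic property. For $d=3$, $\pi_1(B)\cong\mathbb{Z}$ is one maximal cyclic subgroup among infinitely many conjugacy classes of them.
\item Even if granted, $\pi_1(B_k)\cong\pi_1(B)$ for every $k$, so this alone says nothing about $k$. The ``normal data'' it is supposed to induce is never defined.
\item There is no meridian left to fill in $\Gamma_k$, since its image is already trivial there. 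Your groups $G_k/\langle\langle\mu^{kn}\rangle\rangle$ surject onto $\Gamma_k$ rather than being quotients of it. They also depend on your index-$k$ subgroup $G_k<\pi_1(M-B)$, which is not determined by $\Gamma_k$.
\end{enumerate}
So the closing gcd argument has nothing to act on.

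\paragraph{The missing idea.} You need an invariant equal to $k$ times a uniformly bounded integer. The paper uses $|\mathcal{C}_k|$, the number of conjugacy classes of injections $\pi_1(M-W)\to\Gamma_k$, which is clearly preserved by isomorphisms.

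It is a multiple of $k$ for the following reasons. $\rho_k$ has order exactly $k$ in ${\rm Out}(\Gamma_k)$ (Corollary \ref{cor:order_k}), and $\langle\rho_k\rangle$ acts freely on $\mathcal{C}_k$ (Lemma \ref{lem:free}). Both facts hold because in $Q_k$ the rotation is conjugation by the meridian $\gamma$ (Lemma \ref{lem:rotation is conjugation}), which has order exactly $k$. They also use that non-elementary subgroups of $Q_k$ not conjugate into $P_k$ have trivial centraliser (Lemma \ref{lem:centre}).

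It is at most $Ik$ because there are uniformly boundedly many injections modulo conjugation and $\langle\rho_k\rangle$ (Corollary \ref{cor:injections}). This is the hard part. It comes from the uniform displacement bound of Theorem \ref{thm:displ}, proved via limit $\mathbb{R}$-trees of the uniformly hyperbolic $X_k$ and Rips theory. It also needs the fact that $\pi_1(\bar S)$ does not split over subgroups locally covering $B$ (Proposition \ref{prop:no_split}).

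An isomorphism then forces $Nk=N'k'$ with $N,N'\le I$. If $\gcd(k,k')=1$, then $k\mid N'$, so $k\le I$. This is where both the coprimality hypothesis and the threshold $I$ come from. Nothing in your proposal plays the role of either the divisibility by $k$ or the uniform upper bound.
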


\medskip

In order to state the more precise version, we give a preliminary definition.

\begin{defi*}[$I$-Unrelated]
Let $I>0$ be an integer. We say that $k,k'\in\mb{N}$ are {\em $I$-unrelated} if at least one between $k/{\rm gcd}(k,k')$ and $k'/{\rm gcd}(k,k')$ is $>I$. 
\end{defi*}

For example, if $k,k'$ are $>I$ and coprime, then they are $I$-unrelated. Corollary \ref{cor:coprime} is an immediate consequence of the following.

\medskip

\begin{restatable}{thmA}{distinguish}
\label{thmA:distinguish}
Let $(M,B)$ be a {\rm GT-pair} of dimension $d\ge 3$, and let $M_k={\rm GT}(M,B,k)$. There exists $I>0$ such that whenever $k,k'\in\mb{N}$ are $I$-unrelated, we have that $M_k$ is not homotopy equivalent to $M_{k'}$.
\end{restatable}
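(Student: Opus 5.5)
Write $N=M\smallsetminus B$, so $\pi_1(M_k)$ is the quotient of $\pi_1$ of the $k$-fold cyclic cover $N_k\to N$ (obtained by cutting along $W$) by the normal closure of the $k$-th power $\mu^k$ of the meridian loop around $B$. The plan is to read this as an algebraic Dehn filling and extract from the group an invariant that recovers $k$ up to a bounded ambiguity. Let $G$ be $\pi_1$ of the infinite cyclic cover $\widetilde N$ of $N$ associated to the homomorphism $\pi_1(N)\to\mb{Z}$ counting intersections with $W$. Then $\pi_1(M_k)$ is a Dehn filling of a finite-index subgroup of a group hyperbolic relative to the peripheral subgroup $P=\pi_1(B)\times\langle\mu\rangle$. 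The intermediate picture is that $\pi_1(M_k)$ is hyperbolic and that the image of $P$ is the finite-central-extension-type subgroup $\pi_1(B)\times\mb{Z}/k$. This cannot literally sit inside a torsion-free group, so the actual invariant should be the conjugacy class of the maximal subgroup of $\pi_1(M_k)$ of the form $\pi_1(B)$-by-something coming from the branch locus. More robustly, the plan uses the JSJ/quasi-convex splitting of $\pi_1(M_k)$ along the lifts of $W$.

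\textbf{Step 1.} Apply the relatively hyperbolic Dehn filling theorem (Osin, Groves--Manning) to the cyclic extension, obtained by adjoining the deck rotation to the filled group. This shows that for $k\ge I_0$ the group $Q_k=\pi_1(\text{orbifold } M\text{ with cone angle }2\pi/k \text{ along } B)$ is hyperbolic, and that $\pi_1(M_k)$ is the kernel of $Q_k\to\mb{Z}/k$. Moreover, the image of $\pi_1(B)\times\mb{Z}/k$ in $Q_k$ is an almost malnormal quasi-convex subgroup, and the filling is injective on large balls.

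\textbf{Step 2.} Characterize $Q_k$ intrinsically from $\pi_1(M_k)$. Under a homotopy equivalence, the isomorphism $\pi_1(M_k)\cong\pi_1(M_{k'})$ extends to commensurations. Using Mostow-type rigidity of the hyperbolic pieces, namely that the peripheral $\pi_1(B)$ and the lifts of $\pi_1(W)$ are rigid hyperbolic-manifold groups, together with the recognition of $\pi_1(B)$ as the canonical maximal subgroup along which $\pi_1(M_k)$ splits as a graph of groups (Bowditch JSJ-type argument for hyperbolic groups, using $d\ge3$ so that $B$ has dimension at least $1$ and $\pi_1(B)$ has nontrivial boundary), show that the isomorphism maps the branch subgroup $\pi_1(B)$ to a conjugate of $\pi_1(B)$.

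\textbf{Step 3.} Read off $k$ from the local structure around $B$. The link of $\pi_1(B)$ in the splitting has exactly $k$ adjacent pieces (lifts of $W$), cyclically arranged. Passing to a common finite cover of degree $\gcd(k,k')$, one shows that the number of wall pieces adjacent to $B$ in $M_k$ is determined up to a factor bounded by a constant $I$. This constant depends on the finitely many ways the rigid peripheral structure of $(M,B)$ can be realized, for instance via commensurator finiteness of $\pi_1(N)$. If $k/\gcd$ or $k'/\gcd$ exceeds $I$, the counts are incompatible.

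\textbf{Main obstacle.} I expect the main obstacle to be Step 2: proving that any abstract isomorphism respects the branch subgroup and the cyclic wall structure. I would first try to show that the branch subgroups are exactly the maximal subgroups isomorphic to $\pi_1(B)$ whose boundary spheres separate $\partial\pi_1(M_k)$ into $k$ components. This is a topological property of the Gromov boundary, and hence preserved by any isomorphism.
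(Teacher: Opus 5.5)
Your Step~1 is essentially the paper's setup (Lemmas~\ref{lem:van kampen} and~\ref{lem:rotation is conjugation}, Proposition~\ref{pro:order_k}), but Steps~2 and~3 do not work.

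\textbf{Step 2.} There is no splitting of $\pi_1(M_k)$ for Step~2 to recognise. $\pi_1(M_k)$ is the fundamental group of a closed aspherical $d$-manifold. The Mayer--Vietoris argument of Case~(1) in Proposition~\ref{prop:no_split} shows that such a group cannot split over a subgroup of homological dimension $\le d-2$, such as $\pi_1(B)$ or any of its subgroups. The lifts of $W$ have boundary $B_k$, so $M_k$ is a ``book'' of $k$ sectors all meeting along $B_k$, not a graph of spaces. Bowditch's JSJ decomposition of $\pi_1(M_k)$ is trivial, because its boundary is $S^{d-1}$ and has no local cut points. Your fallback boundary criterion also fails. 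The limit set of $\pi_1(B)$ is a $(d-3)$-sphere inside $\partial\pi_1(M_k)\cong S^{d-1}$, so by Alexander duality its complement is connected for every $k$; $k$ is never visible as a number of components. In addition, Mostow rigidity of $\pi_1(B)$ needs $d\ge 5$. For $d=3$ one has $\pi_1(B)\cong\mb{Z}$ and $\pi_1(W)$ free, yet the statement covers $d\ge 3$.

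\textbf{Step 3 and the missing idea.} Even if an isomorphism $\pi_1(M_k)\to\pi_1(M_{k'})$ carried $\pi_1(B_k)$ to a conjugate of $\pi_1(B_{k'})$, this would not recover $k$, since $\pi_1(B_k)\cong\pi_1(B)$ for every $k$. What matters is where the sector groups $\pi_1(\bar S_j)$ go, and an abstract isomorphism may send them to non-standard embedded copies of $\pi_1(\bar S)$. Step~3's claim that the count of adjacent pieces is ``determined up to a bounded factor'' is exactly the uniform-in-$k$ statement that needs proof, and you give no mechanism for it. Appealing to ``finitely many realisations'' or commensurator finiteness does not supply one.

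The missing idea is an invariant that requires no structure preservation: the number $|\mc{C}_k|$ of conjugacy classes of injections $\pi_1(\bar S)\to\pi_1(M_k)$. The paper proves two things about it.
\begin{enumerate}
\item[(i)] $\langle\rho_k\rangle$ acts freely on $\mc{C}_k$. This holds because $\rho_k$ is conjugation by the meridian in $Q_k$, and non-elementary subgroups not conjugate into $P_k$ have trivial centraliser (Lemmas~\ref{lem:centre} and~\ref{lem:free}). Hence $k$ divides $|\mc{C}_k|$.
\item[(ii)] The number of $\langle\rho_k\rangle$-orbits is at most some $I$ independent of $k$. This is the core of the paper (Theorem~\ref{thm:displ}). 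Injections with unbounded minimal displacement on the uniformly hyperbolic spaces $X_k$ would yield a limiting action of $\pi_1(\bar S)$ on an $\mb{R}$-tree. After the tree-graded, horoball and cone-off analyses and the Rips machine, this gives a splitting over a subgroup locally covering $B$, which Proposition~\ref{prop:no_split} rules out.
\end{enumerate}
Then $|\mc{C}_k|=|\mc{C}_{k'}|$ reads $Nk=N'k'$ with $1\le N,N'\le I$. This forces $k/\gcd(k,k')\mid N'$ and $k'/\gcd(k,k')\mid N$, so $k$ and $k'$ are not $I$-unrelated.
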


\medskip

The obstruction we use to distinguish GT-manifolds is, roughly, the number of injections from a certain hyperbolic group into their fundamental group. Said hyperbolic group is in fact the fundamental group of the complement of a wall. Specifically, Theorem \ref{thmA:distinguish} is a consequence of the following result.

\medskip

\begin{restatable}{thmA}{main}
\label{thmA:main}
Let $(M,B)$ be a {\rm GT-pair} of dimension $d\ge 3$, with wall $W$, and let $M_k={\rm GT}(M,B,k)$. There exists an integer $I>0$ such that for all $k\ge 2$ the following holds. The number of conjugacy classes of injections $\pi_1(M-W)\to \pi_1(M_k)$ is $Nk$ for some $N\leq I$.
\end{restatable}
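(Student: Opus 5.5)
Throughout, let $\Gamma=\pi_1(M)$, $H=\pi_1(M-W)$ and $G_k=\pi_1(M_k)$. The approach is to view $G_k$ as a virtual Dehn filling of a relatively hyperbolic group, and then count injections of $H$ by a Bestvina--Paulin style compactness argument for large $k$.

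\textbf{Step 1: description of $G_k$.} Let $X=M-B$. Then $\pi_1(X)$ is hyperbolic relative to the fundamental group $P$ of a regular neighbourhood boundary, with $P\cong\pi_1(B)\times\mathbb{Z}$; here the $\mathbb{Z}$ factor is generated by the meridian $\mu$. After passing to the compactification of $X$ cut along the normal circle bundle, this peripheral structure is available by the standard theory for complements of totally geodesic codimension-2 submanifolds.

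The $k$-fold cyclic cover $X_k$ is determined by the homomorphism $\pi_1(X)\to\mathbb{Z}/k$ obtained by counting intersections with $W$. Then $G_k=\pi_1(X_k)/\langle\langle \mu^k\rangle\rangle$, and the filling kernels $\langle\mu^k\rangle$ become arbitrarily deep as $k\to\infty$. The group $\pi_1(X_k)$ is hyperbolic relative to $\pi_1(B)\times k\mathbb{Z}$.

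Cut $X_k$ along the $k$ lifts $W_1,\dots,W_k$ of $W$. Its pieces are $k$ copies of $M-W$, each carrying a copy of $H$; these give $k$ injections $H\to G_k$, pairwise non-conjugate. This accounts for the factor $k$. The deck group $\mathbb{Z}/k$ acts on $G_k$, up to inner automorphisms, and permutes these $k$ classes freely.

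\textbf{Step 2: the $\mathbb{Z}/k$-action is free on all classes.} The deck action should permute all conjugacy classes of injections freely. Assuming this, the total number of classes is $Nk$, where $N$ is the number of orbits. It remains to show that $N$ is bounded independently of $k$. Equivalently, we must control injections $H\to G_k$ up to conjugacy and deck transformations.

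\textbf{Step 3: a uniform bound on $N$.} This is the main obstacle. I would argue by contradiction. Suppose there are $k_n$ and injections $\phi_n\colon H\to G_{k_n}$ lying in pairwise distinct orbits, with the number of orbits tending to infinity. For large $k$, the fillings $G_k$ are hyperbolic with uniform constants relative to the filled peripherals; this comes from Dehn filling theory (Osin, Groves--Manning) applied to the sequence $\pi_1(X_k)$, working on a common cover space, namely the universal cover of $X$ coned off. Pass each $\phi_n$ to a representative minimizing displacement on a fixed finite generating set of $H$.

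If the displacements are unbounded, rescale the space and take a limit. This yields a nontrivial isometric action of $H$ on an $\mathbb{R}$-tree, or in the relative setting on a limiting tree-graded space, with controlled arc stabilizers. Since $H$ is the fundamental group of a hyperbolic manifold with totally geodesic boundary and $d\ge3$, I would use Rips theory to extract a splitting of $H$ over virtually abelian, or peripheral-type, subgroups. Injectivity of the $\phi_n$ together with the one-endedness and rigidity of $H$ should give a contradiction; here we use that $M-W$ has a totally geodesic boundary that is not Fuchsian-flexible. The alternative is that the displacements stay bounded. Then the $\phi_n$ factor through finitely many maps $H\to\pi_1(X)$, up to the $\mathbb{Z}/k$ ambiguity. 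Further, since injections must avoid the deep filling kernels, they lift to $\pi_1(X_k)$. Homomorphisms $H\to\pi_1(X)$ landing in finite-index covers up to conjugacy are finite in number, by Mostow-type rigidity of the relatively hyperbolic group $\pi_1(X)$ (Dahmani--Groves, Paulin). So only finitely many orbits occur, and we take $I$ to be this count.

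Finally, for the remaining small values of $k$, apply the finiteness of injections up to conjugacy into a fixed hyperbolic group. This finiteness holds because $H$ does not split over virtually cyclic subgroups relative to its boundary structure. Enlarge $I$ to cover these finitely many $k$, and adjust $N$ accordingly.
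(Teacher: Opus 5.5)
Your outline has the right skeleton: virtual Dehn filling, rescaled limits, and the fact that bounded displacement gives boundedly many injections up to conjugacy and rotation. However, the two steps that carry the proof are missing, and the central one would not go through as you describe it.

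In the unbounded-displacement case, Rips theory does not apply directly to the limit $\mathbb{R}$-tree. Only stabilisers of arcs not contained in a limit of horoballs are trivial or maximal cyclic. Arcs inside such limits have stabilisers that are merely known to \emph{locally cover} $B$: their finitely generated subgroups embed in $\pi_1(B)$, which is non-elementary hyperbolic when $d\ge 4$. So these stabilisers are not small, and the action need not be stable. The paper needs a trichotomy here.
\begin{enumerate}
\item If some arc of the minimal subtree meets each limit horoball in at most a point, it collapses the tree-graded structure (Drutu--Sapir). This gives a tree with trivial or cyclic arc stabilisers.
\item If every arc is covered by finitely many limit horoballs, it builds a simplicial tree on the horoballs and their intersection points. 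The edge stabilisers of this tree locally cover $B$.
\item Otherwise it passes to the uniformly acylindrical coned-off spaces and applies Groves--Hull and Guirardel.
\end{enumerate}
Correspondingly, the contradiction requires that $H=\pi_1(\bar S)$ does not split over \emph{any} subgroup locally covering $B$, including non-finitely-generated ones. One-endedness and ``rigidity'' do not give this. Note that $\pi_1(\partial\bar S)=\pi_1(W)*_{\pi_1(B)}\pi_1(W)$ itself splits over $\pi_1(B)$. The paper handles two cases.
\begin{enumerate}
\item When $\pi_1(\partial\bar S)$ is elliptic, it uses Mayer--Vietoris in the double $D\bar S$. The input is that such edge groups are either isomorphic to finite-index subgroups of $\pi_1(B)$ or have homological dimension $\le d-3$.
\item Otherwise, it shows that no boundary component coarsely separates the universal cover. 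The key fact is that the Gromov boundary minus the limit set of that component is connected.
\end{enumerate}

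Second, you assume rather than prove that the rotation acts freely on all conjugacy classes of injections, and this is exactly what gives divisibility by $k$. The paper identifies $\rho_k$ with conjugation by the meridian $\gamma$ in $Q_k=\pi_1(M-B)/\langle\langle\gamma^k\rangle\rangle$. It then uses that, for large $k$, non-elementary subgroups of $Q_k$ not conjugate into $P_k\cong\pi_1(B)\times\mathbb{Z}/k$ have trivial centraliser. So $\rho_k^j[\iota]=[\iota]$ forces $\gamma^j\in\pi_1(M_k)$, which is impossible for $0<j<k$.

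Some smaller points:
\begin{enumerate}
\item In the bounded case you must rule out the minimising point lying deep in a horoball, using that $H$ does not embed in $\pi_1(B)$.
\item No rigidity of $\pi_1(M-B)$ is needed there. Bounded-length lifts of the generators already give boundedly many maps up to conjugation in $Q_k$. Such a conjugation is an inner automorphism of $\pi_1(M_k)$ composed with a power of $\rho_k$.
\item For small $k$ you need $H$ not to split over cyclic subgroups absolutely, not merely relative to its boundary.
\end{enumerate}
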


\medskip

The strategy of proof for Theorem \ref{thmA:main} is inspired by \cite{DG18}. We give an outline of the argument at the end of the introduction. Given Theorem \ref{thmA:main}, the proof of Theorem \ref{thmA:distinguish} goes as follows. If $\pi_1(M_k)\simeq\pi_1(M_{k'})$ then the number of conjugacy classes of injections of $\pi_1(M-W)$ in $\pi_1(M_k)$ and $\pi_1(M_{k'})$ agree. However, by Theorem \ref{thmA:main}, there exists $I>0$ such that the number of conjugacy classes of injections is $Nk$ for $\pi_1(M_k)$ and $N'k'$ for $\pi_1(M_{k'})$ with $N,N'\le I$. If $k,k'$ are $I$-unrelated we can never have $Nk=N'k'$. Theorem \ref{thmA:distinguish} follows.

As we already mentioned, Euler characteristic and (simplicial) volume provide two methods to distinguish GT-manifolds coming from the {\em same} GT-pair. However, they yield limited information when the GT-manifolds come from {\em different} GT-pairs. We address this by studying the algebraic structure of ${\rm Out}(\pi_1(M_k))$, which we also believe to be of independent interest. Intuitively, ${\rm Out}(\pi_1(M_k))$ should contain a cyclic subgroup of order $k$ (the deck group of $M_k\to M$) plus some uniformly bounded noise that comes from the lifts of some symmetries of the base $M$. Again relying on Theorem \ref{thmA:main}, we prove the following.

\medskip

\begin{restatable}{thmA}{Out}
\label{thmA:Out}
Let $(M,B)$ be a {\rm GT-pair} of dimension $d\ge 3$ and let $M_k={\rm GT}(M,B,k)$.
There exists an integer $I>0$ such that, for all $k\ge 2$, $|{\rm Out}(\pi_1(M_k))|$ divides $I^kk$.
\end{restatable}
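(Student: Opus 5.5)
The plan is to let ${\rm Out}(\pi_1(M_k))$ act on the finite set $\mathcal{X}_k$ of conjugacy classes of injections $\pi_1(M-W)\to\pi_1(M_k)$. By Theorem \ref{thmA:main}, $|\mathcal{X}_k|=Nk$ with $N\le I_0$ for a constant $I_0$ independent of $k$. The action is by post-composition: $[\phi]\cdot[\iota]=[\phi\circ\iota]$. It is well defined on conjugacy classes because inner automorphisms act trivially. Since $M_k$ is a closed negatively curved manifold, $\pi_1(M_k)$ is a torsion-free hyperbolic group that is not virtually cyclic (as $d\ge 3$), so ${\rm Out}(\pi_1(M_k))$ is finite (Paulin/Bestvina--Feighn together with Mostow-type finiteness for closed aspherical manifolds in dimension $\ge 3$, or via the absence of splittings). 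The orbit--stabilizer theorem then gives $|{\rm Out}| = |\text{orbit}|\cdot|\text{Stab}([\iota])|$ for any $[\iota]\in\mathcal{X}_k$.

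\textbf{Step 1: the orbit.} The orbit of $[\iota]$ has size at most $Nk\le I_0k$. I expect the deck group $\mathbb{Z}/k$ to act freely on $\mathcal{X}_k$. Indeed, the $Nk$ classes presumably arise as $N$ ``types'' of lifts of $M-W$ (coming from symmetries of $M$), each with $k$ translates under the deck group. So the orbit has size $mk$ for some $m\le I_0$. To make this rigorous I would use the structure from the proof of Theorem \ref{thmA:main}: the injections are exactly the lifts of the $N$ maps $\pi_1(M-W)\to\pi_1(M)$ to the $k$ sheets. On each sheet the deck group acts by cyclic permutation, so every orbit of the deck group is a free $\mathbb{Z}/k$-orbit, and the ${\rm Out}$-orbit is a union of such orbits.

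\textbf{Step 2: the stabilizer.} An outer automorphism fixing $[\iota]$ can be represented by an automorphism $\phi$ with $\phi\circ\iota=\iota$ on the nose, after composing with an inner automorphism. Since $\iota(\pi_1(M-W))$ is one sheet-piece, it should generate $\pi_1(M_k)$ together with a bounded collection of other pieces. The natural presentation of $\pi_1(M_k)$ is a graph-of-groups or amalgam of $k$ copies of $\pi_1(M-W)$ glued cyclically along copies of $\pi_1(W)$, in a virtual Dehn filling picture. In this presentation, $\phi$ fixing one copy pointwise forces it to map the adjacent copy to one of at most $I_1$ possible images, again by Theorem \ref{thmA:main} applied with the constraint of agreeing on $\pi_1(W)$. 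Propagating around the $k$ pieces, each step has at most $I_1$ choices, so $|\text{Stab}|\le I_1^{k}$.

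\textbf{Step 3: divisibility.} Bounds alone do not give divisibility, so I would refine the argument as follows. The stabilizer acts on the choices for the next piece, which gives a chain of subgroups ${\rm Stab}=S_0\supseteq S_1\supseteq\dots\supseteq S_k=\{1\}$. Here $S_j$ fixes the first $j$ pieces, and $S_j$ is trivial once all pieces are fixed, since the pieces generate. Each index $[S_j:S_{j+1}]$ is the size of an orbit in a set of size $\le I_1$, so it divides $I_1!$. Similarly the orbit size is $mk$ with $m\le I_0$, so $m\mid I_0!$. Taking $I=\max(I_0,I_1)!$ gives $|{\rm Out}|=mk\prod_j[S_j:S_{j+1}]$, which divides $I\cdot I^{k}\cdot k$. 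Enlarging $I$ (for instance squaring it, and handling $k\ge 2$) absorbs the extra factor and yields that $|{\rm Out}(\pi_1(M_k))|$ divides $I^kk$.

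The main obstacle is Step 2. One has to show that an automorphism fixing one piece is determined by boundedly many choices per piece, and that fixing all pieces forces triviality. This needs a rigidity statement: an injection of $\pi_1(M-W)$ that agrees with a given one on a boundary wall group must be one of boundedly many. I would extract this from the same relative hyperbolicity and Dehn filling machinery used for Theorem \ref{thmA:main}.
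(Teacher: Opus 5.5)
You have the paper's architecture. The paper also uses orbit--stabiliser for the action of ${\rm Out}(\pi_1(M_k))$ on conjugacy classes of injections of $\pi_1(\bar S)$, and an orbit of size $mk$ with $m\le N$. It then uses a chain of stabilisers of the $k$ sector inclusions that ends at the trivial group, with every index bounded independently of $k$: $H_j={\rm Stab}([\iota_j])$, $[G_k:H_1]=N_1k$, $[\bigcap_{j\le r}H_j:\bigcap_{j\le r+1}H_j]\le N$, and $\bigcap_j H_j=1$. Your bookkeeping normalises to automorphisms fixing $\pi_1(\bar S_1)$ pointwise. Then $S_j$ acts on the set $\Omega_j$ of injections of the next sector group that agree with the current one on the shared wall group, and the stabiliser of the inclusion is $S_{j+1}$. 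This is a legitimate variant of the same chain, and it makes triviality of the last term automatic. The problem is that the two facts that carry the proof are not established, and the routes you suggest for them do not work.

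The crux is a $k$-independent bound on $|\Omega_j|$, and here there is a genuine gap. Theorem \ref{thmA:main} counts $Nk$ conjugacy classes. Imposing agreement on $\pi_1(W)$ can at best reduce each class to one injection, so you would get up to $Nk$ choices per wall. With indices bounded only by $Nk$, you cannot conclude that the product of $k$ indices divides $I^kk$ for a fixed $I$.

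The missing idea is to combine Lemma \ref{lem:rotation is conjugation} (the rotation is conjugation by $\gamma$ in $Q_k$) with Lemma \ref{lem:centre} (trivial centralisers there). Suppose $\psi,\psi'\in\Omega_j$ and $\psi'=c_g\circ\rho_k^r\circ\psi$ with $g\in\pi_1(M_k)$. Then in $Q_k$ the element $g\gamma^r$ centralises $\psi(\pi_1(W))$. This subgroup is non-elementary and not conjugate into $P_k$ (argue as in Lemma \ref{lem:free}), so $g\gamma^r=1$. Since $\gamma$ generates $Q_k/\pi_1(M_k)\cong\mathbb{Z}/k$, this forces $k\mid r$ and $g=1$, i.e.\ $\psi'=\psi$. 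Hence $|\Omega_j|$ is at most the number of injections up to conjugation and rotation, which is $\le N$ by Corollary \ref{cor:injections} (not Theorem \ref{thmA:main}). The paper uses exactly this wall-level freeness, phrased as disjointness of $\langle\rho_k\rangle$-orbits in $G_k/H_{r+1}$, to get $[H_r:H_r\cap H_{r+1}]\le N$.

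The same computation with $\pi_1(\bar S)$ in place of $\pi_1(W)$ is Lemma \ref{lem:free}. Together with Corollary \ref{cor:order_k}, that is what your Step 1 actually needs. The bare count $|\mathcal{X}_k|=Nk$ says nothing about the sizes of individual ${\rm Out}$-orbits. Your description of injections as lifts of $N$ maps to $k$ sheets is not something the proof of Theorem \ref{thmA:main} provides. Finally, these lemmas hold only for $k$ large; since each ${\rm Out}(\pi_1(M_k))$ is finite, the finitely many small $k$ can be absorbed into $I$.
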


\medskip

As an application, we use Theorem \ref{thmA:Out} to distinguish the homotopy types of Gromov-Thurston manifolds with non-isometric branching loci.

\medskip

\begin{restatable}{thmA}{Sylow}
\label{thmA:Sylow}
Let $(M,B)$, $(M',B')$ be {\rm GT-pairs} of dimension $d\ge 5$, and let $M_k={\rm GT}(M,B,k)$, $M'_k={\rm GT}(M',B',k)$. Suppose that $B$ is not isometric to $B'$. Then there exists $I>0$ such that the following holds. If $k,k'>I$ and $k$ has all prime factors $>I$, then $M_k$ is not homotopy equivalent to $M'_{k'}$.
\end{restatable}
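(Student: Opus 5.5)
Suppose for contradiction that $M_k$ and $M'_{k'}$ are homotopy equivalent. Then $\pi_1(M_k)\cong\pi_1(M'_{k'})$ and in particular ${\rm Out}(\pi_1(M_k))\cong{\rm Out}(\pi_1(M'_{k'}))$. The plan is to produce, inside this common outer automorphism group, a cyclic subgroup whose order is forced to be $k$ (from the $M$ side) and $k'$ (from the $M'$ side). Then we will show that the fundamental group of the branching locus is recovered from this subgroup. Since $d\ge5$, the branch loci have dimension $d-2\ge3$, so Mostow rigidity applies to them and $B\cong B'$, a contradiction.

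\textbf{Step 1: the Sylow argument.} The deck group $\mathbb{Z}/k$ of $M_k\to M$ acts by isometries of the negatively curved metric. It acts effectively and hence injects into ${\rm Out}(\pi_1(M_k))$: a nontrivial isometry of a closed negatively curved manifold inducing an inner automorphism would be homotopic to the identity, which is impossible. By Theorem \ref{thmA:Out}, $|{\rm Out}(\pi_1(M_k))|$ divides $I^kk$.

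Take $I$ larger than the constants of Theorem \ref{thmA:Out} for both pairs, and assume every prime $p\mid k$ satisfies $p>I$. For such $p$, the $p$-part of $I^kk$ equals the $p$-part of $k$. Hence the subgroup $\mathbb{Z}/k$ is a Hall subgroup of ${\rm Out}$ for the set of primes dividing $k$, and in particular each of its Sylow pieces is a full Sylow subgroup.

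On the $M'$ side, the same group divides $I^{k'}k'$. So the $p$-part of $k$ divides the $p$-part of $k'$, and $\mathbb{Z}/k$ sits, up to conjugacy, inside the deck group $\mathbb{Z}/k'$ of $M'_{k'}$. This uses Sylow's theorem prime by prime: the $p$-part of $\mathbb{Z}/k'$ is a $p$-group, hence lies in a conjugate of the Sylow $p$-subgroup $\mathbb{Z}/k_p$, and by order count it equals that conjugate. Since $\mathbb{Z}/k'$ is cyclic, its subgroup of order $k$ is unique, so it coincides with a conjugate of our $\mathbb{Z}/k$. Conjugating, we may assume that our $\mathbb{Z}/k$ is exactly a subgroup of the deck group $\mathbb{Z}/k'$ acting on $\pi_1(M'_{k'})$.

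\textbf{Step 2: recovering $\pi_1(B)$ algebraically.} Choose a prime $p\mid k$ and let $\phi$ be an element of order $p$ in this common subgroup. Geometrically, on $M_k$ it is realized by the deck transformation $\tau^{k/p}$, whose fixed set is exactly the lifted branch locus $\tilde B\cong B$. The same element is realized on $M'_{k'}$ by a deck power with fixed set $B'$.

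We then need an algebraic characterization of the fixed set that survives the isomorphism. The proposed one is the following: the subgroups $H\le\pi_1$ that are maximal among those with $H$ fixed pointwise by some lift of $\phi$ to ${\rm Aut}(\pi_1)$ are exactly the conjugates of $\pi_1(B)$. To justify this:
\begin{itemize}
\item A lift of $\phi$ is realized by a lift of the isometry to the universal cover.
\item The subgroup it fixes consists of deck elements commuting with that lift.
\item By negative curvature, such a subgroup preserves the fixed set of the lift in the universal cover, and that fixed set is a lift of a component of $B$.
\end{itemize}
Transporting through the isomorphism gives $\pi_1(B)\cong\pi_1(B')$. As $B,B'$ are closed hyperbolic of dimension $\ge3$, Mostow rigidity makes them isometric, contradicting the hypothesis.

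\textbf{Main obstacle.} I expect the hardest part to be Step 2, namely making the fixed-subgroup characterization rigorous for the negatively curved (not hyperbolic) metric on $M_k$. Two points need care:
\begin{itemize}
\item Lifts of $\phi$ differ by inner automorphisms, so one must argue via centralizers of elements of ${\rm Aut}$, or via fixed points at infinity.
\item The fixed subgroup of a finite-order automorphism of a hyperbolic group must be shown to be exactly the stabilizer of the fixed geodesic submanifold. The tool I would reach for first is the Cartan fixed point theorem combined with the convexity of fixed sets of isometries.
\end{itemize}
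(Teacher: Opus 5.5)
\textbf{Gap in Step 2.} Your characterization, that the subgroups maximal among fixed subgroups of lifts of $\phi$ are exactly the conjugates of $\pi_1(B)$, is false as stated.

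A lift $F$ of $\phi$ to $\tilde{M}_k$ need not have a fixed point, and Cartan's theorem only applies when $F^p=1$. If instead $F^p=g_0\neq 1$, then ${\rm Fix}(\alpha_F)$ lies in the cyclic centraliser of $g_0$. It can still be non-trivial and generated by a loxodromic element. For instance, take $\phi=\rho_k^{k/p}$ in the paper's notation, and any $x\in\gamma^{k/p}\pi_1(M_k)\subset Q_k$ that acts loxodromically on $X_k$. By Lemma \ref{lem:rotation is conjugation}, conjugation by $x$ restricted to $\pi_1(M_k)$ is an automorphism in the class $\phi$, and it fixes $x^p\neq 1$. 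Geometrically, $\phi$ preserves a closed geodesic and rotates it by a fraction of its length.

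Every fixed subgroup of a lift is either cyclic (infinite-order lifts) or a conjugate of $\pi_1(B)$ (order-$p$ lifts), and the latter contain no loxodromics. So the maximal fixed subgroup containing $x^p$ is cyclic and not conjugate into $\pi_1(B)$. Maximality therefore does not single out $\pi_1(B)$, and the transport step, as you formulated it, does not go through.

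\textbf{How to repair it.} Use finite order in ${\rm Aut}$ instead of maximality.

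Since $\pi_1(M_k)$ has trivial centre, $F\mapsto\alpha_F$ is a bijection from lifts of $\phi$ to automorphisms in the class $\phi$, and $\alpha_F^p={\rm id}$ if and only if $F^p=1$. For such $F$:
\begin{enumerate}
\item Cartan gives a fixed point.
\item ${\rm Fix}(F)$ is convex and contained in the preimage of ${\rm Fix}(\phi)=B_k$, so it lies in a single component $\tilde B$.
\item It equals $\tilde B$, because $F|_{\tilde B}$ lifts ${\rm id}_{B_k}$ and has a fixed point.
\item Hence ${\rm Fix}(\alpha_F)={\rm Stab}(\tilde B)$. For the inclusion $\supseteq$, note that $FhF^{-1}$ and $h$ are deck transformations agreeing on $\tilde B$.
\end{enumerate}

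Now let $\rho=\rho_k^{k/p}$, based at $x_k\in B_k$. This is an order-$p$ automorphism with fixed subgroup $\pi_1(B_k)$. Let $\psi$ be the isomorphism, and suppose $\psi_{\rm out}(\bar\rho)=\bar\theta\bar\sigma'\bar\theta^{-1}$, where $\bar\sigma'$ is induced by an order-$p$ deck power on $M'_{k'}$. Then $\theta^{-1}\psi\rho\psi^{-1}\theta$ is an order-$p$ automorphism in the class $\bar\sigma'$. Its fixed subgroup $\theta^{-1}\psi(\pi_1(B_k))$ is therefore a conjugate of $\pi_1(B'_{k'})$.

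Equivalently, the conjugates of $\pi_1(B)$ are exactly the non-cyclic fixed subgroups of lifts, since $\dim B\ge 3$. This is precisely what the paper's steps $(\delta g)^k=1$ and Lemma \ref{lem:filling_finite_order} accomplish.

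\textbf{Step 1.} Prime-by-prime Sylow does not produce a single element conjugating all of $\mathbb{Z}/k$ into $\mathbb{Z}/k'$, since different primes may need different conjugators. The paper obtains simultaneous conjugacy from Wielandt's theorem on nilpotent Hall subgroups. You do not need it, so drop that claim. You only use an element of order $p$. The Sylow $p$-subgroups of ${\rm Out}$ are cyclic of order $k_p$, so any two subgroups of order $p$ are conjugate.

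\textbf{Comparison.} With these corrections your proof is a valid route that genuinely differs from the paper's. The paper works algebraically in the Dehn filling $Q'_{k'}$, using Lemmas \ref{lem:centre} and \ref{lem:filling_finite_order} (valid for large $k,k'$) together with Corollary \ref{cor:order_k}. You instead use ${\rm CAT}(-1)$ geometry of the universal cover (Cartan and convexity of fixed sets), plus the fact that a non-trivial isometry is not homotopic to the identity. Your route avoids Wielandt and the large-$k$ filling lemmas, and in fact only needs one prime factor $p>I$ of $k$. The heavy input, Theorem \ref{thmA:Out}, is the same in both.
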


\medskip

All our results rely on the key fact that fundamental groups of Gromov-Thurston manifolds for the GT-pair $(M,B)$ are virtual Dehn fillings of $\pi_1(M-B)$ plus the fact that $\pi_1(M-B)$ is relatively hyperbolic (with respect to natural peripheral subgroups) by a result of Belegradek \cite{Bel}. A simplified version is the following:

\begin{propA}[\cite{Bel} and Lemma \ref{lem:van kampen}]
Let $(M,B)$ be a {\rm GT-pair}, and let $M_k={\rm GT}(M,B,k)$. Denote by $U\subset M$ a tubular neighborhood of $B$. Then \begin{itemize}
    \item{$\pi_1(M-B)$ is hyperbolic relative to $\pi_1(U-B)$.}
    \item{$\pi_1(M_k)$ is isomorphic to an index-$k$ subgroup of $\pi_1(M-B)/\langle\langle \gamma^k\rangle\rangle$ where $\gamma\in\pi_1(U-B)$ is the meridian.}
\end{itemize}
\end{propA}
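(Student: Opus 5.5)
The first bullet is quoted from Belegradek \cite{Bel}, so I will only sketch why it holds and then prove the second bullet with a van Kampen argument. Belegradek's result says that the complement of a totally geodesic codimension-2 submanifold in a hyperbolic manifold carries a complete finite-volume metric of pinched negative curvature. Near $B$ this metric looks like a cusp whose cross-section is the unit normal circle bundle $\partial U\simeq U-B$. Pinched negative curvature with finitely many such cusps gives relative hyperbolicity of $\pi_1(M-B)$ with respect to the cusp group $\pi_1(U-B)$, exactly as in the finite-volume case of Farb and Bowditch. I will take this as given.

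For the second bullet I first set up the branched cover. Let $p\colon M_k\to M$ be the $k$-fold cyclic cover branched over $B$. It is constructed from the homomorphism $\phi\colon\pi_1(M-B)\to\mb{Z}/k$ that counts algebraic intersection with $W$ modulo $k$; this is well defined because $W$ is orientable with $\partial W=B$. The meridian $\gamma$ of $B$ meets $W$ exactly once, so $\phi(\gamma)=1$. Write $\tilde B=p^{-1}(B)\cong B$ and $\tilde U=p^{-1}(U)$. Then $M_k-\tilde B$ is the honest unbranched cover of $M-B$ associated to $K=\ker\phi$, so $\pi_1(M_k-\tilde B)=K$, which has index $k$. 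Moreover, $\tilde U$ is a $D^2$-bundle over $\tilde B$ in which the meridian disk is the $k$-fold cover of a meridian disk of $U$. So the meridian of $\tilde U$ is represented by $\gamma^k$, which lies in $K$ since $\phi(\gamma^k)=0$.

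Next I apply van Kampen to $M_k=(M_k-\tilde B)\cup\tilde U$, where the two pieces overlap in $\tilde U-\tilde B$, a circle bundle over $\tilde B$. Gluing in the disk bundle kills exactly the normal closure of the fiber class. Since $\tilde U$ deformation retracts to $\tilde B$, the map $\pi_1(\tilde U-\tilde B)\to\pi_1(\tilde U)$ is surjective with kernel normally generated by the fiber $\gamma^k$. Therefore
\[
\pi_1(M_k)\cong K/\langle\langle \gamma^k\rangle\rangle_K ,
\]
where the normal closure is taken in $K$. The same count applies to each of the basepoint conjugates of the meridian.

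It remains to identify this quotient with an index-$k$ subgroup of $G_k=\pi_1(M-B)/\langle\langle\gamma^k\rangle\rangle$, where now the normal closure is taken in $\pi_1(M-B)$. Since $\langle\langle\gamma^k\rangle\rangle\subset K$, the homomorphism $\phi$ descends to $G_k$, and the image $\bar K$ of $K$ is the kernel of the descended map, so it has index $k$ in $G_k$. Also $\bar K=K/\langle\langle\gamma^k\rangle\rangle_{\pi_1(M-B)}$. The key step is to check that the normal closure of $\gamma^k$ in $\pi_1(M-B)$ equals the normal closure in $K$ of the elements $\gamma^{j}\gamma^k\gamma^{-j}$ for $0\le j<k$. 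This holds because $\pi_1(M-B)=\bigsqcup_j K\gamma^j$. But $\gamma^j$ commutes with $\gamma^k$, so all of these elements equal $\gamma^k$ itself, and the two normal closures coincide. Geometrically, this reflects the fact that $\tilde B$ is connected, so a single meridian is killed upstairs.

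I expect this last step, matching the normal closures, to be the only delicate point. The rest is standard van Kampen bookkeeping together with the structure of the branched cover near $\tilde B$.
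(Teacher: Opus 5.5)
Your proof of the second bullet is correct and follows the paper's route: van Kampen for $M_k=(M_k-B_k)\cup U_k$, identification of $\pi_1(M_k-B_k)$ with an index-$k$ normal subgroup $K\ni\gamma^k$ of $\pi_1(M-B)$, then comparison of normal closures; your coset check that $\langle\langle\gamma^k\rangle\rangle_{\pi_1(M-B)}=\langle\langle\gamma^k\rangle\rangle_K$, using that $\gamma$ generates $\pi_1(M-B)/K$ and commutes with $\gamma^k$, actually supplies the justification that the paper's injectivity step only asserts. For the first bullet you, like the paper, just cite Belegradek, but your gloss is wrong for $d\ge 4$: $M-B$ admits no complete finite-volume pinched negatively curved metric with $U-B$ as its cusp, because cusp groups of such manifolds are virtually nilpotent while $\pi_1(U-B)\cong\pi_1(B)\times\mathbb{Z}$ is not, so the relative hyperbolicity does not come from the Farb--Bowditch finite-volume setting (this does not affect the rest of your argument).
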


The proposition allows us to use various tools from geometric group theory, in particular limiting actions on trees; more details below.

Having stated our main results, we conclude the introduction by first reviewing the Euler characteristic and volume methods and then giving the outline of our approach together with the organisation of the article.

\subsection*{Euler characteristic and volumes}
Let us discuss first the Euler characteristic.

A computation shows that the Euler characteristic of $M_k={\rm GT}(M,B,k)$ is given by 
\[
\chi(M_k)=k(\chi(M)-\chi(B))+\chi(B).
\]
While this is trivial in odd dimension (zero-equals-zero), it is very useful in even dimension when combined with the Chern-Gauss-Bonnet Theorem which says that ${\rm vol}(M)$ and ${\rm vol}(B)$ are universal positive multiples of $(-1)^{d/2}\chi(M)$ and $(-1)^{d/2-1}\chi(B)$. In particular, as the volume is always positive, it implies that $\chi(M)-\chi(B)\neq 0$ and, hence, that $\chi(M_k)$ grows linearly in $k$. From this, we can conclude that, in even dimensions, if $k\neq k'$ then $M_k$ cannot be homotopy equivalent to $M_k'$.

In a similar fashion, in an arbitrary dimension, one could try to replace the Euler characteristic with the simplicial volume $||M_k||$ (we refer to \cite[Chapter 7]{Frigerio} for the definition of this topological invariant). This approach goes back to the original work of Gromov and Thurston \cite{GT}. The major challenge is to determine whether $k\to ||M_k||$ is a strictly increasing function. While this is open as far as we can tell, it is possible to relax the requirement and prove that this function has suitable monotonicity properties. This is what we discuss next.

In the spirit of \cite{GT}, one can argue as follows. Fix $\ep\in(0,1)$. Provided that the normal injectivity radius of $B$ in $M$ is large enough, every $M_k$ can be equipped with a Riemannian metric $g_k$ of $(-1-\ep,-1+\ep)$-pinched negative curvature. Furthermore, the volume of such metric satisfies ${\rm vol}(M_k,g_k)/k\cdot{\rm vol}(M)\in(1/c_1,c_1)$ for some uniform constant $c_1>1$ only depending on $M$. By work of Besson, Courtois, and Gallot \cite{BCG} for every $\ep\in(0,1)$ there exists $c_2=c_2(d,\ep)>0$ such that if $X$ and $Y$ are homotopy equivalent closed Riemannian $d$-manifolds with pinched negative sectional curvature contained in the interval $(-1-\ep,-1+\ep)$ then ${\rm vol}(X)/{\rm vol}(Y)\in(1/c_2,c_2)$. Thus, in our setup, if $M_k$ is homotopy equivalent to $M_{k'}$ then we must have $1/c_1c_2\le k/k'\le c_1c_2$. This provides a coarse way to distinguish Gromov-Thurston manifolds. Note that, thanks to the Proportionality Principle \cite{Gromov}, in pinched negative curvature the ratio between the Riemannian volume and the simplicial volume is uniformly bounded from above and below. 

In the last part of the introduction, we describe the organisation of the article and introduce the main ingredients of our strategy.

\subsection*{Overview and outline}

Section \ref{sec:geom_branch} contains various facts known to experts about Gromov-Thurston manifolds that can be deduced using their ${\rm CAT}(-1)$-geometry.

In Section \ref{sec:vir_Dehn_filling} we describe fundamental groups of Gromov-Thurston manifolds for a GT-pair $(M,B)$ as virtual Dehn fillings of $\pi_1(M-B)$ (see Lemma \ref{lem:van kampen} and Lemma \ref{lem:rotation is conjugation} for precise statements). This is crucial to our approach, and ultimately it is an application of van Kampen's theorem. The group $\pi_1(M-B)$ is relatively hyperbolic by \cite{Bel}. In Section \ref{sec:rel hyp}, we describe some important consequences of this fact in relation to virtual Dehn fillings. 

Section \ref{sec:actions_on_trees} is the core of this paper. The main result is Theorem \ref{thm:displ}, which is inspired by \cite{DG18}. The theorem allows us to bound the number of injective homomorphisms from hyperbolic groups (with suitable non-splitting properties) into the fundamental groups of Gromov-Thurston manifolds. The strategy of the proof is to proceed by contradiction, starting with an unbounded (in a suitable sense) sequence of homomorphisms and extracting a limit action on an $\mathbb R$-tree (using Bestvina and Paulin's technology, see \cite{Paulin}). We then have to study the arc stabilisers for this action, and these can be more complicated than in \cite{DG18}. This creates additional obstacles to the application of Rips theory that we have to address by performing additional constructions. This is also why we need to assume that the hyperbolic groups that we consider have stronger non-splitting properties than in \cite{DG18}. We find three different possible scenarios, and for each of them we use a different method to construct an action on a suitable tree. The three methods we use are: The theory of tree-graded spaces \cite{DS}, combinatorial considerations involving the arrangement of limits of horoballs, and taking a limit of coned-off graphs. The non-splitting assumption will rule out all these actions and produce the desired contradiction.

In Section \ref{sec:no_split} we show that Theorem \ref{thm:displ} applies to fundamental groups of complements of walls, by showing that they do not have certain types of splittings. This involves a mix of homological and geometric arguments.

In Section \ref{sec:proof main} we study outer automorphism groups, relying on Theorem \ref{thm:displ}, and complete the proofs of Theorems \ref{thmA:main}, \ref{thmA:Out}, and \ref{thmA:Sylow}.

\subsection*{Acknowledgements}
We warmly thank Ursula Hamenstädt for useful discussions. We also thank Giorgio Mangioni, Jason Manning, and Bruno Premoselli for their feedback on previous drafts of this article.

\section{Geometry and topology of the branched cover}
\label{sec:geom_branch}

Let $M$ be a closed orientable hyperbolic $d$-manifold containing a connected totally geodesic embedded codimension 2 submanifold $B\subset M$. Suppose further that $M$ is the boundary $B=\partial W$ of a codimension 1 totally geodesic orientable submanifold $W\subset M$.

\begin{dfn}[Wall, Sector, and Singularity]
In the setting above, we call $W$ the {\em wall}, $S:=M-W$ the (incomplete) {\em sector}, and $B$ the {\em singularity}.    
\end{dfn}

\begin{dfn}[Complete Sector]
\label{dfn:sector}
The abstract completion $\bar{S}$ of the hyperbolic manifold $S:=M-W$ with respect to the intrinsic path metric is a compact concave $n$-manifold with totally geodesic boundary pleated along the singularity (we call it the {\em complete sector}). More precisely, $\partial\bar{S}$ consists of two copies $W^+,W^-$ of $W$ glued together along their common boundary $B$ where they form an angle of $2\pi$. Endowed with the intrinsic path metric, the boundary $\partial\bar{S}$ is isometric to the double $DW$ of the wall $W$ along $\partial W=B$.    
\end{dfn}

Associated with $M,W,B$, there is a canonical degree $k$ branched cover $M_k\to M$.

\begin{dfn}[Branched Cover]
\label{dfn:branched cover}
Consider $k$ copies $\bar{S}_1,\cdots,\bar{S}_k$ (indexed by $j\in\mb{Z}/k\mb{Z}$) of a complete sector $\bar{S}$. The {\em degree $k$ branched cover $M_k$ associated with $M,W,B$} is
\[
M_k:=\bigsqcup_{j\in\mb{Z}/k\mb{Z}}{\bar{S}_j}\left/\,\{W_j^+=W_{j+1}^-:j\in\mb{Z}/k\mb{Z}\}\right.
\]
where we are gluing the copies of the complete sector along the canonical cyclic identifications of their boundaries. In particular, note that the intersection of all $\bar{S}_j\subset M_k$ is a copy of $B$, which we denote by $B_k\subset M_k$. The manifold $M_k$ comes equipped with 
\begin{itemize}
    \item{A projection $p_k:M_k\to M$ mapping each $S_j\subset\bar{S}_j$ isometrically to $S=M-W$. The map $p_k$ is a degree $k$ branched covering branched over $B\subset M$.}
    \item{An order-$k$ diffeomorphism $\rho_k:M_k\to M_k$ that fixes $B_k$ and maps $\bar{S}_j$ to $\bar{S}_{j+1}$ isometrically for each $j\in\mb{Z}/k\mb{Z}$. We call $\rho_k$ the {\em canonical rotation} of $M_k$.}
\end{itemize}
\end{dfn}

The goal of this section is to exploit the path metric induced by the natural hyperbolic cone structure on $M_k$ to get useful geometric information about the fundamental groups of $W,S,M_k$ and how they sit one inside the other. 

\subsection{Geometry}
First of all, let us recall that the standard $d$-dimensional hyperbolic cone of angle $\theta$ is the singular Riemannian $d$-manifold 
\[
\mb{H}^d(\theta)=[0,\infty)\times\mb{S}^1_\theta\times\mb{H}^{d-2}/\{(0,\phi,x)\sim(0,\phi',x):x\in\mb{H}^{d-2},\phi,\phi'\in\mb{S}^1_\theta\},
\]
where $\mb{S}^1_\theta=\mb{R}/\theta\mb{Z}$, equipped with the singular Riemannian metric 
\[
{\rm d}r^2+\sinh(r)^2{\rm d}\theta^2+\cosh(r)^2{\rm d}\sigma_{\mb{H}^{d-2}}^2
\]
where $\sigma_{\mb{H}^{d-2}}$ is the hyperbolic metric on $\mb{H}^{d-2}$. We endow $\mb{H}^d(\theta)$ with the natural path metric induced by the singular Riemannian metric. The following is a well-known fact.

\begin{pro}
\label{pro:cat-1cone}
Assume $\theta\ge2\pi$. Then $\mb{H}^d(\theta)$ is ${\rm CAT}(-1)$.
\end{pro}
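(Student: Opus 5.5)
The plan is to reduce to the two-dimensional case via the warped product structure, and then use Alexandrov/Reshetnyak gluing together with the Cartan--Hadamard theorem for locally ${\rm CAT}(-1)$ spaces.

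\emph{Step 1 (completeness and local model away from the singular locus).} Away from the singular locus $\{r=0\}$, the metric ${\rm d}r^2+\sinh(r)^2{\rm d}\phi^2+\cosh(r)^2{\rm d}\sigma_{\mb{H}^{d-2}}^2$ is exactly the hyperbolic metric written in Fermi-type coordinates around a totally geodesic $\mb{H}^{d-2}\subset\mb{H}^d$. The only difference is that the angular coordinate now runs over $\mb{R}/\theta\mb{Z}$ instead of $\mb{R}/2\pi\mb{Z}$. Hence points with $r>0$ have neighbourhoods isometric to open subsets of $\mb{H}^d$, and so these regions are locally ${\rm CAT}(-1)$. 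The space is a complete geodesic length space: $\mb{H}^{d-2}$ is complete, and the cone is obtained by gluing finitely many wedges along closed convex sets.

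\emph{Step 2 (gluing description).} Write $\theta=\theta_1+\dots+\theta_m$ with each $\theta_i\le\pi$ (possible for $\theta\ge 2\pi$, taking $m\ge 2$). Each piece $[0,\infty)\times[0,\theta_i]\times\mb{H}^{d-2}$ is isometric to a convex wedge in $\mb{H}^d$ between two totally geodesic half-hyperplanes meeting along $\mb{H}^{d-2}$ at angle $\theta_i\le\pi$. Such a wedge is convex, hence ${\rm CAT}(-1)$. We then glue the wedges cyclically along their boundary half-hyperplanes, which are complete convex subspaces. Reshetnyak's gluing theorem handles gluing two ${\rm CAT}(-1)$ spaces along a complete convex subset, so the linear chain of gluings $\theta_1,\dots,\theta_m$ gives a ${\rm CAT}(-1)$ space $X$.

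The final gluing closes up the cycle and is not a gluing along a single convex set. Here we instead argue that the space is locally ${\rm CAT}(-1)$ and simply connected.
\begin{itemize}
\item \emph{Simple connectivity.} $\mb{H}^d(\theta)$ deformation retracts onto the singular locus $\{0\}\times\mb{H}^{d-2}$ by shrinking $r$, so it is contractible.
\item \emph{Local ${\rm CAT}(-1)$ near $r>0$.} This follows from Step 1.
\item \emph{Local ${\rm CAT}(-1)$ near $r=0$.} A small ball around a singular point is a union of wedge pieces glued cyclically. Its link is the spherical join of the circle $\mb{S}^1_\theta$ of length $\theta$ with $\mb{S}^{d-3}$. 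By Berestovskii's theorem, the cone is locally ${\rm CAT}(\kappa)$ near that point iff the link is ${\rm CAT}(1)$. A spherical join of ${\rm CAT}(1)$ spaces is ${\rm CAT}(1)$. The circle of length $\theta$ is ${\rm CAT}(1)$ exactly when $\theta\ge 2\pi$, since there are then no closed geodesics of length $<2\pi$.
\end{itemize}
Since the local model at $r=0$ is a hyperbolic cone rather than a Euclidean one, I would justify the link criterion via the warped-product/polyhedral version of Berestovskii's theorem (as in Bridson--Haefliger, for $M_\kappa$-polyhedral complexes with finitely many shapes). Each wedge can be subdivided into hyperbolic simplices locally, so $\mb{H}^d(\theta)$ is an $M_{-1}$-polyhedral complex with finitely many shapes near any compact set. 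By Gromov's link condition it is therefore locally ${\rm CAT}(-1)$ iff all vertex links are ${\rm CAT}(1)$. These links are either round spheres (at nonsingular points) or iterated suspensions of $\mb{S}^1_\theta$, which are ${\rm CAT}(1)$ for $\theta\ge2\pi$.

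\emph{Step 3 (Cartan--Hadamard).} A complete, simply connected, locally ${\rm CAT}(-1)$ geodesic space is ${\rm CAT}(-1)$ by the Cartan--Hadamard theorem for metric spaces. This concludes the proof.

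The main obstacle is Step 2's local analysis at singular points: verifying that the link is the join $\mb{S}^1_\theta*\mb{S}^{d-3}$ and applying the link condition rigorously, given that the complex is infinite. I would handle this by working in a compact neighbourhood, where only finitely many shapes occur, which suffices for a local statement.
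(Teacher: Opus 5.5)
The paper gives no proof of this proposition; it quotes it as a well-known fact, so there is no argument of the authors to compare with. Your proof is correct and is the standard local-to-global argument:
\begin{itemize}
\item completeness and contractibility of $\mathbb{H}^d(\theta)$;
\item local isometry to $\mathbb{H}^d$ away from the singular locus;
\item at singular points, a link $\mathbb{S}^1_\theta*\mathbb{S}^{d-3}$ that is ${\rm CAT}(1)$, because a circle of length $\theta\ge 2\pi$ is ${\rm CAT}(1)$ and spherical joins of ${\rm CAT}(1)$ spaces are ${\rm CAT}(1)$;
\item Cartan--Hadamard to conclude.
\end{itemize}

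Some of your write-up can be trimmed or simplified.
\begin{itemize}
\item The warped-product reduction to dimension two announced in your first sentence is never carried out. The Reshetnyak chain producing the cut-open ${\rm CAT}(-1)$ wedge is never used afterwards. Both can be dropped.
\item Your worry that the local model is a hyperbolic rather than a Euclidean cone is unfounded. Berestovskii's theorem (Bridson--Haefliger, Chapter II.3) is stated for $\kappa$-cones with arbitrary $\kappa$, so the detour through $M_{-1}$-polyhedral structures is unnecessary.
\item If you do keep the polyhedral route, the compatibility of cells across glued faces needs a sentence. For example, take all $\theta_i$ equal and use the cells $P\cap W_i$, where $P\ni p$ is a convex polytope invariant under the reflection of the model wedge in its bisecting hyperplane.
\end{itemize}

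There is also a shorter global route. With apex any singular point, $\mathbb{H}^d(\theta)$ is globally isometric to the $(-1)$-cone over $\mathbb{S}^1_\theta*\mathbb{S}^{d-3}$. Indeed, it is the gluing of the cones over the lunes $I_i*\mathbb{S}^{d-3}$ with $|I_i|\le\pi$, and these cones are exactly your wedges. Berestovskii's theorem plus the join lemma then give ${\rm CAT}(-1)$ directly, with no need for simple connectivity or Cartan--Hadamard. The price is checking that the path metric agrees with the cone metric globally.

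Your local-to-global version has its own advantage: it is exactly the form in which the paper uses the proposition. Corollary \ref{cor:gromov hyperbolic} needs $\bar S$ and $M_k$, which are only locally modelled on $\mathbb{H}^d(2k\pi)$, to be locally ${\rm CAT}(-1)$, and then Cartan--Hadamard applies to their universal covers.
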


We also want to consider sectors inside $\mb{H}^d(\theta)$. Denote by $\phi:\mb{H}^d(\theta)-\mb{H}^{d-2}\to\mb{S}^1_\theta$ the projection to the circle. For every $z\in\mb{H}^d(\theta)-\mb{H}^{d-2}$ define $H_z\subset\mb{H}^d(\theta)$ to be the half-hyperplane bounding the singularity $\mb{H}^{d-2}$ and containing $z$, in other words, 
\[
H_z=\{w\in\mb{H}^{d-2}(\theta):\phi(w)=\phi(z)\}\cup\mb{H}^{d-2}.
\]
Two (ordered) half-hyperplanes $H_x,H_y$ as above bound a sector 
\[
V_{x,y}=\{w\in\mb{H}^d(\theta)-\mb{H}^{d-2}:\phi(x)\le\phi(w)\le\phi(y)\}\cup\mb{H}^{d-2}
\]
where we orient $\mb{S}^1_\theta$ counterclockwise and we denote by $\phi\le\psi\le\phi'$ the arc between $\phi,\phi'$ according to this orientation. Note that the isometry type of $V$ only depends on the angle $\phi(y)-\phi(x)$. We denote by $V(\psi)$ the sector with angle $\psi\in[0,\theta)$. The following is also a well-known fact.

\begin{lem}
\label{lem: tot geo}    
Assume $\theta>2\pi$. Let $x,y\in\mb{H}^d(\theta)-\mb{H}^{d-2}$ be non-singular points. 
\begin{enumerate}
    \item{If the two angles formed by $H_x,H_y$ at $\mb{H}^{d-2}$ are at least $\pi$ then inclusion $H_x\cup H_y\to\mb{H}^d(\theta)$ is isometric.}
    \item{If the angle of $V_{x,y}$ is at most $\theta-\pi$ then the sector $V_{x,y}$ is convex. In particular, the restriction of the path metric to $V_{x,y}$ is ${\rm CAT}(-1)$}.
\end{enumerate}
\end{lem}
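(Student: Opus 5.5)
The approach is to analyse geodesics in $\mb{H}^d(\theta)$ through explicit coordinates, using that $\mb{H}^d(\theta)$ is ${\rm CAT}(-1)$ (Proposition \ref{pro:cat-1cone}), so geodesics are unique. The key structural fact is the description of geodesics between two points $x,y$: let $\alpha\in[0,\theta/2]$ be the angular distance between $\phi(x)$ and $\phi(y)$ in $\mb{S}^1_\theta$, measured the short way. If $\alpha<\pi$, the union of the two half-hyperplanes $H_x\cup H_y$, together with the thin sector $V$ of angle $\alpha$ between them, is isometric to a wedge in $\mb{H}^d$ with angle $\alpha<\pi$. In that case the geodesic is the hyperbolic one inside that wedge, and it avoids the singularity. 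If $\alpha\ge\pi$ (in both directions), the geodesic passes through $\mb{H}^{d-2}$. Concretely, $d(x,y)$ equals the minimum over $b\in\mb{H}^{d-2}$ of $d(x,b)+d(b,y)$, and the geodesic is a broken segment $x\to b\to y$ with $b$ the point realising the minimum. To justify this, I would show that any path from $x$ to $y$ can be modified as follows. Either it crosses the singular locus, in which case the broken path through the singular locus is no longer. Or it stays in the nonsingular part, where the metric is locally hyperbolic; then developing the path into $\mb{H}^d$ along the angular coordinate gives a path between points at angle $\ge\pi$ in a hyperbolic wedge of angle $\ge\pi$. Such a path is no shorter than the broken path through the edge, since in $\mb{H}^d$ the straight segment between points in half-planes at angle exactly $\pi$ meets the common boundary.

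For (1), take $u\in H_x$ and $v\in H_y$. The intrinsic metric on $H_x\cup H_y$ is that of two hyperbolic half-spaces of dimension $d-1$ glued along $\mb{H}^{d-2}$ at angle $\pi$, i.e.\ a copy of $\mb{H}^{d-1}$. Since both angles between $H_x$ and $H_y$ are at least $\pi$, the description above shows that the ambient distance $d(u,v)$ is realised by a broken path through $\mb{H}^{d-2}$. This broken path lies in $H_x\cup H_y$, so the two distances agree. Points lying in the same half-hyperplane are handled by observing that each $H_x$ is isometric to a hyperbolic half-space, and a sector of angle $\pi$ containing it is isometric to a hyperbolic half-space, hence convex.

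For (2), take $u,v\in V_{x,y}$, so that $\psi:=\phi(y)-\phi(x)\le\theta-\pi$. If the angle from $u$ to $v$ inside $V_{x,y}$ is $<\pi$, the geodesic is the hyperbolic segment inside the wedge spanned by $H_u,H_v$ on that side, and that wedge is contained in $V_{x,y}$. If the angle is $\ge\pi$, the complementary angle is at least $\theta-\psi\ge\pi$. So both angles are $\ge\pi$, and the geodesic goes through $\mb{H}^{d-2}\subset V_{x,y}$. In both cases the geodesic stays in $V_{x,y}$, so $V_{x,y}$ is convex. A convex subset of a ${\rm CAT}(-1)$ space is ${\rm CAT}(-1)$ with the restricted metric, which is the second claim.

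The main obstacle is making the geodesic description rigorous, namely the developing argument for paths avoiding the singular set when the angular displacement is $\ge\pi$. My first attempt would be a projection trick: the map $\mb{H}^d(\theta)\to\mb{H}^d(2\pi)=\mb{H}^d$ that folds angles appropriately is $1$-Lipschitz. Then the lower bound $d(u,v)\ge\inf_b\, d(u,b)+d(b,v)$ follows from the corresponding identity in $\mb{H}^d$ for points at angle exactly $\pi$, which lie on a totally geodesic hyperplane through $\mb{H}^{d-2}$.
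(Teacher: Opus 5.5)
Your argument is correct, but it follows a different route from the paper's.

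\textbf{The paper's route.} The paper argues locally. For a nonsingular $z$ it takes the open region $O_z$ of total angle $2\pi$ centred on $H_z$. This region develops isometrically onto $\mathbb{H}^d$ minus a half-hyperplane $H\subset U$ with $\phi(z)\in U-H$. A first-exit argument then shows that any geodesic from $z$ leaving $O_z$ first runs inside $H_z$ until it reaches $\mathbb{H}^{d-2}$. Applying this at both ends of a geodesic from $x$ to $y$ handles the ends. The strictly $1$-Lipschitz retraction onto $\mathbb{H}^{d-2}$ handles the middle piece, which gives (1). Finally, (2) is deduced from (1) via ``convex boundary implies convex sector''.

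\textbf{Your route.} You use a single global comparison map $F(r,\phi,p)=(r,f(\phi),p)$ into $\mathbb{H}^d$, where $f\colon\mathbb{S}^1_\theta\to\mathbb{S}^1_{2\pi}$ is $1$-Lipschitz with $f(\phi(u))=0$ and $f(\phi(v))=\pi$. Such an $f$ exists precisely because both arcs between $\phi(u)$ and $\phi(v)$ have length at least $\pi$; for instance $f=\min\{{\rm dist}(\cdot,\phi(u)),\pi\}$. Then $F$ is $1$-Lipschitz and isometric on $H_u$ and $H_v$, which gives $d(u,v)=\min_b\,(d(u,b)+d(b,v))$ at once. This needs no first-exit analysis and no retraction onto the singular locus.

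\textbf{What each buys.} Your case split in (2) also covers sectors of angle $<\pi$. The paper's reduction to (1) does not literally cover that case, since then $H_x,H_y$ form an angle $<\pi$. The price is that your convexity argument leans on Proposition \ref{pro:cat-1cone}: you need uniqueness of geodesics, and for the wedge case $\alpha<\pi$ you need that local geodesics are geodesics. You should state the latter explicitly; the wedge segment avoids $\mathbb{H}^{d-2}$, so it is a local geodesic. By contrast, the paper's argument shows directly that every geodesic from $H_x$ to $H_y$ stays in $H_x\cup H_y$.

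\textbf{Two small repairs.}
\begin{enumerate}
\item Drop the ``developing'' sentence and let $F$ carry the argument. Development only sees angles modulo $2\pi$: a path winding through angle $3\pi/2$ develops to endpoints at angle $\pi/2$, and the straight segment between those is shorter than the broken path.
\item For two points of the same $H_x$, appealing to convexity of a sector of angle $\pi$ is circular, since that is an instance of (2). Instead, note that the retraction $(r,\phi,p)\mapsto(r,\phi(x),p)$ onto $H_x$ is $1$-Lipschitz, which settles that case.
\end{enumerate}
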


We give a proof sketch.

\begin{proof}
Note that Property (2) easily follows from Property (1), which guarantees that the boundary of the sector is convex.

Property (1) follows from an explicit description of the geodesics in $\mb{H}^d(\theta)$. 

Let us start with a preliminary observation. Each geodesic of $\mb{H}^d(\theta)$ contained in $\mb{H}^d(\theta)-\mb{H}^{d-2}$ coincides with a Riemannian geodesic for the Riemannian metric on $\mb{H}^d(\theta)-\mb{H}^{d-2}$ because it is length minimising.

Consider a non-singular point $z\in \mb{H}^d(\theta)-\mb{H}^{d-2}$. Define $O_z$ to be the open region containing $z$ and bounded by the two half-hyperplanes $H_z',H_z''$ forming an angle of $\pi$ with $H_z$ at the singularity $\mb{H}^{d-2}$. 

Note that $O_z$ equipped with the intrinsic Riemannian metric admits a Riemannian isometry $\phi:O_z\to\mb{H}^d-H$ where $H$ is a half-hyperplane contained in a hyperplane $U\subset \mb{H}^d$ with $\phi(z)\in U-H$. Furthermore, $\phi$ extends continuously to $\phi:O_z\cup\partial O_z\to\mb{H}^d$ and the restrictions of $\phi$ to $H_z',H_z''$ are both isometries onto $H$. 

The key observation is that every geodesic $\tau:[0,r]\to\mb{H}^d(\theta)$ starting at $z$ and intersecting $\partial O_z=H_z'\cup H_z''$ must be contained in $H_z$ and intersect the singularity $\mb{H}^{d-2}=H_z'\cap H_z''$. Indeed, let $t\in(0,r)$ be the first time $\tau$ hits $\partial O_z$, say $\tau(t)\in H_z'$. The geodesic $\tau[0,t]$ is a Riemannian geodesic for the intrinsic Riemannian metric on $O_z$. Thus $\phi\tau[0,t]$ is a Riemannian geodesic for $\mb{H}^d-H$ starting at $\phi(z)\in U-H$ and limiting to a point in $\phi(\tau(t))\in H$. By the structure of the geodesics in $\mb{H}^d$, being both $\phi(z)$ and $H$ contained in the hyperplane $U$, the only possibility is that the limit point is on the singularity $\partial H$ and, hence, $\tau(t)\in\mb{H}^{d-2}$. 

We are now able to conclude the proof of Property (1). Consider two arbitrary non-singular points $x,y\in\mb{H}^d(\theta)-\mb{H}^{d-2}$ such that both angles formed by $H_x,H_y$ at $\mb{H}^{d-2}$ are at least $\pi$. In particular $y\not\in O_x$ and $x\not\in O_y$. Let $\tau:[0,r]\to\mb{H}^d(\theta)$ be a geodesic joining $x$ to $y$. By the above discussion, an initial (resp. terminal) segment of $\tau$, say $\tau[0,t_x]$ (resp. $\tau[t_y,r]$), joins $x$ (resp. $y$) to the singularity $\mb{H}^{d-2}$ and is contained in $H_x$ (resp. $H_y$). The middle segment $\tau[t_x,t_y]$ joins two points of the singularity. As there is a natural 1-Lipschitz retraction $\mb{H}^d(\theta)\to\mb{H}^{d-2}$ which is strictly 1-Lipschitz outside $\mb{H}^{d-2}$, we have that $\tau[t_x,t_y]$, being length minimising, must be be contained in $\mb{H}^{d-2}$.

The conclusion follows.
\end{proof}

Let us go back to the branched cover $M_k$ and the sector $\bar{S}_j\subset M_k$. By construction, they are locally modeled on $\mb{H}^d(2k\pi)$ and $V(2\pi)\subset\mb{H}^d(2k\pi)$, we deduce the following information from Proposition \ref{pro:cat-1cone} and Lemma \ref{lem: tot geo}.

\begin{cor}
\label{cor:gromov hyperbolic}
The following holds.
\begin{itemize}
    \item{Both $\bar{S}$ and $M_k$ are locally ${\rm CAT}(-1)$.}
    \item{Both $\pi_1(S)$ and $\pi_1(M_k)$ are Gromov hyperbolic and torsion-free.}
    \item{The inclusion $\partial\bar{S}\subset\bar{S}$ is locally isometric.}
    \item{The inclusion $\bar{S}\subset M_k$ is locally isometric.}
\end{itemize}     
\end{cor}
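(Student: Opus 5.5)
The plan is to verify the local ${\rm CAT}(-1)$ property pointwise using the local models, and then deduce the global group-theoretic statements from the Cartan--Hadamard theorem for locally ${\rm CAT}(-1)$ spaces. I take the four bullets in the order local geometry, local isometry of the inclusions, then hyperbolicity and torsion-freeness.

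\emph{Local models.} Every point of $M_k$ away from $B_k$ lies in the interior of some $\bar S_j$ or in the interior of some glued face $W_j^+=W_{j+1}^-$. In either case it has a neighbourhood isometric to a ball in $\mb{H}^d$, because the two faces meet at angle $\pi$ along totally geodesic copies of $W$, so gluing reproduces a smooth hyperbolic structure. A point of $B_k$ has a neighbourhood isometric to a ball about a singular point of $\mb{H}^d(2k\pi)$, since $k$ sectors each of angle $2\pi$ are glued cyclically. For $k\ge 1$ we have $2k\pi\ge 2\pi$, so Proposition~\ref{pro:cat-1cone} shows these balls are ${\rm CAT}(-1)$, and hence $M_k$ is locally ${\rm CAT}(-1)$.

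\emph{The sector and the inclusions.} Near a point of $B\subset\bar S$, the complete sector is modelled on $V(2\pi)\subset\mb{H}^d(\theta)$ with $\theta=2k\pi$. To apply Lemma~\ref{lem: tot geo}(2) we need $\theta>2\pi$ and $2\pi\le\theta-\pi$, which holds for $k\ge 2$; we may take e.g.\ $\theta=4\pi$ as the local model. Then $V(2\pi)$ is convex and ${\rm CAT}(-1)$, so $\bar S$ is locally ${\rm CAT}(-1)$; away from $B$ it is a hyperbolic manifold with totally geodesic boundary, which is locally convex. Convexity of $V(2\pi)$ also shows that $\bar S\subset M_k$ is locally isometric. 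For $\partial\bar S\subset\bar S$, I apply Lemma~\ref{lem: tot geo}(1) to the two half-hyperplanes bounding $V(2\pi)$. The angles they form are $2\pi$ and $\theta-2\pi\ge 2\pi$, both $\ge\pi$, so their union, which is the local model of $\partial\bar S=DW$, embeds isometrically. Away from $B$, the boundary is totally geodesic.

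\emph{Groups.} $M_k$ is compact, geodesic and locally ${\rm CAT}(-1)$. By Cartan--Hadamard its universal cover is ${\rm CAT}(-1)$, and $\pi_1(M_k)$ acts on it properly and cocompactly. Hence $\pi_1(M_k)$ is hyperbolic by Švarc--Milnor, and it is torsion-free because finite groups acting on ${\rm CAT}(0)$ spaces have fixed points while deck transformations act freely. The same argument applies to the compact space $\bar S$. Since $S\hookrightarrow\bar S$ is a homotopy equivalence (collar of the boundary; the singular locus is also a boundary collar from one side), we get $\pi_1(S)\cong\pi_1(\bar S)$, which is therefore hyperbolic and torsion-free.

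The main obstacle is the bookkeeping at $B$ for $\bar S$, namely making sure the relevant local model satisfies the angle hypotheses of Lemma~\ref{lem: tot geo}. The point is that $\bar S$ is intrinsically determined, so one may embed its local model in $\mb{H}^d(4\pi)$ regardless of $k$.
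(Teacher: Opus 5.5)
Your proposal is correct and follows the paper's approach: the paper reads off all four bullets from the local models $\mb{H}^d(2k\pi)$ and $V(2\pi)\subset\mb{H}^d(2k\pi)$ via Proposition \ref{pro:cat-1cone} and Lemma \ref{lem: tot geo}, and you additionally spell out the Cartan--Hadamard, \v{S}varc--Milnor and fixed-point arguments that the paper leaves implicit for the second bullet. Checking the intrinsic model $V(2\pi)$ of $\bar S$ inside $\mb{H}^d(4\pi)$ is a sensible refinement, since the angle hypotheses of Lemma \ref{lem: tot geo}(2) fail in $\mb{H}^d(2k\pi)$ when $k=1$; and, as you note, the last bullet genuinely requires $k\ge 2$.
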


\subsection{Topology}
As an application of the above discussion, we establish various $\pi_1$-injectivity properties.

\begin{lemma}
\label{lem:pi1injective}
The following holds.
\begin{enumerate}
    \item{The inclusion $B\to\bar{S}_j$ is $\pi_1$-injective.}
    \item{The inclusion $W\to\partial\bar{S}_j$ is $\pi_1$-injective.}
    \item{If $k\ge 2$ then the inclusion $\partial\bar{S}_j\to M_k$ is $\pi_1$-injective.}
    \item{If $k\ge 2$ then the inclusion $\bar{S}_j\to M_k$ is $\pi_1$-injective.}
\end{enumerate}
\end{lemma}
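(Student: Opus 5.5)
The approach is the standard one for locally ${\rm CAT}(-1)$ (more generally locally ${\rm CAT}(0)$) spaces. If $Y\subset X$ is a connected subspace of a complete geodesic space that is locally ${\rm CAT}(-1)$, and $Y$ with its intrinsic path metric is itself complete, locally ${\rm CAT}(-1)$ and locally convex (i.e. the inclusion is a local isometry), then $Y\to X$ is $\pi_1$-injective. The argument runs through Cartan--Hadamard. The universal covers $\tilde Y$ and $\tilde X$ are ${\rm CAT}(-1)$. A local isometry lifts to a local isometry $\tilde Y\to\tilde X$. In a ${\rm CAT}(0)$ space local geodesics are geodesics, so a local isometry between ${\rm CAT}(0)$ spaces is an isometric embedding, hence injective. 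If a loop in $Y$ were nontrivial in $\pi_1(Y)$ but trivial in $\pi_1(X)$, the lift of the loop to $\tilde Y$ would have distinct endpoints mapping to the same point of $\tilde X$, which is a contradiction. So in each of the four items I only need to exhibit the relevant inclusion as a local isometry between locally ${\rm CAT}(-1)$ spaces. Compactness gives completeness throughout.

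Items (3) and (4) come directly from the results above. Corollary \ref{cor:gromov hyperbolic} states that $\bar S\subset M_k$ is locally isometric and that $M_k$ and $\bar S$ are locally ${\rm CAT}(-1)$, which gives (4); this is where $k\ge2$ enters, since the local model for $\bar S_j$ is $V(2\pi)\subset\mb H^d(2k\pi)$ and Lemma \ref{lem: tot geo}(2) needs $2\pi\le 2k\pi-\pi$. For (3), I work with $\partial\bar S_j\cong DW$. With the intrinsic metric it is locally modelled on $\mb H^{d-1}$ away from $B$, and near $B$ on two half-hyperplanes meeting at angle $\pi$ on each side, i.e. on a copy of $\mb H^{d-1}$. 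So $\partial \bar S_j$ is a closed hyperbolic $(d-1)$-manifold and in particular locally ${\rm CAT}(-1)$. It is locally isometrically embedded in $M_k$: away from $B_k$ this is just total geodesicity of $W$, and near $B_k$ it is exactly Lemma \ref{lem: tot geo}(1). The half-hyperplanes $W_j^-$ and $W_j^+$ form angles $2\pi$ and $2(k-1)\pi$ at the singularity, and both are $\ge\pi$ because $k\ge2$. Alternatively, (3) follows by composing with the local isometry $\partial\bar S_j\to\bar S_j$ from Corollary \ref{cor:gromov hyperbolic} and using (4). Either way, the criterion gives injectivity.

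For (2), $W$ is a compact totally geodesic hypersurface with totally geodesic boundary $B$ inside the closed hyperbolic manifold $DW$. It is convex in $DW$: locally near $B$ it is a half-space of $\mb H^{d-1}$, which is convex. Since $W$ is locally ${\rm CAT}(-1)$ with its intrinsic metric, the criterion applies. The local convexity near the boundary is what makes the manifold-with-boundary case work; alternatively, $W$ is a retract of its double $DW$ via the folding map, so $\pi_1(W)\to\pi_1(DW)$ is injective for purely topological reasons. I would use the retraction argument, since it is cleaner.

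For (1), I factor $B\to W\to\partial\bar S_j$ (or use $B\subset\partial\bar S_j$ directly). $B$ is a closed totally geodesic hypersurface in the hyperbolic manifold $DW$, so its inclusion is a local isometry between locally ${\rm CAT}(-1)$ spaces and hence $\pi_1$-injective. Then $\partial\bar S_j\to\bar S_j$ is $\pi_1$-injective by the same criterion, using the third bullet of Corollary \ref{cor:gromov hyperbolic}. Composing gives (1) without any assumption on $k$.

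The main obstacle is justifying the general local-isometry criterion carefully in this singular (cone) setting, as opposed to the Riemannian one. Concretely, I have to check that each subspace is complete and locally ${\rm CAT}(-1)$ with its induced path metric, and that local isometry means local geodesics stay local geodesics. I would handle this by citing Bridson--Haefliger's statement that a local isometry from a complete connected locally ${\rm CAT}(0)$ space into a locally ${\rm CAT}(0)$ space is $\pi_1$-injective. After that, it only remains to verify the angle conditions in Lemma \ref{lem: tot geo}, which are exactly where the hypothesis $k\ge2$ is used.
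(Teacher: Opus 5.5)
Your proposal is correct and takes essentially the paper's route: every inclusion is a local isometry of complete, locally ${\rm CAT}(-1)$ spaces (Corollary~\ref{cor:gromov hyperbolic}, with the angle conditions of Lemma~\ref{lem: tot geo} using $k\ge 2$), and $\pi_1$-injectivity then follows from \cite[Proposition 4.14]{BH}. Your other choices are harmless variants of the same argument: the folding retraction $DW\to W$ for (2), and factoring (1) through $\partial\bar S_j$.
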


\begin{proof}
The four inclusions are all locally isometric (Corollary \ref{cor:gromov hyperbolic}). The conclusion follows from \cite[Proposition 4.14]{BH}.
\end{proof}

\section{Branched cover as a virtual Dehn filling}
\label{sec:vir_Dehn_filling}

Let $(M,B)$ be a {\rm GT-pair} with wall $W$. Let $M_k$ be the canonical degree $k$ branched cover associated with it. In this section we describe $\pi_1(M_k)$ as a virtual Dehn filling of $\pi_1(M-B)$. In particular, we will realise it as a canonical finite index subgroup of 
\[
\pi_1(M_k)<Q_k:=\pi_1(M-B)/\langle\langle\gamma^k\rangle\rangle
\]
where $\gamma$ is a generator of ${\rm ker}(\pi_1(U-B)\to \pi_1(U))$ (a so-called {\em meridian}) and $U$ is a tubular neighborhood of $B$ in $M$.

The main reason for relating $\pi_1(M_k)$ to $\pi_1(M-B)$ is that $M-B$ admits a complete locally ${\rm CAT}(-k)$-Riemannian metric $g$ for which a drilled neighborhood $U-B$ of $B$ lifts to a horoball in the universal cover of $(M-B,g)$. As a consequence, $\pi_1(M-B)$ is relatively hyperbolic with respect to $\pi_1(U-B)$ as first proved by Belegradek \cite{Bel} (see Theorem \ref{thm:rel hyp}). 

By the theory of algebraic Dehn fillings initiated in \cite{Osin,GM}, the groups $\pi_1(M_k)$ act on the (uniformly hyperbolic) spaces $X_k:=X/\langle\langle\gamma^k\rangle\rangle$ where $X$ is the cusp space for the relatively hyperbolic pair $(\pi_1(M-B),\pi_1(U-B))$. This fact plays a crucial role in the proof of the key structural results carried out in the next section.  

Another consequence of this point of view is that we can transform the automorphism of $\pi_1(M_k)$ provided by the order $k$ rotation $\rho_k:M_k\to M_k$ into a conjugation inside the group $Q_k$. This will facilitate a lot the study of the algebraic structure of $\pi_1(M_k)$ and of ${\rm Out}(\pi_1(M_k))$.

\subsection{Fundamental group}
The following lemma is crucial for our approach. It describes the fundamental groups of a Gromov-Thurston manifold $M_k$ as a virtual Dehn filling of $\pi_1(M-B)$. The proof requires careful bookkeeping of various identifications and isomorphisms, but it is ultimately an application of van Kampen's theorem.

\begin{lemma}
\label{lem:van kampen}
Let $U\subset M$ be a tubular neighborhood of $B$ in $M$ and let $y\in U-B$ be a basepoint. Denote by $U_k\subset M_k$ the pre-image of $U$ under the branched covering map $p_k:M_k\to M$ and choose a basepoint $y_k\in U_k-B_k$ such that $p_k(y_k)=y$. Consider $\gamma\in\pi_1(U-B,y)$ a generator of ${\rm ker}(\pi_1(U-B,y)\to\pi_1(U,y))$. The projection $p_k:M_k-B_k\to M-B$ induces an injective homomorphism
\[
\pi_1(M_k,y_k)\to Q_k:=\pi_1(M-B,y)/\langle\langle \gamma^k\rangle\rangle
\]
whose image is an index-$k$ normal subgroup. The quotient group is cyclic of order $k$. 
\end{lemma}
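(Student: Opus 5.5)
The plan is to combine a covering-space argument with van Kampen applied to the decomposition $M_k=(M_k-B_k)\cup U_k$. Write $\Gamma=\pi_1(M-B,y)$.

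\textbf{The unbranched cover.} The restriction $p_k:M_k-B_k\to M-B$ is an honest (unbranched) connected $k$-sheeted regular covering, with deck group generated by the canonical rotation $\rho_k$. It therefore corresponds to a normal subgroup
\[
\Lambda:=(p_k)_*\pi_1(M_k-B_k,y_k)\lhd\Gamma, \qquad \Gamma/\Lambda\cong\mb{Z}/k\mb{Z}.
\]
Concretely, $\Lambda$ is the kernel of the homomorphism $\Gamma\to\mb{Z}/k\mb{Z}$ that counts algebraic intersection with $W$ mod $k$. This homomorphism is well defined because $W$ is orientable and $M$ is orientable, so $W$ is two-sided and defines a class in $H^1(M-B;\mb{Z})$. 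It is surjective because the meridian $\gamma$ crosses $W$ exactly once, so $\gamma\mapsto 1$.

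\textbf{Van Kampen.} Since $U_k-B_k$ is a tubular-neighbourhood-minus-core, $\pi_1(U_k-B_k)\cong\pi_1(B)\times\mb{Z}$ when the normal bundle is trivial; in general it is an extension of $\pi_1(B)$ by $\mb{Z}$. In either case the kernel of $\pi_1(U_k-B_k)\to\pi_1(U_k)=\pi_1(B_k)$ is normally generated by the meridian $\gamma_k$ of $B_k$. Van Kampen applied to $M_k=(M_k-B_k)\cup U_k$ then gives
\[
\pi_1(M_k,y_k)\cong\pi_1(M_k-B_k,y_k)/\langle\langle\gamma_k\rangle\rangle.
\]
The same argument gives $\pi_1(M)\cong\Gamma/\langle\langle\gamma\rangle\rangle$, although we only need the statement for $M_k$. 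Since $p_k$ wraps the meridian circle of $B_k$ $k$ times around that of $B$, we have $(p_k)_*\gamma_k=\gamma^k\in\Lambda$, up to the choice of basepoint path inside $U_k-B_k$. Hence
\[
\pi_1(M_k)\cong\Lambda/\langle\langle\gamma^k\rangle\rangle_\Lambda,
\]
where the normal closure is taken in $\Lambda$.

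\textbf{Comparing normal closures.} This is the main point. We need
\[
\langle\langle\gamma^k\rangle\rangle_\Gamma=\langle\langle\gamma^k\rangle\rangle_\Lambda.
\]
The inclusion $\supseteq$ is clear. For $\subseteq$, note that $\Gamma/\Lambda$ is cyclic generated by the image of $\gamma$, so $\Gamma=\bigcup_{i=0}^{k-1}\Lambda\gamma^i$. Any conjugate $g\gamma^k g^{-1}$ with $g=\lambda\gamma^i$ equals $\lambda\gamma^k\lambda^{-1}$, because $\gamma^i$ commutes with $\gamma^k$. So every $\Gamma$-conjugate of $\gamma^k$ is already a $\Lambda$-conjugate, and the two normal closures coincide.

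\textbf{Conclusion.} Set $N=\langle\langle\gamma^k\rangle\rangle_\Gamma\subset\Lambda$. The natural map
\[
\pi_1(M_k)\cong\Lambda/N\to\Gamma/N=Q_k
\]
is injective. Its image $\Lambda/N$ is normal in $Q_k$, with quotient $\Gamma/\Lambda\cong\mb{Z}/k\mb{Z}$, cyclic of order $k$, generated by the image of $\gamma$. Finally, one checks that this map is the one induced by $p_k$. The composite $\pi_1(M_k-B_k)\to\Gamma\to Q_k$ kills $\gamma_k$, so it factors through $\pi_1(M_k)$.

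The main obstacle I expect is bookkeeping: choosing basepoints and paths so that the meridian generators match as $(p_k)_*\gamma_k=\gamma^k$, and justifying the van Kampen step when the normal circle bundle of $B$ is nontrivial. In that case I would use the fibration $S^1\to U-B\to B$: it shows that $\ker(\pi_1(U-B)\to\pi_1(U))$ is exactly the fibre subgroup, generated by $\gamma$, as the statement presupposes.
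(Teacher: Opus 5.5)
Your proof is correct and follows essentially the same route as the paper: van Kampen applied to $M_k=(M_k-B_k)\cup U_k$, covering theory for the regular $k$-sheeted cover $M_k-B_k\to M-B$, and $(p_k)_*\gamma_k=\gamma^k$ from the local model. Your explicit check that $\langle\langle\gamma^k\rangle\rangle_\Gamma=\langle\langle\gamma^k\rangle\rangle_\Lambda$, using that $\gamma$ generates $\Gamma/\Lambda$, spells out the injectivity step that the paper only asserts.
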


\begin{proof}
Recall that the manifold $M_k$ is constructed as
\[
M_k:=\bigsqcup_{j\in\mb{Z}/k\mb{Z}}{\bar{S}_j}\left/\,\{W_j^+=W_{j+1}^-:j\in\mb{Z}/k\mb{Z}\}\right..
\]

We can assume that $U\subset M$ intersects the wall $W$ in a collar neighborhood $V\subset W$ of $B=\partial W$ in $W$. In particular, $U$ defines a neighborhood of $B$, denoted by $\bar{U}$, in the complete sector $\bar{S}$ intersecting the boundary $\partial\bar{S}=DW$ in the double of $V$, that is $\bar{U}\cap\partial\bar{S}=DV$. Denote by $V^+\subset W^+,V^-\subset W^-$ the two halves of $DV$ in $W^+,W^-$ respectively. The pre-image of $U$ in $M_k$ under the canonical branched cover projection $p_k:M_k\to M$ can be described as a gluing of $k$ copies $\bar{U}_j\subset\bar{S}_j$ of $\bar{U}$ with identifications $V_j^+=V_{j+1}^-$, that is
\[
U_k:=\bigsqcup_{j\in\mb{Z}/k\mb{Z}}{\bar{U}_j}\left/\,\{V_j^+=V_{j+1}^-:j\in\mb{Z}/k\mb{Z}\}\right..
\]

From the above discussion it follows that $p_k:U_k\to U$ is $\mb{Z}/k\mb{Z}$-equivariantly equivalent to the standard branched cover $({\rm id},\pi_k):B\times\mb{D}^2\to B\times\mb{D}^2$ where $\mb{D}^2$ is the open unit disk and $\pi_k:\mb{D}^2\to\mb{D}^2$ is map sending (in polar coordinates) $\pi_k(r,\theta)=(r,k\theta)$ (it is a branched cover of $\mb{D}^2$ of degree $k$ branched over $0\in\mb{D}^2$). In particular $p_k$ maps a generator $\gamma_k$ of the kernel ${\rm ker}(\pi_1(U_k-B_k,y_k)\to\pi_1(U_k,y_k))$ (represented by a circle centered at $0$ in $\{{\rm pt}\}\times\mb{D}^2$) to $p_k(\gamma_k)=\gamma^k$ where $\gamma$ is a generator of ${\rm ker}(\pi_1(U-B,y)\to\pi_1(U,y))$ (represented by a circle centered at $0$ in $\{{\rm pt}\}\times\mb{D}^2$).

By van Kampen, we have
\[
\pi_1(M_k,y_k)\simeq\pi_1(M_k-B_k,y_k)*_{\pi_1(U_k-B_k,y_k)}\pi_1(U_k,y_k).
\]

By the above discussion, $U_k$ is a tubular neighborhood of $B_k$ in $M_k$ and we have that $\pi_1(U_k-B_k,y_k)\to\pi_1(U_k,y_k)$ is surjective. Thus, we have a canonical identification
\[
\pi_1(M_k-B_k,y_k)*_{\pi_1(U_k-B_k,y_k)}\pi_1(U_k,y_k)\simeq\pi_1(M_k-B_k,y_k)/\langle\langle \gamma_k\rangle\rangle.
\]
where ${\rm Ker}(\pi_1(U_k-B_k,y_k)\to\pi_1(U_k,y_k))=\langle\gamma_k\rangle$. By the above discussion, we also have $p_k(\gamma_k)=\gamma^k$. In particular, the map $(p_k)_*:\pi_1(M_k-B_k,y_k)\to Q_k=\pi_1(M-B,y)/\langle\langle\gamma^k\rangle\rangle$ passes to the quotient $\pi_1(M_k-B_k,y_k)/\langle\langle\gamma_k\rangle\rangle$ and, by the van Kampen identification, it induces a map
\[
(p_k)_*:\pi_1(M_k,y_k)\simeq\pi_1(M_k-B_k,y_k)/\langle\langle\gamma_k\rangle\rangle\to Q_k.
\]

Finally, let us discuss injectivity, normality of the image, and the quotient group. Essentially, everything boils down to standard properties of covering spaces. Observe that $p_k:M_k-B_k\to M-B$ is a covering map of degree $k$ and that the rotation $\rho_k:M_k\to M_k$ restricts to an order $k$ covering automorphism acting transitively on the fibers of $p_k:M_k-B_k\to M-B$. Thus, the latter is a regular covering and its automorphism group is the cyclic group $\langle\rho_k\rangle$. By covering theory, this implies that $(p_k)_*:\pi_1(M_k-B_k,y_k)\to\pi_1(M-B,y)$ is injective and that the image is an index-$k$ normal subgroup (containing $\langle\langle\gamma^k\rangle\rangle$ as $(p_k)_*(\gamma_k)=\gamma^k$) whose quotient group is isomorphic to $\langle\rho_k\rangle\simeq\mb{Z}/k\mb{Z}$.

As a consequence, the image of $(p_k)_*:\pi_1(M_k-B_k,y_k)/\langle\langle\gamma_k\rangle\rangle\to Q_k$ is an index-$k$ normal subgroup with quotient isomorphic to $\mb{Z}/k\mb{Z}$. As $(p_k)_*:\pi_1(M_k-B_k,y_k)\to \pi_1(M-B,y)$ is injective and the image is normal and contains $(p_k)_*(\gamma_k)=\gamma^k$, the intersection between the image group and the subgroup $\langle\langle\gamma^k\rangle\rangle$ is exactly the image of the normal subgroup $\langle\langle\gamma_k\rangle\rangle<\pi_1(M_k-B_k,y_k)$. It follows that the map $(p_k)_*:\pi_1(M_k-B_k,y_k)/\langle\langle\gamma_k\rangle\rangle\to Q_k$ is injective. 
\end{proof}

\subsection{The rotation is a conjugation}
Having the description of $\pi_1(M_k)$ as a virtual Dehn filling of $\pi_1(M-B)$ provided by Lemma \ref{lem:van kampen}, we turn to the action of the rotation $\rho_k:M_k\to M_k$ (see Definition \ref{dfn:branched cover}) on $\pi_1(M_k)$. For convenience, we conflate $\rho_k$ with the automorphism of $\pi_1(M_k)$ that it induces. The next lemma identifies this automorphism with a conjugation in $Q_k$.

\begin{lemma}
\label{lem:rotation is conjugation}
Notations as in Lemma \ref{lem:van kampen}. Consider the injective homomorphism 
\[
\pi_1(M_k,y_k)\to Q_k=\pi_1(M-B,y)/\langle\langle\gamma^k\rangle\rangle
\]
induced by the branched covering projection $p_k:M_k\to M$ (described in Lemma \ref{lem:van kampen}). There is a representative of $\rho_k$ in ${\rm Out}(\pi_1(M_k,y_k))$ whose action on the image of $\pi_1(M_k,y_k)$ coincides with the restriction to the image of $\pi_1(M_k,y_k)$ of the conjugation by $\gamma$ in $Q_k$. Furthermore, the projection of $\gamma$ to the order-$k$ cyclic group $Q_k/{\rm Im}(\pi_1(M_k,y_k)\to Q_k)$ (by Lemma \ref{lem:van kampen}) is a generator.
\end{lemma}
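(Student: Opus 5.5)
We work first at the level of the unbranched regular covering $p_k:M_k-B_k\to M-B$ and then pass to the quotients by the meridians using the van Kampen identification of Lemma \ref{lem:van kampen}.

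\textbf{Step 1: the rotation as a deck transformation.} From the proof of Lemma \ref{lem:van kampen}, $\rho_k$ restricts to a generator of the deck group of the regular cover $p_k:M_k-B_k\to M-B$. By standard covering theory, choose a path $c$ in $M_k-B_k$ from $y_k$ to $\rho_k(y_k)$. The automorphism
\[
\alpha\mapsto \bar c\cdot\rho_k(\alpha)\cdot c
\]
of $\pi_1(M_k-B_k,y_k)$ then represents $\rho_k$ up to inner automorphisms. The loop $p_k(c)$ is a closed loop at $y$, and under $(p_k)_*$ this automorphism becomes conjugation by $[p_k(c)]\in\pi_1(M-B,y)$ restricted to the image subgroup. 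Moreover, the deck transformation associated with $[p_k(c)]$ is $\rho_k$, so the class of $[p_k(c)]$ in $\pi_1(M-B)/(p_k)_*\pi_1(M_k-B_k)\cong\langle\rho_k\rangle$ is a generator.

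\textbf{Step 2: choosing $c$ so that $p_k(c)=\gamma$.} Work inside $U_k$, using the equivariant model $({\rm id},\pi_k):B\times\mb{D}^2\to B\times\mb{D}^2$ from the proof of Lemma \ref{lem:van kampen}. Here $\rho_k$ acts as rotation by $2\pi/k$ in the disk factor, possibly composed with an inverse; we may choose the orientation of $\gamma$ to match, replacing $\gamma$ by $\gamma^{-1}$ if necessary, since the statement is insensitive to this once we use a generator. Take $c$ to be the arc of angle $2\pi/k$ on the circle through $y_k$ in $\{{\rm pt}\}\times\mb{D}^2$ running from $y_k$ to $\rho_k(y_k)$. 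Its image under $\pi_k$ is exactly one full turn of the circle through $y$, which represents $\gamma$.

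\textbf{Step 3: passing to the quotients.} Conjugation by $\gamma$ on $\pi_1(M-B,y)$ preserves $\langle\langle\gamma^k\rangle\rangle$, so it descends to $Q_k$. Likewise, the automorphism $\alpha\mapsto\bar c\,\rho_k(\alpha)\,c$ of $\pi_1(M_k-B_k,y_k)$ preserves $\langle\langle\gamma_k\rangle\rangle$, because it is induced by a homeomorphism preserving $B_k$. It therefore descends to an automorphism of $\pi_1(M_k,y_k)$ representing the class of $\rho_k$ in ${\rm Out}$. By naturality of the van Kampen identification (the map $M_k-B_k\to M_k$ intertwines the automorphisms), the map $(p_k)_*$ on the quotients intertwines this automorphism with conjugation by $\gamma$ in $Q_k$. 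This gives the first claim.

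For the generator claim, Lemma \ref{lem:van kampen} shows that $Q_k/{\rm Im}$ is identified with $\pi_1(M-B)/(p_k)_*\pi_1(M_k-B_k)\cong\mb{Z}/k\mb{Z}$, since $\langle\langle\gamma^k\rangle\rangle$ lies in the latter image. Step 1 shows that $\gamma$ maps to the generator $\rho_k$ there.

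\textbf{Main obstacle.} The main difficulty is bookkeeping rather than any genuine mathematics. One must check that the basepoint-change automorphism for the path $c$ really descends compatibly through both van Kampen quotients, and that the deck-group identification sends $[p_k(c)]$ to $\rho_k$ rather than $\rho_k^{-1}$. I would handle the latter simply by fixing the orientation of $\gamma$ accordingly.
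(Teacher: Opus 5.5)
Your proposal is correct and takes essentially the same route as the paper. Your arc $c$ is exactly the paper's circular arc $\delta$; the paper obtains $\delta$ by homotoping $\eta^{-1}\star\rho_k(\eta)$, where it uses the basepoint $x_k\in B_k$ fixed by $\rho_k$ and transports along the radial path $\eta$. Both arguments then use $p_k\rho_k=p_k$, $p_k(\delta)=\gamma$, and the lift-endpoint description of the deck group to get the generator claim. The only cosmetic differences are that you define the representative directly through the basepoint-change path and descend from $\pi_1(M_k-B_k)$, and that you make explicit the orientation choice of $\gamma$, which the paper makes implicitly.
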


\begin{proof}
As in Lemma \ref{lem:van kampen}, the branched cover $p_k:U_k\to U$ is $\mb{Z}/k\mb{Z}$-equivariantly equivalent to the standard branched cover $({\rm id},\pi_k):B\times\mb{D}^2\to B\times\mb{D}^2$ where $\pi_k:\mb{D}^2\to\mb{D}^2$ is the map that in polar coordinates behaves as $\pi_k(r,\theta)=(r,k\theta)$. Under this equivalence $y\in U-B$ corresponds to $({\rm pt},re^{i\theta})\in B\times\mb{D}^2$. The radial path in $B\times\mb{D}^2$ given by $t\in[0,1]\mapsto ({\rm pt},t\cdot re^{i\theta})$ corresponds to a path $\eta:[0,1]\to U_k$ joining $x_k\in B_k$ to $y_k\in U_k-B_k$.

The path $\eta$ induces an isomorphism $\phi_\eta:\pi_1(M_k,x_k)\to\pi_1(M_k,y_k)$ sending
\[
[\alpha]\to[\eta^{-1}\star\alpha\star\eta].
\]

The rotation $\rho_k$ naturally induces an automorphism $(\rho_k)_*:\pi_1(M_k,x_k)\to\pi_1(M_k,x_k)$ and via the above isomorphism $\phi_\eta$ it induces also an automorphism of $\pi_1(M_k,y_k)$ which we now describe.

Let $\delta$ be the path in $U_k-B_k$ obtained by homotoping the concatenation $\eta^{-1}\star\rho_k(\eta)$ so that it misses $B_k$ in the following way. Working inside the slice of $U_k$ corresponding to $\{{\rm pt}\}\times\mb{D}^2$, the concatenation $\eta\star\rho_k(\eta)$ corresponds to the concatenation of the radial paths $t\mapsto({\rm pt},t\cdot re^{i\theta})$ and $t\mapsto({\rm pt},(1-t)\cdot re^{i(\theta+2\pi/k)})$. We can then homotope it to a circular arc $t\to re^{i(\theta+t2\pi/k)}$ joining $re^{i\theta}$ to $re^{i(\theta+2\pi/k)}$. The path $\delta$ is the path in $U_k$ that corresponds to this circular arc. Note that, since the $\pi_k$-projection of the circular arc to $\mb{D}^2$ is a full circle, the $p_k$-projection of $\delta$ is a circle in $U$ around $B$ which is a generator $\gamma$ of ${\rm ker}(\pi_1(U-B,y)\to\pi_1(U,y))$.

In conclusion, we have
\[
\phi_\eta(\rho_k)_*\phi_\eta^{-1}[\alpha]=[\delta\star\rho_k(\alpha)\star\delta^{-1}].
\]

By Lemma \ref{lem:van kampen}, the image in $Q_k=\pi_1(M-B,y)/\langle\langle\gamma^k\rangle\rangle$ of $\phi_\eta(\rho_k)_*\phi_\eta^{-1}[\alpha]$ is 
\[
(p_k)_*(\phi_\eta(\rho_k)_*\phi_\eta^{-1}[\alpha])=[p_k(\delta)\star p_k(\rho_k(\alpha))\star p_k(\delta)^{-1}].
\]

Since $p_k\rho_k=p_k$ we have that $p_k(\rho_k(\alpha))=p_k(\alpha)$. Recalling that $p_k(\delta)=\gamma$, the desired identification between the action of $\rho_k$ and the conjugation by $\gamma$ follows.

It remains to check that the projection of $\gamma$ to the cyclic group $Q_k/{\rm Im}(\pi_1(M_k,y_k)\to Q_k)$ is a generator. Recall that $Q_k/{\rm Im}(\pi_1(M_k,y_k)\to Q_k)$ is isomorphic to $\pi_1(M-B,y)/{\rm Im}(\pi_1(M_k-B_k,y_k)\to \pi_1(M-B,y))$ and that the latter is identified by standard covering theory with the deck group of $p_k:M_k-B_k\to M-B$. As already observed in the proof of Lemma \ref{lem:van kampen}, the deck group is the cyclic group $\langle\rho_k\rangle$ generated by the (restriction of the) rotation $\rho_k:M_k-B_k\to M_k-B_k$. Hence, in order to check that $\gamma$ is a generator, it is enough to check that the automorphism of $p_k:M_k-B_k\to M-B$ corresponding to $[\gamma]\in\pi_1(M-B,y)/{\rm Im}(\pi_1(M_k-B_k,y_k)\to \pi_1(M-B,y))$ is a generator of the deck group. By covering theory, the deck transformation corresponding to $[\gamma]$ is the unique one that maps the initial point of a lift of $\gamma$ to $M_k-B_k$ to the terminal point of the same lift. In our case, we can choose as a lift of $\gamma$ the path $\delta$ described before. Its terminal point is exactly the rotation $\rho_k$ applied to its initial point. Thus $[\gamma]$ corresponds to $\rho_k$ as desired. 
\end{proof}

\subsection{Relative hyperbolicity}
\label{sec:rel hyp}
We now move to the geometric part of the section.

The following relative hyperbolicity result is a key ingredient.

\begin{thm}[{\cite{Bel}}]
\label{thm:rel hyp}
Let $M$ be a closed hyperbolic $d$-manifold and $B\subset M$ a codimension 2 totally geodesic submanifold. Denote by $U$ a tubular neighborhood of $B$ in $M$. Then the pair $(\pi_1(M-B),\pi_1(U-B))$ is relatively hyperbolic.
\end{thm}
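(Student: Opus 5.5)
We prove Theorem \ref{thm:rel hyp} geometrically, via Bowditch's characterisation: a group is hyperbolic relative to a family of subgroups if it acts properly and cocompactly on the complement of a family of disjoint open horoballs in a proper Gromov hyperbolic space, with the peripheral subgroups being the horoball stabilisers. So the goal is a complete, finite-volume, negatively curved (pinched) Riemannian metric $g$ on $M-B$ in which $U-B$ is a cusp neighbourhood. Then the universal cover $(\widetilde{M-B},\tilde g)$ is a Hadamard manifold with sectional curvature $\le -a^2<0$, hence Gromov hyperbolic. The standard results on negatively curved finite-volume manifolds (Bowditch, Farb) then give relative hyperbolicity of $\pi_1(M-B)$ with respect to the cusp group $\pi_1(U-B)$.

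\textbf{Step 1: the model metric near $B$.} In a tubular neighbourhood, the hyperbolic metric on $U$ in Fermi coordinates is
\[
{\rm d}r^2+\sinh(r)^2{\rm d}\theta^2+\cosh(r)^2{\rm d}\sigma_B^2,
\]
where $\sigma_B$ is the hyperbolic metric on $B$ and $r\in(0,\epsilon)$. We replace it by a warped product on $(-\infty,\epsilon)\times \mb{S}^1\times B$ of the form
\[
{\rm d}t^2+f(t)^2{\rm d}\theta^2+h(t)^2{\rm d}\sigma_B^2.
\]
Near $t=\epsilon$ we take $f=\sinh$ and $h=\cosh$, so the metric agrees with the hyperbolic one. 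As $t\to-\infty$ we take $f(t)=ce^{at}$, which makes the $\mb{S}^1$ factor shrink and produces a cusp of finite volume, while $h$ stays bounded below, e.g.\ $h\to$ const or $h=\cosh$ in a reparametrised variable. The curvatures of a doubly warped product are explicit: $-f''/f$, $-h''/h$, $-f'h'/(fh)$, and $(-1-h'^2)/h^2$ for planes inside the $B$-direction (since $B$ has curvature $-1$). Choosing convex monotone interpolations, we check that all of these stay negative and pinched. This curvature verification is the main technical obstacle, and Belegradek's construction handles it. Concretely, we would first try $h\equiv\cosh(r_0)$ on the cusp end, with a careful monotone transition in which $f'/f$ and $h'/h$ both stay $\ge 0$ and $f''\ge0$, $h''\ge 0$.

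\textbf{Step 2: horoballs and the relative hyperbolicity conclusion.} In the universal cover, each lift of the end $\{t<t_0\}$ is a horoball. It is the sublevel set of a Busemann function associated with $-t$, because $t\mapsto(t,\theta,b)$ are geodesic rays and the levels $\{t=s\}$ are horospheres, as in rank-one cusps. These horoballs are pairwise disjoint, and the group acts cocompactly on their complement, since $M-B$ minus the cusp is compact. The stabiliser of a lifted horoball is the image of $\pi_1$ of the cusp, that is of $\pi_1(U-B)\cong \pi_1(B)\times\mb{Z}$. This subgroup is $\pi_1$-injective by the hyperbolicity of $(\widetilde{M-B},\tilde g)$ and the convexity of horoballs. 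Moreover, the quotient of a horoball boundary by its stabiliser is compact, so the action on horoballs has finitely many orbits. Bowditch's criterion then yields relative hyperbolicity of $(\pi_1(M-B),\pi_1(U-B))$.

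The peripheral subgroup here is not virtually nilpotent, since $\pi_1(B)$ is hyperbolic. This is why we need the more general Bowditch/Groves–Manning framework rather than the classical finite-volume pinched-curvature result. That framework only requires a proper geodesic hyperbolic space with a disjoint invariant horoball family and a cocompact action on the truncated space, all of which Steps 1–2 provide.
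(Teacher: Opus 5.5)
Your overall strategy is the one the paper has in mind when it cites \cite{Bel}. The paper gives no proof beyond the citation; it only describes a complete, locally ${\rm CAT}(-k)$ metric on $M-B$ in which $U-B$ lifts to horoballs, and then applies a Bowditch-type criterion. However, Step 1 as you specify it does not produce such a metric, and the rest of the argument rests on it.

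\textbf{Why your profile fails.} On the end you have $f'>0$. So negativity of $-f'h'/(fh)$ forces $h'>0$, and negativity of $-h''/h$ forces $h''>0$. Thus $h$ is positive, increasing and convex on $(-\infty,t_0)$. This gives $h'(t)\to 0$ as $t\to-\infty$ and $\int_{-\infty}^{t_0}h''\,dt=h'(t_0)<\infty$.

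If $h\ge L>0$, as you require, then neither $h''/h$ nor $f'h'/(fh)=a\,h'/h$ stays above a positive constant. So the sectional curvature on the end has supremum $0$. The bound $K\le -a^2<0$, on which your hyperbolicity claim for $(\widetilde{M-B},\tilde g)$ rests, is false.

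For your concrete choice $h\equiv\cosh(r_0)$, these curvatures vanish identically. The end is then a Riemannian product of a two-dimensional cusp with a rescaled copy of $B$. Its lift is convex, since its boundary is locally convex, and it is a horodisc times $\widetilde B$. It therefore contains flat squares of every size. So the universal cover is not even Gromov hyperbolic, and the levels $\{t=s\}$ are not horospheres of a hyperbolic space.

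\textbf{Pinching is impossible.} ``Negative and pinched'' cannot be achieved by any profile when $d\ge 4$. By the Margulis lemma, the ends of a complete finite-volume manifold of pinched negative curvature have virtually nilpotent fundamental group. But $\pi_1(U-B)\cong\pi_1(B)\times\mathbb{Z}$ is not virtually nilpotent, as you yourself note in your last paragraph. So Step 1 and your closing remark contradict each other.

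\textbf{The fix.} The \emph{whole} cross-section $\mb{S}^1\times B$ must shrink. For example, take $f=ce^{at}$ and $h=c'e^{bt}$ on the end, joined to $(\sinh,\cosh)$ by strictly increasing, strictly convex interpolations. On the compact transition region the mixed curvatures are then negative, hence bounded away from $0$.

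On the end you get $-f''/f=-a^2$, $-h''/h=-b^2$ and $-f'h'/(fh)=-ab$. Planes tangent to $B$ have curvature $-(1+h'^2)/h^2\to-\infty$. So $K\le-\min(a,b)^2$, with $K$ unbounded below. This is the non-pinched, locally ${\rm CAT}(-\kappa)$ metric the paper alludes to.

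Vertical rays in a lifted end are now pairwise asymptotic: at level $t$ their distance is at most $f(t)|\theta_1-\theta_2|+h(t)\,d_{\widetilde B}(b_1,b_2)\to 0$. Hence $-t$ is a Busemann function, the lifted ends are genuine horoballs, and the volume $\int f h^{d-2}\,dt$ is finite. With this metric your Step 2 goes through, because Bowditch's criterion only uses the upper curvature bound, properness, and cocompactness on the truncated space.
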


\begin{dfn}[Cusped Space]
We denote by $X$ the cusped space for the pair $(\pi_1(M-B),\pi_1(U-B))$. We have that  $\pi_1(M_k)<Q_k=\pi_1(M-B)/\langle\langle\gamma^k\rangle\rangle$ acts on the quotient $X_k:=X/\langle\langle\gamma^k\rangle\rangle$. We denote by $P_k$ the quotient of $\pi_1(U-B)<\pi_1(M-B)$.    
\end{dfn}

The following is a crucial property. It states that $X_k$ is a $\delta$-hyperbolic geodesic metric space where $\delta$ is a uniform constant independent of $k$.

\begin{pro}[{\cite[Proposition 2.3]{AGM09}}]
\label{pro:unif hyp}
There exist $\delta,k_0>0$ such that $X_k$ is $\delta$-hyperbolic for all $k\geq k_0$.
\end{pro}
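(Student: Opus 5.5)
The plan is to follow the Groves–Manning strategy for ``long'' Dehn fillings. The engine is a local-to-global principle for hyperbolicity: if every loop of length at most some fixed $L$ bounds a disk of area at most $L^2/C$, for a suitable universal constant $C$, then a linear isoperimetric inequality holds globally. From that inequality one obtains $\delta$-hyperbolicity with $\delta$ depending only on $L$ and $C$. So it suffices to find a single scale $L$ and one bound on areas of small loops that work for all $X_k$ with $k\ge k_0$.

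First, I fix the data on $X$. By Theorem \ref{thm:rel hyp}, $X$ is $\delta_0$-hyperbolic. It is built from a Cayley graph of $\pi_1(M-B)$ with combinatorial horoballs $\mathcal H(gP)$ attached along the cosets of $P=\pi_1(U-B)$. By $\delta_0$-hyperbolicity, $X$ satisfies a linear isoperimetric inequality. Fixing $L\gg\delta_0$, every loop of length at most $L$ in $X$ bounds a disk of area at most $A(L)$, with a linear bound. Next I use that $\gamma$ has infinite order in $P$: $P$ is torsion-free and $\gamma$ is a nontrivial meridian. Hence the word length of $\gamma^k$ in $P$ tends to infinity with $k$. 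So for $k\ge k_0$ the filling kernel $\langle\gamma^k\rangle$ avoids any prescribed finite ball of $P$, which is the ``long filling'' condition.

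Second, I show that the quotient map $X\to X_k$ is injective on balls of radius comparable to $L$ around points of the thick part, meaning points at bounded depth in the horoballs. The input is the standard Dehn filling structure theorem (Osin, Groves--Manning), valid for sufficiently long fillings. It says that $K=\langle\langle\gamma^k\rangle\rangle$ is a free product of conjugates of $\langle\gamma^k\rangle$. It also says that each nontrivial element of $K$ moves every point of the thick part by a large amount. Consequently, a short loop in $X_k$ that stays in the thick part lifts to a loop in $X$. It can therefore be filled with area at most $A(L)$.

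Third, I handle loops that penetrate deep into a horoball. In $X_k$ such a loop lies in a quotient horoball $\mathcal H(gP)/\langle g\gamma^k g^{-1}\rangle$, which is the combinatorial horoball over the Cayley graph of $P/\langle\gamma^k\rangle$. Combinatorial horoballs over arbitrary connected graphs are uniformly hyperbolic. Indeed, at depth $n$ horizontal distances shrink by $2^{-n}$, and any two points are joined by a path that goes vertically up, horizontally at most a bounded amount, and vertically down. This gives a filling of any short loop in the quotient horoball by a disk of area bounded independently of $k$.

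A short loop that passes from the thick part into a quotient horoball is cut along its entry and exit points, which lie at the top of the horoball. The two resulting pieces are then filled separately by the previous two steps. Combining the pieces gives the uniform small-loop filling bound. The local-to-global principle then yields a uniform $\delta$.

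The main obstacle is the second step: the injectivity radius of $X\to X_k$ on the thick part must grow with $k$. This needs a quantitative version of the Greendlinger/Cohen--Lyndon structure of $K$. Specifically, every nontrivial element of $K$, viewed as an isometry of $X$, must either translate the thick part by at least a function of $|\gamma^k|_P$ that tends to infinity, or be parabolic and fix a horoball deep inside. I would obtain this by induction on the length of a product of conjugates of $\gamma^{\pm k}$, in the spirit of \cite{AGM09}. The ping-pong between distinct horoballs is governed by the bounded coset penetration constant of $X$.
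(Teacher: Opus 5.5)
The paper gives no argument here: it imports the statement from \cite[Proposition 2.3]{AGM09}, which rests on the Groves--Manning theory of long fillings. You instead give a self-contained local-to-global proof: fill every loop of length $\le L$ in $X_k$ with area $\le A'L$, with $A'$ independent of $k$, then apply the subquadratic-implies-linear criterion at the fixed scale $L$. The citation buys brevity and generality. Your route shows why $\delta$ cannot depend on $k$: the only $k$-dependent input is an injectivity estimate on the thick part, and that only has to beat the fixed scale $L$. The route is sound, and easier than you think.

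The step you call the main obstacle needs no Greendlinger-type induction. Such an induction, run on arbitrary products of conjugates of $\gamma^{\pm k}$, would in any case need a reduced form, such as the free-product normal form, to control cancellation. Instead, note that $X$ is locally finite, so $F=\{h\in\pi_1(M-B): d_X(1,h)\le L\}$ is finite. The finite-set injectivity part of the Dehn filling theorem \cite{Osin,GM} then gives $F\cap K=\{1\}$ for $K=\langle\langle\gamma^k\rangle\rangle$ and all $k\ge k_0$. By normality of $K$, $d_X(x,gx)=d_X(1,x^{-1}gx)>L$ for every depth-$0$ vertex $x$ and every $g\in K-\{1\}$.

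Hence a loop of length $\le L$ through a depth-$0$ vertex lifts to a loop in $X$. A loop avoiding depth $0$ lies in a single quotient horoball, which is a combinatorial horoball over a Cayley graph of $P_k$ because $K\cap\pi_1(U-B)=\langle\gamma^k\rangle$ (Proposition~\ref{pro:order_k}). So your cutting along entry and exit points is unnecessary.

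What you should add is the choice of 2-complex on $X_k$. Attach cells along all loops of length $\le c$ in $X_k$, not just images of cells of $X$. Then the complex is simply connected: the loop representing $\gamma^k$, pushed to depth about $\log_2$ of the word length of $\gamma^k$ in $\pi_1(U-B)$, becomes a loop of length $1$. It also has uniformly bounded cell perimeters, which is what makes the resulting $\delta$ depend only on $L$ and $c$.
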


In particular, we get a collection of actions $\pi_1(M_k)\curvearrowright X_k$ on uniformly hyperbolic geodesic metric spaces. In the next section, we will study the possible degenerations of such actions by analysing their natural geometric limits.

Towards this goal, let us recall a defining property of the cusped space $X$ for the relative hyperbolic pair $(\pi_1(M-B),\pi_1(U-B))$ and how it is reflected in its quotients $X_k=X/\langle\langle\gamma^k\rangle\rangle$. 

The Gromov hyperbolic space $X$ contains a collection $\mc{H}$ of pairwise disjoint horoballs $H\subset X$ which is precisely invariant under $\pi_1(M-B)$, that is, such that for every $H\in\mc{H}$ and $\alpha\in\pi_1(M-B)$ either $\alpha H\cap H=\emptyset$ or $\alpha H=H$. Furthermore, there is a bijective correspondence between the horoballs $H\in\mc{H}$ and the conjugates of $\pi_1(U-B)$, namely, each conjugate of $\pi_1(U-B)$ stabilises some $H\in\mc{H}$ (on which it acts by parabolic isometries fixing the center at infinity of the horoball) and for every $H\in\mc{H}$ the stabiliser ${\rm Stab}(H)$ is a conjugate of $\pi_1(U-B)$. Once the horoballs in $\mc{H}$ are removed, the action becomes cocompact, that is, 
\[
\left(X-\bigcup_{H\in\mc{H}}{H}\right)/\pi_1(M-B)
\]
is compact.
    
Denote by $\pi_k:X\to X_k=X/\langle\langle\gamma^k\rangle\rangle$ the quotient projection. 

The collection $\mc{H}$ descends to $\mc{H}_k:=\{H_k:=\pi_k(H)\,|\,H\in\mc{H}\}$. It is immediate to verify that the family $\mc{H}_k$ is precisely invariant under the group $Q_k=\pi_1(M-B)/\langle\langle\gamma^k\rangle\rangle$ naturally acting on $X_k$. Since $\pi_1(M-B)$ is relatively hyperbolic with respect to $\pi_1(U-B)$ and $\gamma$ has infinite order, it is well-known \cite{Osin, GM} that for all sufficiently large $k$ the intersection $\langle\langle\gamma^k\rangle\rangle\cap\pi_1(U-B)$ coincides with the normal subgroup generated by $\gamma^k$ in $\pi_1(U-B)$. The latter agrees with the subgroup $\langle\gamma^k\rangle<\pi_1(U-B)$ as $\gamma$ generates the normal subgroup ${\rm ker}(\pi_1(U-B)\to\pi_1(U))$. Thus, the projection of $\pi_1(U-B)$ to $Q_k$ induces an injection
\[
\pi_1(U-B)/\langle\gamma^k\rangle\to Q_k.
\]
Note that since $\pi_1(U)=\pi_1(U-B)/\langle\gamma\rangle$, the group $P_k:=\pi_1(U-B)/\langle\gamma^k\rangle$ is a finite extension of $\pi_1(U)=\pi_1(B)$. More precisely, we have $\pi_1(U-B)\simeq\pi_1(B)\times\mb{Z}\gamma$ and $P_k\simeq\pi_1(B)\times\mb{Z}/k\mb{Z}\gamma$. Equipped with this notation, it is a routine argument to check that the stabilisers of $H_k\in\mc{H}_k$ correspond precisely to the conjugates in $Q_k$ of $P_k$.

\subsection{Torsion and centre}
Torsion elements and centralisers in Dehn filling of relatively hyperbolic groups are well-understood. We collect here some useful characterisations that will be very useful for us in later sections.

\begin{pro}[{\cite{Osin,GM}}]
\label{pro:order_k}
For all sufficiently large $k$, the map 
\[
\pi_1(U-B)/\langle\langle\gamma^k\rangle\rangle=\pi_1(B)\times\mb{Z}/k\mb{Z}\gamma\to Q_k=\pi_1(M-B)/\langle\langle \gamma^k\rangle\rangle
\]
is injective. In particular, the image of $\gamma$ has order exactly $k$ in $Q_k$.
\end{pro}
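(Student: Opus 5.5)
The plan is to apply the Dehn filling theorem of Osin and Groves--Manning to the relatively hyperbolic pair $(\pi_1(M-B),\pi_1(U-B))$ provided by Theorem \ref{thm:rel hyp}. Set $G=\pi_1(M-B)$ and $P=\pi_1(U-B)$. The filling kernels will be $N_k=\langle\gamma^k\rangle\lhd P$.

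First we record the structure of $P$. The tubular neighbourhood $U$ is a $\mb{D}^2$-bundle over $B$. Because $M$, $B$ and the wall are orientable, and because $B=\partial W$ makes the normal bundle trivial (the inward normal of $B$ in $W$ gives a nonvanishing section), we get $U-B\simeq B\times(\mb{D}^2-0)$. Hence $P\simeq\pi_1(B)\times\mb{Z}\gamma$, with $\gamma$ central in $P$. It follows that $\langle\gamma^k\rangle$ is normal in $P$ and that $P/\langle\gamma^k\rangle\simeq\pi_1(B)\times\mb{Z}/k\mb{Z}$.

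Next we invoke the Dehn filling theorem. It gives a finite set $F\subset P-\{1\}$ such that, whenever $N\lhd P$ avoids $F$, two things hold: $\langle\langle N\rangle\rangle_G\cap P=N$, and $G/\langle\langle N\rangle\rangle$ is hyperbolic relative to $P/N$. Each element of $F$ has the form $(b,\gamma^m)$. An element of $\langle\gamma^k\rangle-\{1\}$ has the form $\gamma^{kn}$ with $n\neq0$, and it lies in $F$ only if $b=1$ and $k$ divides $m\neq0$. This forces $k\le\max\{|m|:(1,\gamma^m)\in F\}$. So for all $k$ larger than this bound, $\langle\gamma^k\rangle$ avoids $F$, and we obtain $\langle\langle\gamma^k\rangle\rangle_G\cap P=\langle\gamma^k\rangle$. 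This is exactly injectivity of $P/\langle\gamma^k\rangle\to Q_k$.

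The final claim then follows. The order of $\gamma$ in $P/\langle\gamma^k\rangle$ is exactly $k$, since $\gamma$ has infinite order in $P\simeq\pi_1(B)\times\mb{Z}$. By the injectivity just established, its image in $Q_k$ also has order exactly $k$.

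The main obstacle is the first step: checking that $\gamma$ is central and generates a direct $\mb{Z}$-factor. This is what makes $\langle\gamma^k\rangle$ normal in $P$ rather than only its normal closure, so that the intersection is cyclic. The statement in the Dehn filling theorem that only finitely many elements must be avoided is quoted from \cite{Osin,GM}.
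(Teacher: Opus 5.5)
Your proposal is correct and follows essentially the same route as the paper, which also applies the Osin/Groves--Manning Dehn filling theorem to the pair $(\pi_1(M-B),\pi_1(U-B))$. Like you, it concludes that $\langle\langle\gamma^k\rangle\rangle\cap\pi_1(U-B)=\langle\gamma^k\rangle$ for large $k$, because the normal closure of $\gamma^k$ in $\pi_1(U-B)$ is just $\langle\gamma^k\rangle$. The paper gets that last point from the normality of $\langle\gamma\rangle=\ker(\pi_1(U-B)\to\pi_1(U))$, so centrality of $\gamma$ is more than is needed rather than the main obstacle. Your justification of the product structure via triviality of the normal bundle, and your explicit check that $\langle\gamma^k\rangle$ avoids the finite exceptional set, fill in details the paper leaves implicit.
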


\begin{lemma}{\cite[Lemma 4.3]{DG18}}
\label{lem:filling_finite_order}
For all sufficiently large $k$, an element $g$ of $\pi_1(M-B)/\langle\langle \gamma^k\rangle\rangle$ has finite order if and only if it is conjugate to a power of $\gamma$.
\end{lemma}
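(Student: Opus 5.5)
The plan is to use the action of $Q_k$ on the uniformly hyperbolic spaces $X_k$ (Proposition \ref{pro:unif hyp}) together with the classification of elliptic subgroups. One direction is immediate. Conjugates of powers of $\gamma$ have finite order, since $\gamma^k=1$ in $Q_k$ and by Proposition \ref{pro:order_k} the image of $\gamma$ has order exactly $k$.

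For the converse, let $g\in Q_k$ have finite order. I would proceed in three steps.

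First, take $k\ge k_0$, so that $X_k$ is $\delta$-hyperbolic with $\delta$ independent of $k$. A finite group acting on a $\delta$-hyperbolic geodesic space has a quasi-centre: its orbits have bounded diameter, and hence it moves some point at most $4\delta+2$ (the standard result of Bridson--Haefliger type). So $\langle g\rangle$ fixes coarsely a point $z\in X_k$, up to a uniform constant $C=C(\delta)$.

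Second, I distinguish two cases according to where $z$ lies.

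If $z$ lies in the thick part $X_k-\bigcup\mc{H}_k$, I would exploit Dehn filling theory (\cite{Osin,GM}). For $k$ large, the quotient map $\pi_k$ is injective on balls of radius $R\gg C$ in the thick part of $X$, because the relators $\gamma^k$ have large translation length on the thick part. Lift $z$ to $\tilde z\in X$. The element $g$ is represented by some $\tilde g\in\pi_1(M-B)$ moving $\tilde z$ a bounded amount. Since the thick part modulo $\pi_1(M-B)$ is compact and the action is proper, only finitely many elements, independent of $k$, move a thick point by at most $C$. Among these, the $\tilde g$ that become torsion in $Q_k$ must already be torsion in $\pi_1(M-B)$; this uses injectivity on finite sets, i.e.\ $\langle\langle\gamma^k\rangle\rangle$ avoids a fixed finite set of nontrivial elements for $k$ large. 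However, $\pi_1(M-B)$ is torsion-free, being the fundamental group of an aspherical manifold. So $\tilde g=1$ and therefore $g=1=\gamma^0$.

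If $z$ lies deep in a horoball $H_k\in\mc{H}_k$, then $g$ coarsely stabilises $H_k$. Since the horoballs are disjoint and precisely invariant, and $z$ is at depth greater than $C$, we get $gH_k=H_k$. Hence $g$ lies in a conjugate of $P_k\cong\pi_1(B)\times\mb{Z}/k\mb{Z}\gamma$, which is its image in $Q_k$ by Proposition \ref{pro:order_k}. Since $\pi_1(B)$ is torsion-free, the torsion elements of this product are exactly the powers of $\gamma$. So $g$ is conjugate to a power of $\gamma$.

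Third, I handle points near the boundary of a horoball. These can be folded into the thick case by enlarging $R$, or the horoball depth threshold can be adjusted so that the two cases cover everything.

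The main obstacle is the first case. There I need the uniform statement that, for $k$ large, no nontrivial element moving a thick point a bounded amount becomes torsion or trivial in $Q_k$. I would get this from the Greendlinger-type or injectivity-radius results of \cite{GM}: the quotient $X\to X_k$ is an isometry on $R$-balls in the thick part for sufficiently long fillings. The finite-set argument then applies by cocompactness. Alternatively, the whole converse is exactly the standard statement that torsion in long Dehn fillings is conjugate into the filled peripheral quotients, as in \cite[Theorem 1.1]{Osin}. I could simply cite it and reduce to the torsion of $P_k$ as in the horoball case.
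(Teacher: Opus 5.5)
Your easy direction and your deep-horoball case are fine. The gap is in the thick case. There you claim that, among the finitely many $\tilde g\in\pi_1(M-B)$ moving a thick point by at most $C$, those whose image in $Q_k$ is torsion are already torsion, and you justify this by injectivity of $\pi_1(M-B)\to Q_k$ on finite sets. This claim is false. Take $\tilde g=\gamma$: it is central in $\pi_1(U-B)\cong\pi_1(B)\times\mb{Z}$, so it moves points of small depth in the horoball stabilised by $\pi_1(U-B)$ by a uniformly bounded amount. These are exactly the points your third step folds into the thick case, so $\gamma$ lies in your finite set. Yet $\gamma$ has infinite order while its image in every $Q_k$ is torsion. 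The justification cannot work even in principle: whether the image of $\tilde g$ is torsion depends on the infinitely many elements $\tilde g^n$, and the filling kernel avoiding a fixed finite set says nothing about them. The ``uniform statement'' in your last paragraph (no nontrivial element moving a thick point a bounded amount becomes torsion in $Q_k$) fails for the same reason.

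The missing idea is to use the quasi-centre for the whole group $\langle g\rangle$, not only the displacement of $g$. Here is a repair.
\begin{itemize}
\item Let $n$ be the order of $g$. Fix a compact $D\subset X$ whose $\pi_1(M-B)$-translates cover all points of depth at most $C+1$.
\item Let $F$ be the finite set of nontrivial $h\in\pi_1(M-B)$ with $d_X(x,hx)\le 3C$ for some $x\in D$. Take $k$ large enough that $\langle\langle\gamma^k\rangle\rangle\cap F=\emptyset$; this is injectivity of long fillings on finite sets \cite{Osin,GM}.
\item After conjugating, lift $z$ to $\tilde z\in D$. For each $i\in\mb{Z}/n\mb{Z}$, choose a lift $\tilde g_i$ of $g^i$ with $d_X(\tilde z,\tilde g_i\tilde z)\le C$. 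This is possible because $X_k=X/\langle\langle\gamma^k\rangle\rangle$ carries the orbit-distance metric.
\item Then $\tilde g_i\tilde g_j\tilde g_{i+j}^{-1}$ lies in $\langle\langle\gamma^k\rangle\rangle$ and moves $\tilde z$ by at most $3C$, so it is trivial.
\item Hence $i\mapsto\tilde g_i$ is a homomorphism $\mb{Z}/n\mb{Z}\to\pi_1(M-B)$ lifting $\langle g\rangle$, and torsion-freeness of $\pi_1(M-B)$ forces $g=1$.
\end{itemize}
This is consistent with the example above: the quasi-centres of $\langle\gamma\rangle$ lie at depth growing with $k$, so they fall into your deep case. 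You do not need injectivity of $\pi_k$ on balls at all.

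With this repair your argument becomes a self-contained proof using only Proposition \ref{pro:unif hyp} and injectivity of long fillings on finite sets. The paper gives no argument of its own; it simply cites \cite[Lemma 4.3]{DG18}, which corresponds to your fallback of quoting the standard torsion-in-Dehn-fillings result.
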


\begin{lemma}
\label{lem:centre}
For all sufficiently large $k$, any non-elementary subgroup of $Q_k=\pi_1(M-B)/\langle\langle \gamma^k\rangle\rangle$ not conjugate into the image $P_k$ of $\pi_1(U-B)$ has trivial centraliser. In particular, $\pi_1(M-B)/\langle\langle \gamma^k\rangle\rangle$ has trivial centre.
\end{lemma}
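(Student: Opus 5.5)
We argue geometrically, using the action of $Q_k$ on the uniformly hyperbolic spaces $X_k$ from Proposition \ref{pro:unif hyp}, together with the structure of the horoball family $\mc{H}_k$. Let $H<Q_k$ be a non-elementary subgroup not conjugate into $P_k$, and let $z\in Q_k$ centralise $H$; we want $z=1$. We split into two cases according to whether $z$ has finite or infinite order.

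\textbf{Finite order.} By Lemma \ref{lem:filling_finite_order}, for $k$ large $z$ is conjugate to a power of $\gamma$. After conjugating, $z=\gamma^m$ with $\gamma^m\neq 1$, and $z$ lies in $P_k$, the stabiliser of a horoball $H_k^0\in\mc{H}_k$. Since $H$ commutes with $z$, every $h\in H$ conjugates $z$ into $hP_kh^{-1}={\rm Stab}(hH_k^0)$, so $z$ stabilises both $H_k^0$ and $hH_k^0$.

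The key input is the Dehn filling fact from \cite{Osin,GM}: for $k$ large, a nontrivial element of $\langle\gamma\rangle/\langle\gamma^k\rangle$ fixes a unique horoball, i.e.\ $\gamma^m\in gP_kg^{-1}$ forces $g\in P_k$. This is the standard statement that $P_k$ is almost malnormal and that the filled peripheral elements act elliptically, with fixed set near a single horoball. I would cite it from the Dehn filling literature (e.g.\ Dahmani--Guirardel--Osin, or via the very rotating family structure on $X_k$). It gives $h\in P_k$ for all $h\in H$, so $H<P_k$, contradicting the hypothesis.

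\textbf{Infinite order.} Here I use hyperbolicity. Since $X_k$ is hyperbolic, $Q_k$ acts on it, and the action is acylindrical relative to the filled peripherals. An infinite-order element is not conjugate into $P_k$ up to finite order, so by the cocompactness of the action on the complement of the horoballs it acts loxodromically on $X_k$. Then its centraliser is virtually cyclic, since it preserves the pair of fixed points $\{z^{\pm}\}$ and is elementary. This contradicts $H$ being non-elementary, because $H$ lies in that centraliser. The exception is if $z$ is parabolic or elliptic of infinite order. Such $z$ would be conjugate into $P_k\simeq\pi_1(B)\times\mb{Z}/k$, and then $z$ has a unique fixed horoball. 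The same unique-fixed-point argument as in the finite order case, applied via $\partial X_k$ (the horoball centres are fixed points at infinity), again forces $H$ into a conjugate of $P_k$.

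\textbf{Trivial centre.} The statement about $Q_k$ follows by taking $H=Q_k$, which is non-elementary and not conjugate into $P_k$ since $\pi_1(M-B)$ is non-elementary relatively hyperbolic.

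The main obstacle is the uniqueness of the fixed horoball for nontrivial elements of $\langle\gamma\rangle/\langle\gamma^k\rangle$ and, more generally, the malnormality of $P_k$ in $Q_k$ for large $k$. I would try to extract it from the Greendlinger-type lemma underlying Proposition \ref{pro:order_k}, as in \cite{GM}.
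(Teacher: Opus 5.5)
There is a genuine gap at exactly the step that carries the content of the lemma. In your finite-order case you need that, for $k$ large, a nontrivial $\gamma^m\in Q_k$ lies in only one conjugate of $P_k$, equivalently that it fixes only one point of $\partial X_k$.

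You justify this by almost malnormality of $P_k$, but that is not enough. Almost malnormality only says that $P_k\cap gP_kg^{-1}$ is \emph{finite} for $g\notin P_k$. Since $\gamma^m$ is itself a torsion element, it is precisely the kind of element almost malnormality allows in such an intersection. What you need is genuine malnormality of $P_k$. Given almost malnormality, Lemma \ref{lem:filling_finite_order} and torsion-freeness of $\pi_1(B)$, this is equivalent to the unique-fixed-point statement. You leave it as the ``main obstacle'', with only a pointer to a Greendlinger-type argument.

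Your infinite-order case is correct but unnecessary. $Q_k$ contains the hyperbolic group $\pi_1(M_k)$ with finite index, so $Q_k$ is hyperbolic, and centralisers of non-elementary subgroups are finite. This is how the paper reduces everything to torsion elements, i.e.\ to exactly the step you did not prove.

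The paper closes this with a short geometric argument in $X_k$, using that $\delta$ is uniform in $k$ (Proposition \ref{pro:unif hyp}).
\begin{enumerate}
\item For $k$ large, every nontrivial $z=\gamma^m$ has a power that moves \emph{every} point of the horosphere of the horoball it stabilises by much more than $\delta$. For instance, $\langle z\rangle$ contains some $\gamma^n$ with $n$ at distance at least $k/4$ from $k\mb{Z}$.
\item Suppose $z$ fixed a second point $\xi\in\partial X_k$ besides the centre of that horoball. Since $z$ has finite order, $z$ and all its powers would move each point of a bi-infinite geodesic from $\xi$ to the centre by only $O(\delta)$.
\item Such a geodesic crosses the horosphere, contradicting the first step.
\end{enumerate}
So $z$ has a unique fixed point at infinity. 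Anything commuting with $z$ fixes that point, hence stabilises the corresponding horoball, i.e.\ lies in the corresponding conjugate of $P_k$. With this in hand, your finite-order argument goes through as written.
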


\begin{proof}
It is well-known that non-elementary subgroups of hyperbolic groups have finite centraliser. By Lemma \ref{lem:filling_finite_order}, finite-order elements in our case are parabolic. We now argue that all non-trivial finite-order parabolic elements of $\pi_1(M-B)/\langle\langle \gamma^k\rangle\rangle$ have exactly one fixed point in the Gromov boundary of $X_k$, and therefore they cannot centralise any subgroup not contained in the image of $\pi_1(U-B)$.
    
By Proposition \ref{pro:order_k}, we have to argue that $\gamma$ only has one fixed point in the boundary. This is because, for $k$ sufficiently large, $\gamma$ has a power that moves each point on the horosphere that it stabilises much more than the hyperbolicity constant of $X_k$ (which is uniform in $k$ by Proposition \ref{pro:unif hyp}). If $\gamma$ fixed more than one point in the boundary of $X_k$, then any of its powers would coarsely fix a geodesic line with one endpoint the limit point of the aforementioned horosphere. But this is not compatible with a power of $\gamma$ having large translation distance on the horosphere. 
\end{proof}

As a simple application of the above properties, we show here that the rotation $\rho_k$ has order exactly $k$ in ${\rm Out}(\pi_1(M_k))$ (not so clear a priori). Its proof is a prototypical example of how we combine properties of the Dehn filling $Q_k$ with the identification of the action of $\rho_k$ with a conjugation in $Q_k$. Similar arguments will appear in Section \ref{sec:proof main}.

\begin{cor}
\label{cor:order_k}
For all sufficiently large $k$, the outer automorphism represented by $\rho_k$ has order exactly $k$ in ${\rm Out}(\pi_1(M_k))$.
\end{cor}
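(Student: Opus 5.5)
We identify $\pi_1(M_k)$ with its image $G_k\lhd Q_k$ via Lemma \ref{lem:van kampen}. By Lemma \ref{lem:rotation is conjugation}, $\rho_k$ is represented by the restriction to $G_k$ of conjugation by $\gamma$, written $c_\gamma$. For any $m$, the $m$-th power $\rho_k^m$ is then represented by $c_{\gamma^m}|_{G_k}$. Since $\gamma^k=1$ in $Q_k$, $\rho_k^k$ is trivial, so the order of $[\rho_k]$ in ${\rm Out}(G_k)$ divides $k$. It remains to show that if $c_{\gamma^m}|_{G_k}$ is inner for some $0<m<k$, we reach a contradiction. This requires $k$ large, so that Proposition \ref{pro:order_k} and Lemma \ref{lem:centre} apply.

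Suppose $c_{\gamma^m}|_{G_k}=c_g|_{G_k}$ for some $g\in G_k$. Set $h:=g^{-1}\gamma^m\in Q_k$. Then $h$ centralises $G_k$. We claim $G_k$ is a non-elementary subgroup of $Q_k$ not conjugate into $P_k$. It is a finite-index (index $k$) subgroup of $Q_k$, and $Q_k$ acts on the hyperbolic space $X_k$ non-elementarily for large $k$: it contains the image of $\pi_1(M-B)$, whose loxodromics survive the filling. Any finite-index subgroup of a group acting non-elementarily is non-elementary.

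The subgroup $G_k$ is also not conjugate into $P_k$. Indeed, $P_k$ is the stabiliser of a single horoball $H_k$, so it fixes a boundary point. A conjugate of $P_k$ has the same property, whereas $G_k$ contains loxodromics with distinct fixed points. Alternatively, $P_k\simeq\pi_1(B)\times\mb{Z}/k\mb{Z}$ has infinite index in $Q_k$, since the cusped space action is non-elementary and the horoballs are not all of $X_k$. Hence by Lemma \ref{lem:centre} the centraliser of $G_k$ in $Q_k$ is trivial, so $h=1$, i.e. $\gamma^m=g\in G_k$.

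By the last part of Lemma \ref{lem:rotation is conjugation}, the image of $\gamma$ generates the cyclic quotient $Q_k/G_k\simeq\mb{Z}/k\mb{Z}$. So $\gamma^m\in G_k$ forces $k\mid m$, contradicting $0<m<k$. Therefore $[\rho_k]$ has order exactly $k$.

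Most of the argument is immediate from the earlier lemmas. The main point needing care is the non-elementarity of $G_k$ in the action on $X_k$, uniformly for large $k$, which is what lets us invoke Lemma \ref{lem:centre}. I would deduce it from the fact that $Q_k$ is non-elementary: it is hyperbolic relative to the proper subgroup $P_k$, with $\pi_1(M-B)$ non-elementary. That property passes to the finite-index subgroup $G_k$.
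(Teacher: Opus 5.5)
Your proof is correct and follows essentially the paper's argument. Both identify $\rho_k$ with conjugation by $\gamma$, show that $g^{-1}\gamma^m$ centralises $\pi_1(M_k)$ in $Q_k$ and is therefore trivial by Lemma \ref{lem:centre}, and conclude $k\mid m$; you additionally check the hypotheses of Lemma \ref{lem:centre} and finish using that $\gamma$ generates $Q_k/\pi_1(M_k)$ instead of using that $\gamma$ has order exactly $k$ (Proposition \ref{pro:order_k}), which is equally valid.
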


\begin{proof}
For convenience, we conflate $\rho_k$ with the automorphism it induces at the level of $\pi_1$. By Lemma \ref{lem:rotation is conjugation}, $\rho_k$ coincides with the restriction of the conjugation by some element $\gamma\in Q_k$, when $\pi_1(M_k)$ is seen as a subgroup of it (as in Lemma \ref{lem:van kampen}).  If $\rho_k^j$ is trivial in ${\rm Out}(\pi_1(M_k))$, this means that there exists $g\in \pi_1(M_k)$ such that conjugation by $g$ coincides with conjugation by $\gamma^j$ on $\pi_1(M_k)$. That is, $g^{-1}\gamma^j$ is contained in the centraliser of $\pi_1(M_k)$ in $Q_k$, which is trivial by Lemma \ref{lem:centre}. Since $\gamma$ has order exactly $k$ (Proposition \ref{pro:order_k}), we conclude that this happens if and only if $j$ is a multiple of $k$, and therefore $\rho_k$ has order exactly $k$.
\end{proof}

\section{Injections into $\pi_1(M_k)$}
\label{sec:actions_on_trees}

Throughout this section, $(M,B)$ is a GT-pair with wall $W$, where $M$ has dimension at least 3, and $M_k$ is the canonical degree $k$ branched cover associated with $M,W,B$.

Recall that we denote $Q_k=\pi_1(M-B)/\langle\langle \gamma^k\rangle\rangle$, where $\gamma$ is a generator of ${\rm ker}(\pi_1(U-B)\to\pi_1(U))$ and $U$ is a tubular neighborhood of $B$ in $M$ (Lemma \ref{lem:van kampen}). Recall also that $\pi_1(M-B)$ is relatively hyperbolic with respect to $\pi_1(U-B)$ (Theorem \ref{thm:rel hyp}) and $Q_k$ acts on what we denote by $X_k$, namely the quotient of a cusped space $X$ for the pair $(\pi_1(M-B),\pi_1(U-B))$ by $\langle\langle\gamma^k\rangle\rangle$. Crucially, there exist $\delta,k_0$ such that $X_k$ is $\delta$-hyperbolic for every $k\ge k_0$ (Proposition \ref{pro:unif hyp}). 

In this section, we show, roughly, that hyperbolic groups which do not split with respect to a suitable family of subgroups cannot have arbitrarily wild embeddings into the $Q_k$. In order to state this precisely, we need two definitions.

\begin{dfn}[Minimal Displacement]
Let $G$ be a group, generated by a finite set $S$, acting by isometries on $(Z,d_Z)$. For $z\in Z$ we define the {\em displacement of $S$ at $z$} as 
\[
{\rm displ}_{S,Z}(z)=\max_{s\in S} d_Z(z,sz).
\]
The {\em minimal displacement of $S$} is 
\[
{\rm min-displ}(S,Z):=\inf_{z\in Z}{\rm displ}_{S,Z}(z).
\]
\end{dfn}

\begin{dfn}[Locally Covers]
\label{dfn:locally covers}    
We say that a group $H$ \emph{locally covers $B$} if all finitely generated subgroups of $H$ are isomorphic to subgroups of $\pi_1(B)$.
\end{dfn} 

We prove the following. In the statement we include trivial splittings, so that any group splits over itself.

\begin{thm}
\label{thm:displ}
Let $(M,B)$ be a GT-pair of dimension at least 3. Also, let $G$ be a torsion-free hyperbolic group that does not split over any subgroup which locally covers $B$, and let $S$ be a finite generating set for $G$. Then there exists $D\geq 0$ such that for all $k\geq 1$ we have 
\[
{\rm min-displ}(\phi(S),X_k)\leq D
\]
for all injective homomorphisms $\phi:G\to Q_k$.
\end{thm}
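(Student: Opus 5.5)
We argue by contradiction, following the Bestvina--Paulin strategy used in \cite{DG18}. Suppose there are $k_n$ and injective homomorphisms $\phi_n:G\to Q_{k_n}$ with $\lambda_n:={\rm min-displ}(\phi_n(S),X_{k_n})\to\infty$. For the finitely many $k<k_0$ we still have to deal with individual $k$; in that case the same argument runs with $X_k$ fixed. Each $X_k$ is hyperbolic, and for $k<k_0$ the space $X_k$ is still a proper hyperbolic space quasi-isometric to a finite-index-type filling, so we may assume $k_n\ge k_0$. By Proposition \ref{pro:unif hyp} all $X_{k_n}$ are then $\delta$-hyperbolic for a uniform $\delta$. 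Choose basepoints $z_n$ almost realising $\lambda_n$, and rescale to $(X_{k_n},d/\lambda_n,z_n)$. An asymptotic cone (ultralimit) along a non-principal ultrafilter gives a $0$-hyperbolic space, hence an $\mathbb R$-tree $T$. The group $G$ acts on $T$ without a global fixed point, because the rescaled minimal displacement is $1$. We pass to the minimal $G$-invariant subtree.

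The core of the proof is controlling arc stabilisers, so that Rips theory (in the form used by \cite{DG18}, e.g.\ Guirardel's version for stable actions) produces a splitting of $G$ over a subgroup locally covering $B$, which contradicts the hypothesis. The first step is to show that stabilisers of non-degenerate arcs are either trivial or tied to horoballs. Take $g$ fixing an arc $[a,b]$ of $T$. Then $\phi_n(g)$ coarsely fixes a segment of length $\asymp\lambda_n$ in $X_{k_n}$. Uniform hyperbolicity, together with the acylindricity-type behaviour of Dehn fillings away from horoballs, shows that the segment must spend most of its length deep inside a single horoball $H\in\mathcal{H}_{k_n}$. It follows that $\phi_n(g)$ lies in the corresponding conjugate of $P_{k_n}=\pi_1(B)\times\mathbb Z/k_n$. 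Alternatively, if the segment stays in the thick part, then $g$ has small displacement there, and since $G$ is torsion-free this forces $g=1$ for $\omega$-almost every $n$. Since $\phi_n$ is injective and $G$ is torsion-free, the intersection of $\phi_n(G)$ with a conjugate of $P_k$ embeds into $P_k$ and is torsion-free. Its image in $\pi_1(B)$ is then injective, because the kernel of $P_k\to\pi_1(B)$ is finite. So finitely generated subgroups of arc stabilisers embed in $\pi_1(B)$, i.e.\ arc stabilisers locally cover $B$. The same reasoning should show that stabilisers of tripods are trivial or small, and that the action is stable in the sense needed for Rips theory. This is where the hypothesis that $G$ is hyperbolic enters: ascending chains of stabilisers are controlled, because the stabilisers embed in $\pi_1(B)$, a hyperbolic group.

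I expect the main obstacle to be that arc stabilisers can be large. They are subgroups of $\pi_1(B)$ rather than virtually cyclic, so the action need not be superstable, and the degenerations are of several kinds. I would split into cases according to how the rescaled horoballs behave in the limit.
\begin{enumerate}
\item If the limiting horoball pieces have bounded rescaled size, the limit is tree-graded in the sense of \cite{DS}. The pieces are collapsed, and the resulting action on a simplicial or $\mathbb R$-tree has stabilisers inside conjugates of $P_k$.
\item If horoballs grow at scale $\lambda_n$, we analyse how limits of horoballs are arranged in $T$, and extract a simplicial tree whose vertices correspond to limit horoballs.
\item If the horoballs are irrelevant at scale $\lambda_n$, we instead take limits of coned-off graphs, where each horoball is coned off, to obtain a tree with elliptic parabolic subgroups.
\end{enumerate}
In each case we aim for a nontrivial $G$-action on a tree whose edge stabilisers are, up to taking finitely generated subgroups, embedded in $\pi_1(B)$. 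This yields a splitting of $G$ over a subgroup locally covering $B$, contradicting the hypothesis, and therefore a uniform bound $D$.
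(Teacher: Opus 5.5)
The gap is in the passage from the limit $\mathbb{R}$-tree to a splitting. What you establish, and only roughly, is that arc stabilisers locally cover $B$. You then assert stability and invoke Rips theory, and for $d\geq 4$ neither step goes through.

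First, embedding in the hyperbolic group $\pi_1(B)$ gives no chain condition. $\pi_1(B)$ contains non-abelian free groups, hence strictly increasing chains of finitely generated subgroups. Stabilisers of arcs inside limit horoballs are not even known to be finitely generated.

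Second, and more seriously, even for a stable action the Rips machine (Bestvina--Feighn or Guirardel) in general only produces a splitting over a cyclic extension of an arc stabiliser $E$. If $E$ is, say, a finite-index subgroup of $\pi_1(B)$, then an $E$-by-$\mathbb{Z}$ group has cohomological dimension $d-1$ and does not locally cover $B$. So you get no contradiction with the hypothesis on $G$.

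Hence the large peripheral stabilisers must never be fed into the Rips machine. It may only be applied to trees whose arc stabilisers are trivial or cyclic, since their cyclic extensions are again cyclic in the torsion-free hyperbolic group $G$. The peripheral part must instead be turned directly into a simplicial tree, where Bass--Serre theory gives edge groups that locally cover $B$ without any extension.

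To achieve this separation you are missing two ingredients.

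The first is the correct stabiliser lemma: an arc not contained in a limit horoball has trivial or maximal cyclic stabiliser. This uses the argument of \cite[Lemma 5.11]{DG18} for elements not conjugate into $P_k$. It also uses the fact that a uniformly bounded power of any nontrivial element of $P_k$ moves each point at least its distance to the horoball, which forces an arc fixed by a parabolic element to lie in a limit horoball. Your claim that an element fixing an arc in the thick part must be trivial is false: if $\phi_n(g)$ is loxodromic with translation length $o(\lambda_n)$ and its axis approximates the arc, then $g$ fixes that arc.

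The second is an exhaustive case division according to how the minimal subtree meets the limit horoballs. These are closed convex subtrees that pairwise meet in at most one point.
\begin{enumerate}
\item If some arc meets each of them in at most a point, collapse the pieces of the tree-graded structure \cite{DS}. This gives a nontrivial $\mathbb{R}$-tree with trivial or cyclic arc stabilisers, to which Bestvina--Feighn applies.
\item If every arc is covered by finitely many limit horoballs, the incidence graph of horoballs and their intersection points is a simplicial tree. Its edge groups lie in horoball stabilisers, so they locally cover $B$.
\item If horoball intersections are dense along arcs but some arc is not finitely covered, neither construction works. One passes to coned-off spaces and proves uniform acylindricity. Divergence holds because the arc crosses infinitely many limit horoballs. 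Then \cite{GrovesHull} and \cite{Guirardel} apply.
\end{enumerate}
Your three cases are not well defined and not shown to be exhaustive. They also put coning off in the wrong regime: it is needed exactly when horoballs are dense, not when they are irrelevant.

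Finally, your reduction to $k\geq k_0$ is imprecise, since $X_k$ is not known to be hyperbolic for small $k$. Instead, work in a Cayley graph of the hyperbolic group $Q_k$. There a Delzant-type bound applies because $G$ has no cyclic splittings, and you can push forward to $X_k$ by a coarsely Lipschitz equivariant map.
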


The following consequence of the theorem is what we will actually use in our applications. Recall that we have an order-$k$ rotation $\rho_k:M_k\to M_k$ (Definition \ref{dfn:branched cover}). In the statement below, we conflate $\rho_k$ with the automorphism of $\pi_1(M_k)$ that it induces.

\begin{cor}
\label{cor:injections}
Suppose that $G$ and $(M,B)$ are as in Theorem \ref{thm:displ}. Then there exists $N>0$ such that for every $k$ there are at most $N$ injective homomorphisms $\phi:G\to\pi_1(M_k)$ up to postcomposition by a conjugation and an element of $\langle\rho_k\rangle$.
\end{cor}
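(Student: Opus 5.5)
Here is the plan. We deduce Corollary \ref{cor:injections} from Theorem \ref{thm:displ} by a finiteness argument. Uniformity in $k$ comes from the cocompactness of the action of $\pi_1(M-B)$ on $X$ minus the horoballs, together with the uniform local structure of the $X_k$.

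First we handle small $k$. For $k<k_0$ (with $k_0$ large enough that Proposition \ref{pro:unif hyp}, Proposition \ref{pro:order_k} and Lemma \ref{lem:centre} all apply), there are finitely many values of $k$. For each one we apply Theorem \ref{thm:displ} to the fixed group $Q_k$ acting on $X_k$. This gives finitely many conjugacy classes of injections: points of $X_k$ are moved into a compact fundamental region, since the thin parts are treated as below, and there are finitely many group elements of bounded displacement from a compact set. So assume $k\ge k_0$.

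Now fix an injective $\phi:G\to\pi_1(M_k)<Q_k$. By Theorem \ref{thm:displ} there is $z\in X_k$ with $d(z,\phi(s)z)\le D$ for all $s\in S$. We conjugate within $Q_k$ so that $z$ lies in a fixed compact set $K$. There are two cases.
\begin{itemize}
\item If $z$ lies outside the horoballs in $\mc{H}_k$, cocompactness of the thick part (which descends from $X$) lets us translate $z$ into the image $K$ of a fixed compact set $K_0\subset X$.
\item If $z$ is deep in a horoball $H_k$, then the elements $\phi(s)$ move $z$ a bounded amount. For $z$ sufficiently deep, they therefore all lie in ${\rm Stab}(H_k)\cong P_k$, because the distance from $z$ to $gH_k$ is large for every $g\notin {\rm Stab}(H_k)$. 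Then $\phi(G)$ would be conjugate into $P_k\cong\pi_1(B)\times\mb{Z}/k$. Being torsion-free, it would embed into $\pi_1(B)$, so $G$ would be isomorphic to a subgroup of $\pi_1(B)$. In particular $G$ locally covers $B$ and splits trivially over itself, contradicting the hypothesis.
\end{itemize}
Hence $z$ has bounded depth, and we may take $z\in K$ at bounded distance from the thick part.

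Next we count. Every $\phi(s)$ then moves a point of $K$ at most $D$. Lifting to $X$, the set of elements of $\pi_1(M-B)$ moving $K_0$ within distance $D+C$ is finite, by properness of the action on $X$ (the horoballs being locally finite graphs). Its image in $Q_k$ is a set $F_k$ of size at most a constant $F$ independent of $k$. The homomorphism $\phi$ is determined by the tuple $(\phi(s))_{s\in S}\in F_k^{S}$, so there are at most $F^{|S|}$ injections up to conjugation by $Q_k$. A subtlety is whether the distance in $X_k$ is realized by lifts in $X$. Since $X\to X_k$ is a quotient by a group action, $d_{X_k}(\pi_k x,\pi_k y)=\min_n d_X(x,ny)$, so lifting works.

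Finally, we pass from $Q_k$-conjugacy to conjugation in $\pi_1(M_k)$ combined with powers of $\rho_k$. Since $\pi_1(M_k)$ is normal in $Q_k$ with quotient generated by $\gamma$ (Lemma \ref{lem:van kampen} and Lemma \ref{lem:rotation is conjugation}), every $q\in Q_k$ can be written $q=g\gamma^j$ with $g\in\pi_1(M_k)$. Conjugation by $q$ restricted to $\pi_1(M_k)$ is therefore an inner automorphism composed with $\rho_k^j$, up to the inner representative. So $N=F^{|S|}$ works, maximized over the finitely many small $k$. The main obstacle I expect is the uniformity of $F$ in $k$, in particular controlling the horoball depth so that the compact set $K_0$ and the constant $C$ do not depend on $k$. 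I would handle this via the uniform $\delta$ and the fact that horoballs of $X_k$ are quotients of those of $X$, so that geometry at bounded depth is uniformly controlled.
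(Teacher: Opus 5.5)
Your proposal is correct and follows essentially the same route as the paper. You use Theorem \ref{thm:displ} to get a point of bounded displacement, rule out that it lies deep in a horoball via precise invariance and ${\rm Stab}(H_k)\cong\pi_1(B)\times\mb{Z}/k\mb{Z}$, conjugate it into a fixed region near the thick part, bound the lifts to $\pi_1(M-B)$ uniformly in $k$, and finally split $Q_k$-conjugation into $\pi_1(M_k)$-conjugation composed with a power of $\rho_k$. The differences are minor. You bound the lifts by properness of the $\pi_1(M-B)$-action on the locally finite space $X$, using $d_{X_k}(\pi_k x,\pi_k y)=\min_n d_X(x,ny)$, rather than by Milnor--\v{S}varc on the truncated space $Y$; this avoids the paper's comparison of the intrinsic metric of $Y_k$ with that of $X_k$. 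Your separate treatment of small $k$ is superfluous, since none of your constants depend on $k$.
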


\begin{proof}
By Lemma \ref{lem:van kampen}, we can regard $\phi$ as an injective homomorphism $G\to Q_k$. Consider a point $z_k\in X_k$ realising the minimal displacement up to an error of 1.

Recall from Section \ref{sec:rel hyp} that $X$ has a precisely invariant collection $\mc{H}$ of horoballs $H\subset X$, and that it descends to a $Q_k$-precisely invariant collection $\mc{H}_k$ in $X_k$. The stabiliser ${\rm Stab}(H_k)$ of $H_k$ in $Q_k$ is conjugate to the projection $P_k$ of $\pi_1(U-B)<\pi_1(M-B)$ in $Q_k$ which is isomorphic to $\pi_1(B)\times\mb{Z}/k\mb{Z}$. Set 
\[
Y:=X-\bigcup_{H\in\mc{H}}{H}\quad\text{\rm and}\quad Y_k:=X_k-\bigcup_{H_k\in\mc{H}_k}{H_k}
\]
and recall that $Y/\pi_1(M-B)=Y_k/Q_k$ is compact. 

A point $z_k\in X_k$ realising the minimal displacement up to an error of 1 cannot lie deeper than $D+1$ into the quotient $H_k$ of a horoball $H\in\mc{H}$, for otherwise all elements of $\phi(s)$ with $s\in S$ would satisfy $\phi(s)H_k\cap H_k\neq\emptyset$ and, hence, $\phi(s)\in{\rm Stab}(H_k)$ (as $\mc{H}_k$ is $Q_k$-precisely invariant). In particular, we would have that $G$ is isomorphic to $\phi(G)<{\rm Stab}(H_k)$. This is not possible as we now explain. Since $G$ is torsion-free and ${\rm Stab}(H_k)$ is isomorphic to $\pi_1(B)\times\mb{Z}/k\mb{Z}$, the restriction of the first factor projection ${\rm Stab}(H_k)\to\pi_1(B)$ to $\phi(G)$ is injective. Hence, $G$ is isomorphic to a subgroup of $\pi_1(B)$. This violates the fact that $G$ does not locally cover $B$ (in fact, in this case, $G$ trivially splits over itself).
    
Therefore, a point $z_k\in X_k$ realising the minimal displacement up to an error of 1 lies in the $(D+1)$-neighborhood of $Y_k$. The proof can now be completed as in \cite{DG18}. 

Up to conjugation, we can assume that $d_{X_k}(z_k,o_k)\le A$ where $o_k\in Y_k$ is the projection of a fixed basepoint $o\in Y$ and $A$ is a uniform constant independent of $k$ (it can be taken to be the diameter of $Y/\pi_1(M-B)$). Fix a word norm $|\cdot|$ on $\pi_1(M-B)$. By Milnor-Švarc, there exists $C>0$ such that $d_Y(o,g o)\ge|g|/C-C$ for every $g\in\pi_1(M-B)$ (here we equip $Y$ with its intrinsic path metric).

Now, on the one hand, we have (again, $Y_k$ is equipped with its intrinsic path metric) 
\[
d_{Y_k}(o_k,\phi(s)o_k)\le d_{X_k}(z_k,\phi(s)z_k)+2d_{X_k}(o_k,z_k)\le D+1+2A.
\]
On the other hand, we also have 
\[
d_{Y_k}(o_k,\phi(s)o_k)=\inf_{g_S\in\pi_1(M-B)\,\text{\rm lift of }\phi(s)}\{d_Y(o_k,g_So)\}\ge\inf_{g_S\in\pi_1(M-B)\,\text{\rm lift of }\phi(s)}\{|g_S|/C-C\}.
\]
Thus, for every $s\in S$ there exists a lift $g_S\in\pi_1(M-B)$ of $\phi(s)$ with uniformly bounded word norm $|g_S|\le C(D+1+2A+C)$ (independent of $k$). This immediately implies that, up to composition with a conjugation in $Q_k$, there is a uniform bound (independent of $k$) on the number of injective homomorphisms $\phi:G\to\pi_1(M_k)$.

To conclude, we need one last observation to clarify how conjugations by elements in $Q_k$ can be decomposed in conjugations in $\pi_1(M_k)$ and rotations. Since $\pi_1(M_k)<Q_k$ is normal (Lemma \ref{lem:van kampen}) and $\pi_1(M_k),\gamma$ generate $Q_k$ (Lemma \ref{lem:rotation is conjugation}), every element $\alpha\in Q_k$ can be written as $\alpha=\gamma^rg$ with $g\in\pi_1(M_k)$. Thus, the conjugation $c_\alpha$ is the composition of the conjugation by $\gamma^r$ and the conjugation by $g\in\pi_1(M_k)$. By Lemma \ref{lem:rotation is conjugation}, the conjugation by $\gamma^r$ coincides with $\rho_k^r$ on $\pi_1(M_k)$.
\end{proof}

The proof of Theorem \ref{thm:displ} will occupy the rest of the section.

For convenience of the reader, we will break it up into small steps.

\subsection{Reduction to large $k$}
Let $G$ be a group as in the statement of Theorem \ref{thm:displ} and $S\subset G$ a finite generating set. 

First of all, we argue that for a fixed $k$, there exists some $D_k$ such that
\[
{\rm min-displ}(\phi(S),X_k)\leq D_k
\]
for all injective homomorphisms $\phi:G\to Q_k$. Thus, the same holds for any fixed collection of finitely many $k$. 

Let us explain the argument for a fixed $k$. Note that $Q_k$ is hyperbolic since by Lemma \ref{lem:van kampen} it contains the finite-index subgroup $\pi_1(M_k)$ which is hyperbolic by Corollary \ref{cor:gromov hyperbolic}. In particular, the action of $Q_k$ on any of its Cayley graphs is acylindrical. 

We apply \cite[Theorem 1.2]{fin_pres_MCG} (which gives a more precise version of \cite{Delzant} even in the case of hyperbolic groups) with input $\Gamma=G$ and $A=S$ and the action of $G$ on a fixed Cayley graph $\mc{G}$ of $Q_k$ induced by $\phi$. We obtain as output a point $x\in\mc{G}$ and an element $\theta\in{\rm Mod}(G)$ such that $d_{\mc{G}}(x,\phi(\theta(s))x)\le D_k$ for every $s\in S$ where $D_k>0$ is a constant only depending on $G,S$ and the action $Q_k\curvearrowright\mc{G}$. The group ${\rm Mod}(G)$ is a particular subgroup of ${\rm Aut}(G)$. We do not need to describe it precisely, as we only need to know that it coincides with the inner automorphism subgroup ${\rm Inn}(G)$ when $G$ does not admit splittings over infinite cyclic subgroups. By our assumptions on $G$, this is the case for us. Thus, we conclude that $d_{\mc{G}}(y,\phi(s)y)\le D_k$ for every $s\in S$ where $y=\phi(g)^{-1}x$ and $\theta$ is the conjugation by $g\in G$.

From the uniform bound on the displacement $d_{\mc{G}}(y,\phi(s)y)\le D_k$ for the action on the Cayley graph $\mc{G}$ of $Q_k$ it is not hard to deduce the analogous statement for the action on $X_k$, that is, $d_{X_k}(z,\phi(s)z)\le D'_k$ for every $s\in S$ (where $z\in X_k$ is some point and $D_k'$ is a constant only depending on $D_k$). This follows immediately from the existence of a coarsely Lipschitz $Q_k$-equivariant map $\mc{G}\to X_k$.

In view of the argument above, in the remainder of this proof, we can assume $k\geq k_0$, for $k_0$ as in Proposition \ref{pro:unif hyp}.

The proof is by contradiction. We assume that the hyperbolic group $G$ as in the statement of Theorem \ref{thm:displ} has a sequence of injective homomorphisms $\phi_n:G\to Q_{k(n)}$ for which the minimal displacements of $\phi_n(S)$ diverge as
\[
d_n={\rm min-displ}(\phi_n(S),X_{k(n)})\geq n.
\] 
We will show that such a group $G$ must admit a suitable splitting over a subgroup that locally covers $B$, thus violating our initial hypothesis.

\subsection{The limit $\mb{R}$-tree and its peripheral arcs}
    
In the first part, we closely follow the argument for \cite[Proposition 5.8]{DG18} and produce an action of $G$ on an $\mb{R}$-tree.

We choose $o_n\in X_{k(n)}$ realising the minimal displacement $d_n$ up to an error of 1.

Recall that the $X_{k(n)}$ are uniformly hyperbolic by Proposition \ref{pro:unif hyp}.

Under the assumption of unbounded minimal displacements $d_n\to\infty$, we can rescale their metrics by a factor $1/d_n$ and pass to an ultralimit (with respect to a fixed non-principal ultrafilter $\omega$ on $\mb{N}$) to get an action of $G$ on an $\mathbb R$-tree $(T,d_T)$ with no global fixed point
\[
G\curvearrowright\left(X_{k(n)},\frac{1}{d_n}d_{X_{k(n)}}\right)\longrightarrow G\curvearrowright\left(T,d_T\right).
\]

Up to passing to a subsequence, we have convergence in the equivariant Hausdorff-Gromov topology. This means the following.

\begin{dfn}[Equivariant Hausdorff-Gromov]
\label{dfn:hg conv}
A sequence of isometric actions $G\curvearrowright(X_n,d_{X_n})$ converges to an isometric action $G\curvearrowright(Z,d_Z)$ in the {\em equivariant Hausdorff-Gromov topology} if for every finite set $\{z_1,\cdots,z_k\}\subset Z$, finite subset $F\subset G$, and (small) parameter $\ep>0$ the following holds. For every $n$ large enough there are points $x_1^n,\cdots,x_k^n\in X_n$ such that for every $g\in F$ and every $i,j$ we have
\[
|d_T(x_i,gx_j)-d_{X_n}(x_i^n,gx_j^n)|\le\ep.
\]
\end{dfn}

So far, all this is exactly as in \cite{DG18}. (At the corresponding point of their proof, Dahmani-Guirardel argue that peripherals fix points in trees, but for us, the domain for $\phi$ is hyperbolic, so it has no peripherals.)

In order to analyse the action $G\curvearrowright T$, it is crucial to understand the arc stabilisers. To this purpose, we single out a distinguished class of arcs which will be quite useful in our analysis. 

\begin{dfn}[Limit Horoball]
\label{dfn:lim hor}
Recall that every $X_k$ comes with a distinguished family of horoballs $\mc{H}_k$ (see Section \ref{sec:rel hyp}). By the ultralimit construction, each sequence 
\[
\left(H_n\in \mc{H}_{k(n)}\right)_{n\in\mb{N}}\quad\text{\rm with}\quad\sup_{n\in\mb{N}}{d_{X_{k(n)}}(o_n,H_n)/d_n}<\infty
\]
gives rise to a subset $H$ of $T$ which we call a {\em limit horoball}. We denote by $\mathcal P$ the collection of all these limit horoballs.
\end{dfn}

\begin{dfn}[Peripheral Arc]
We call an arc $\alpha\subset T$ {\em peripheral} if it is contained in a limit horoball $H\in\mc{P}$ (as in Definition \ref{dfn:lim hor}).
\end{dfn}

Limit horoballs have the following key properties.

\begin{lemma}
\label{lem:hor_inters}
Limit horoballs are closed, convex, and any two distinct ones intersect in at most one point.
\end{lemma}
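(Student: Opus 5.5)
The plan is to transfer to the limit tree $T$ three properties of the horoballs $H_n\in\mc{H}_{k(n)}$ that hold uniformly in $n$: they are quasiconvex with a uniform constant, their projections are coarsely Lipschitz, and distinct ones have bounded coarse intersection. For the last property I will use the small-cancellation structure of the fillings for $k\ge k_0$.

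\textbf{Closedness.} By definition, the limit horoball $H$ is the set of ultralimits $[(x_n)]$ with $x_n\in H_n$ and $d(o_n,x_n)/d_n$ bounded. Suppose $[(x^m_n)]\to [(y_n)]$ in $T$. Choose $x^m_n\in H_n$ with $d(x^m_n,y_n)\le \epsilon_m d_n$ $\omega$-almost surely, where $\epsilon_m\to 0$. A diagonal argument then produces $x_n\in H_n$ with $d(x_n,y_n)/d_n\to_\omega 0$. Hence $[(y_n)]=[(x_n)]\in H$. This is the standard fact that ultralimits of subsets are closed.

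\textbf{Convexity.} The horoballs in $X$ are uniformly quasiconvex. Their images in $X_k$ should remain $C$-quasiconvex with $C$ independent of $k\ge k_0$, using Proposition \ref{pro:unif hyp} together with the fact that $H_k$ is the quotient of a horoball by $\langle\gamma^k\rangle$; this is essentially how \cite{AGM09} builds $X_k$. A cleaner route avoids this: horoballs are convex in the combinatorial cusped space, and the local-to-global structure in \cite{GM} gives uniform quasiconvexity. Given $x,y\in H$ represented by $x_n,y_n\in H_n$, any geodesic $[x_n,y_n]$ then stays in the $C$-neighbourhood of $H_n$. After rescaling by $1/d_n$, the ultralimit geodesic, which is the unique arc $[x,y]$ in the tree $T$, lies in $H$.

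\textbf{Intersections.} This is the main step. Let $H\neq H'$ come from sequences $(H_n)$ and $(H'_n)$ with $H_n\neq H'_n$ $\omega$-a.s. It suffices to show that the coarse intersection $N_{10\delta}(H_n)\cap N_{10\delta}(H'_n)$ has diameter bounded by a constant $E$ independent of $n$. Granting this, suppose $x\neq y$ both lie in $H\cap H'$. By convexity $[x,y]\subset H\cap H'$. Approximating geodesics then stay within $C+o(d_n)$ of both $H_n$ and $H'_n$ over a length comparable to $d(x,y)d_n$. By uniform hyperbolicity this forces a coarse intersection of diameter tending to infinity, which is a contradiction.

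To bound the coarse intersection, I would first look at the cusped space, where two distinct horoballs have bounded coarse intersection; indeed in the combinatorial horoball model they are $1$-separated and their projections onto each other are uniformly bounded. This must then persist in the quotient $X_k$ for large $k$. The natural tool is the Greendlinger-type / Cartan-Hadamard local-to-global argument of \cite{DG18,AGM09}: a ball of radius $R$ in $X_k$ around a point of $Y_k$ embeds isometrically into $X$ when $k$ is large relative to $R$. So a large coarse intersection near the thick part would lift to a large coarse intersection in $X$.

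The difficulty lies deep inside the horoballs, where the depth is unbounded. There, two horoballs can only meet near their common boundary region, since points deep in $H_n$ are far from $Y_k$ and hence far from any other horoball. So the coarse intersection lies in a bounded neighbourhood of $Y_k\cap N(H_n)\cap N(H'_n)$, and the local embedding argument applies. I expect making this uniform in $k$ to be the main obstacle. If that fails, the fallback is to argue via the precise invariance of $\mc{H}_k$ and acylindricity-type estimates for the $Q_k$-action.
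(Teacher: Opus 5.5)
Your treatment of closedness and convexity matches the paper's. Your reduction of the intersection property to a uniform bound on coarse intersections is a valid, more hands-on substitute for the paper's appeal to \cite[Lemma 4.5]{DS}. Concretely, the quadrilateral with sides $[x_n,y_n]$ (within $C$ of $H_n$), $[x'_n,y'_n]$ (within $C$ of $H'_n$) and two sides of length $o(d_n)$ puts a segment of length roughly $d_T(x,y)\,d_n$ into $N_{C+2\delta}(H_n)\cap N_{C+2\delta}(H'_n)$. So the scale you need is $C+2\delta$ rather than $10\delta$. Also, horoballs in the cusped space are only uniformly quasiconvex, not convex, but quasiconvexity is all you use.

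The genuine gap is the uniform bound itself. It is the real content of the lemma, and you leave it as a plan with a self-declared obstacle and an unspecified fallback. As sketched, the lifting argument is also missing a step. An embedded ball has a fixed radius, so a large coarse intersection in $X_k$ only contradicts the picture in $X$ after you localize it. For this you need two facts.
\begin{enumerate}
\item[(i)] Coarse intersections of uniformly quasiconvex sets are coarsely connected. By the same quadrilateral trick, the middle of a geodesic between far-apart points of $N_R(H_k)\cap N_R(H'_k)$ lies in $N_{R'}(H_k)\cap N_{R'}(H'_k)$ for a slightly larger $R'$. Hence a piece of controlled but large enough diameter sits in one ball centred near $Y_k$.
\item[(ii)] Inside such a ball, the preimages of $H_k\neq H'_k$ are single, distinct horoballs of $X$.
\end{enumerate}
Granting the local embedding you quote, uniformity in $k$ is then not the obstacle you fear. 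The radius is fixed, so the embedding holds for all $k\geq k_1$. Each of the finitely many $k_0\le k<k_1$ can be handled on its own, since bounded coarse intersection of distinct horoballs is standard in a single cusped space of a relatively hyperbolic pair.

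The paper avoids lifting to $X$ altogether. It works inside one $X_k$, using only the uniform constant $\delta$ and the shape of geodesics in combinatorial horoballs \cite[Lemma 3.10]{GM}. Suppose $p_i,q_i\in H_i$ with $d(p_1,p_2),d(q_1,q_2)\le R$, and $d(p_2,q_2)$ exceeds a fixed linear function of $R$. Then $[p_2,q_2]$ reaches depth greater than $R+2\delta$ in $H_2$. Such a point cannot be $2\delta$-close to any of the other three sides of the quadrilateral $p_1q_1q_2p_2$, which contradicts thinness. This gives $\mathrm{diam}(N_R(H_1)\cap H_2)\le DR+D$ with $D$ independent of $k$ in one stroke. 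It is exactly the input for \cite[Lemma 4.5]{DS}, and it would equally close your direct argument. Replacing your lifting strategy by this intrinsic estimate is the simplest repair.
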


\begin{proof}
Each $H\in\mc{P}$ is automatically a complete subspace, hence closed. Convexity immediately follows from the fact that all $X_k$, for $k$ sufficiently large, are hyperbolic with the same hyperbolicity constant (Proposition \ref{pro:unif hyp}) and the fact that horoballs are quasi-convex with constant uniformly controlled by the hyperbolicity constant (see e.g. \cite[Lemma 3.10]{GM}). The intersection property follows from an application of \cite[Lemma 4.5]{DS}. To apply it, we need to check that the properties $(\alpha_1)$ and $(\alpha_2)$ of \cite[Lemma 4.5]{DS} are satisfied in our setting. Both of them hold by the following claim, which is well-known within a single cusped space $X_k$ (see e.g. \cite[Lemma 3.1]{MS:qhyp}). Here, however, we need constants independent of $k$, so we provide an argument.

\begin{claim}
There exists $D>0$ such that for all $R>0$ the following holds. For any two distinct horoballs $H_1, H_2$ in some $X_k$, we have that $N_R(H_1)\cap H_2$ has diameter at most $DR+D$.
\end{claim} 

\begin{proof}
It suffices to prove the same statement replacing $H_1$, $H_2$ with the convex sub-horoballs described above. Consider points $p_i,q_i\in H_i$ with $d_{X_k}(p_1,p_2),d_{X_k}(q_1,q_2)\leq R$. It follows easily from the description of geodesics in horoballs, see \cite[Lemma 3.10]{GM}, that if $d_{X_k}(p_2,q_2)>10R+10\delta+10$, then a geodesic from $p_2$ to $q_2$ contains a point at height larger than $R+2\delta$. Such a point cannot be $2\delta$ close to another side of a quadrangle with vertices $p_1,q_1,q_2,p_2$, contradicting $\delta$-hyperbolicity. Therefore, any $p_2,q_2$ as above will have $d_{X_k}(p_2,q_2)\leq 10R+10\delta+10$, as required.
\end{proof}

This finishes the proof of the lemma.
\end{proof}

Peripheral arcs are the only source of potential pathological behaviors of the arc stabilisers. In fact, we have the following. 

\begin{lemma}
\label{lem:stabs_in T}
Stabilisers of non-trivial non-peripheral arcs in $T$ are either trivial or maximal cyclic subgroups of $G$.
\end{lemma}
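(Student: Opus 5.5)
We follow the strategy of Dahmani--Guirardel \cite[Proposition 5.8]{DG18}, itself modelled on the classical argument for limits of actions on uniformly hyperbolic spaces. The only new input is uniformity in $k$, which comes from Proposition \ref{pro:unif hyp}. Let $\alpha=[a,b]\subset T$ be a non-trivial non-peripheral arc, and let $g,h\in G$ fix $\alpha$ pointwise. Represent $a,b$ by sequences $a_n,b_n\in X_{k(n)}$ with $d_{X_{k(n)}}(a_n,b_n)\sim \ell d_n$, where $\ell=d_T(a,b)>0$. Pointwise fixing of $\alpha$ means that $d(a_n,\phi_n(g)a_n)$ and $d(b_n,\phi_n(g)b_n)$ are $o(d_n)$, and likewise for $h$.

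\emph{Step 1: small displacement along a long middle subsegment.} By uniform $\delta$-hyperbolicity, $\phi_n(g)$ and $\phi_n(h)$ move every point of a middle subsegment $\sigma_n$ of $[a_n,b_n]$ by $o(d_n)$. The subsegment $\sigma_n$ has length $\ge \ell d_n/2$. Using thinness of quadrilaterals, we refine this to: along a long subsegment of $\sigma_n$, the elements $\phi_n(g)$, $\phi_n(h)$, and also their commutator $\phi_n([g,h])$ all move points by $O(\delta)$ plus a translation along the segment of size $o(d_n)$. This is the standard ``coarse common axis'' argument.

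\emph{Step 2: the segment stays in the thick part.} Non-peripherality is used here. If, for $\omega$-almost every $n$, the subsegment spent most of its length (a definite proportion of $d_n$) deep inside horoballs $H_n\in\mc H_{k(n)}$, then by convexity of horoballs and Lemma \ref{lem:hor_inters} the limit arc would be contained in a limit horoball. We first shrink $\alpha$ to a subarc if necessary; this is allowed because the stabiliser of $\alpha$ is contained in that of any subarc. Since subarcs of peripheral arcs are peripheral, we may assume no subarc of $\alpha$ is peripheral. We then find, on $\sigma_n$, $\gg 1$ points within bounded distance $R$ of the thick part $Y_{k(n)}$, spread along a length comparable to $d_n$.

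\emph{Step 3: acylindricity and conclusion.} The action of $Q_k$ on the thick part is uniformly cocompact: $Y_k/Q_k=Y/\pi_1(M-B)$. Combined with uniform hyperbolicity, this gives uniformly acylindrical behaviour of $Q_k$ on $X_k$ away from the horoballs. Dehn filling theory (\cite{GM,DG18}) supplies this with constants independent of $k$ for $k\ge k_0$. Hence only boundedly many elements move two far-apart thick points by a bounded amount. Applying this to the commutators $[\phi_n(g)^{p},\phi_n(h)^{q}]$, or via a pigeonhole argument on the elements $\phi_n(g^ih^j)$ for many $i,j$, we obtain that $\langle g,h\rangle$ is virtually cyclic, as follows. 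The stabiliser $\mathrm{Stab}(\alpha)$ then has image under $\phi_n$ that is elementary and loxodromic (it is not parabolic, since we are in the thick part). It is therefore virtually cyclic, hence cyclic because $G$ is torsion-free, and $\phi_n$ is injective. Maximality follows from the same argument: if $x^m\in\mathrm{Stab}(\alpha)$ then $x$ commutes with a loxodromic element and preserves its axis. The axis of $x^m$ in $T$ must contain $\alpha$ or $\alpha$ is fixed; root-closure is handled as in \cite[Lemma 5.10]{DG18} using the fact that the stabilised segments are long.

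The main obstacle is Step 2/3: obtaining acylindricity constants uniform in $k$ and ruling out elements of $P_k$ (including torsion powers of $\gamma$) fixing long thick subsegments. We would handle this with Lemma \ref{lem:filling_finite_order} together with the large-translation property of $\gamma$ on horospheres used in Lemma \ref{lem:centre}.
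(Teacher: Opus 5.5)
There is a genuine gap, and it sits where the real work of this lemma lies: ruling out that a non-trivial $h\in{\rm Stab}([a,b])$ has $\phi_n(h)$ conjugate into $P_{k(n)}$ for $\omega$-almost every $n$.

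\textbf{The parabolic case.} You dispose of this with ``it is not parabolic, since we are in the thick part''. That is not a valid reason. $Y_{k(n)}$ contains the boundary horospheres, on which ${\rm Stab}(H_n)$ acts, so parabolic elements can move thick points by bounded amounts. Thickness alone gives no contradiction. What excludes parabolicity is non-peripherality of the whole arc, measured at scale $d_n$.

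The tools you propose do not help either. Lemma \ref{lem:filling_finite_order} and the large translation of powers of $\gamma$ used in Lemma \ref{lem:centre} concern torsion elements. $\phi_n(G)$ contains none, since $G$ is torsion-free and $\phi_n$ is injective.

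The missing ingredient is a displacement estimate, uniform in $k$, for infinite-order parabolics. There is $C$ such that for every infinite-order $q\in P_k$ and every $x\in X_k$, some $q^j$ with $|j|\le C$ satisfies $d_{X_k}(x,q^jx)\ge d_{X_k}(x,H_k)$. The reason is that a bounded power of $q$ moves every horosphere point by much more than $\delta$, so the geodesic $[x,q^jx]$ crosses $H_k$. This comes from the non-trivial $\pi_1(B)$-component of $q$, not from $\gamma$.

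Apply this to approximants $x_n$ of any $x\in[a,b]$; these are moved by $o(d_n)$ by $\phi_n(h)^j$ for $|j|\le C$. This gives $d_{X_{k(n)}}(x_n,H_n)=o(d_n)$, where $H_n$ is the horoball stabilised by $\phi_n(h)$. Hence every point of $[a,b]$ lies in the limit horoball defined by $(H_n)$, so $[a,b]$ is peripheral, a contradiction. This is the paper's Case (2).

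Once this exclusion is in place, $\phi_n(h)$ is loxodromic. Your Steps 1 and 3 then amount to the argument of \cite[Lemma 5.11]{DG18}, which is also what the paper invokes, replacing ``loxodromics do not stabilise horoballs'' by ``elements not conjugate into $P_{k(n)}$ do not stabilise horoballs''. Your maximality argument (``$x$ commutes with a loxodromic element'') also presupposes this exclusion.

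\textbf{The reduction in Step 2.} Step 2 contains a separate logical error. From ``subarcs of peripheral arcs are peripheral'' you cannot conclude that a non-peripheral arc has a subarc with no peripheral subarcs. For example:
\begin{itemize}
\item a non-peripheral arc can be the union of two peripheral arcs lying in distinct limit horoballs;
\item in scenario \ref{item:case2}, every arc of $T_0$ is a finite union of peripheral arcs;
\item in scenario \ref{item:case3}, peripheral subarcs are dense.
\end{itemize}
The lemma must cover such arcs, for instance the minimal arcs used in Proposition \ref{prop:pseudo-metric}. Shrinking would in any case only bound the stabiliser from above and lose maximality.

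What non-peripherality actually gives is weaker. For some $\eta>0$, $[a_n,b_n]$ meets $Y_{k(n)}$ at a point at distance at least $\eta d_n$ from both endpoints. Otherwise the middle part of $[a_n,b_n]$ lies in a single horoball, since one can only pass between horoballs through $Y_{k(n)}$. Letting $\eta\to 0$ and using Lemma \ref{lem:hor_inters}, $[a,b]$ would then lie in one limit horoball. Your claim of $\gg 1$ thick points spread over a length comparable to $d_n$ is unjustified, and the acylindricity step has to be run from this weaker information.
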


\begin{proof}
Consider a non-trivial non-peripheral arc $[a,b]$ in $T$, and assume that its stabiliser ${\rm Stab}([a,b])$ is non-trivial. Consider a non-trivial element $h\in {\rm Stab}([a,b])$. Note that $h$ has infinite order (since $G$ is torsion-free). There are two cases.

Denote by $P_{k(n)}$ the image of $\pi_1(U-B)$ in $Q_{k(n)}$ (see Section \ref{sec:rel hyp}).

{\slshape Case (1)}. For infinitely many $n$, the element $\phi_n(h)$ is not conjugate into $P_{k(n)}$. 

In this case, we follow word-for-word the proof of \cite[Lemma 5.11]{DG18}, with a single modification. In the proof, Dahmani and Guirardel use the fact that no loxodromic element can stabilise a horoball in the cusped spaces they consider. We replace this fact with the following: No element which is not conjugate into $P_{k(n)}$ (such as $\phi_n(h)$, by our assumption) can stabilise a horoball of $\mc{H}_{k(n)}$ in $X_{k(n)}$ (see Section \ref{sec:rel hyp}).

{\slshape Case (2)}. For infinitely many $n$, the element $\phi_n(h)$ is conjugate into $P_{k(n)}$. 

We actually exclude this case. The key property to rule it out is the following. 

\begin{claim}
There exists $C>0$ such that for all sufficiently large $k$ the following holds. Let $H_k\subset X_k$ be the horoball stabilised by $P_k$. For all $g\in P_k$ non-trivial and $x\in X_k$, we have $d_{X_k}(x,g^jx)\geq d_{X_k}(x,H_k)$ for some $j$ with $|j|\leq C$.
\end{claim}

\begin{proof}
If $k$ is sufficiently large, we can find a uniformly bounded power $g^j$ of $g$ that translates every point of the boundary horosphere of $H_k$ a distance much larger than the hyperbolicity constant of $X_k$ (which, recall, is bounded independently of $k$ by Proposition \ref{pro:unif hyp}). Considering an ideal triangle with endpoints $x,g^jx$, and the point at infinity for $H_k$, we then see that some geodesic from $x$ to $g^jx$ passes through $H_k$. The conclusion follows.
\end{proof}

We can now rule out Case (2). Fix $\ep>0$. Consider a point $x\in[a,b]$. For $n$ sufficiently large, choose approximating elements $x_n\in X_{k(n)}$ such that for every $|j|\le 2C$ (where $C$ is as in the above claim) we have
\[
\left|\frac{1}{d_n}d_{X_{k(n)}}(x_n,\phi_n(h)^jx_n)-d_T(x,h^jx)\right|<\ep
\]
as provided by the equivariant Hausdorff-Gromov convergence (see Definition \ref{dfn:hg conv}). Note that $d_T(x,h^{2C}x)=0$ as $h^{2C}$ fixes pointwise $[a,b]$. Thus
\[
d_{X_{k(n)}}(x_n,\phi_n(h)^{2C}x_n)/d_n<\ep
\]
By assumption, $\phi_n(h)$ stabilises a horoball $H_n\in\mc{H}_{k(n)}$. Hence, by the claim, we have
\[
d_{X_{k(n)}}(x_n,H_n)/d_n\le d_{X_{k(n)}}(x_n,\phi_n(h)^{2C}x_n)/d_n<\ep.
\]
This implies that the sequence of horoballs $H_n$ stabilised by the $\phi_n(h)$'s defines a limit horoball $H$ (see Definition \ref{dfn:lim hor}) which is invariant under $h$. Moreover, it also implies that $d_T(x,H)\le\ep$. As $\ep$ is arbitrary and $H$ is closed, we get that $x\in H$. As $x\in[a,b]$ was arbitrary, we deduce that $[a,b]\subset H$, which contradicts the fact that $[a,b]$ is non-peripheral. This concludes the proof of the lemma.
\end{proof}

We denote by $T_0$ the minimal subtree for $T$, meaning the smallest closed non-empty subtree invariant under the $G$-action. There are 3 possible scenarios. Recall that $\mathcal P$ denotes the collection of all limit horoballs.

\begin{enumerate}[label=(\arabic*)]
    \item\label{item:case1} {\bf Single intersection}. There exists a non-trivial arc $\alpha$ in $T_0$ such that for every $H\in\mathcal P$
    \[
    \alpha\cap H
    \]
    is either empty or a single point.
    \item\label{item:case2} {\bf Finitely covered arcs}. For every non-trivial arc $\alpha$ of $T_0$ there are $H_1,\cdots,H_r\in\mc{P}$ such that
    \[
    \alpha\subset H_1\cup\cdots\cup H_r.
    \]
    \item\label{item:case3} {\bf Generically dense with a non-finitely covered arc}. For any arc $\alpha$ in $T_0$, we have that 
    \[
    \bigcup_{H\in \mathcal P: |H\cap \alpha|>1} H\cap \alpha
    \]
    is dense in $\alpha$, but there exists a non-trivial arc of $T_0$ not covered by finitely many elements of $\mathcal P$.
\end{enumerate}

Note that at least of the above must hold. Indeed, if (1) and (2) do not hold, then any arc $\alpha$ in $T_0$ cannot have a non-trivial subarc that only intersects elements of $\mathcal P$ in at most one point. That is, (3) holds.

\subsection{Single intersection}
We discuss Case \ref{item:case1}.

\begin{pro}
\label{prop:pseudo-metric}
In Case \ref{item:case1}, $G$ acts without a global fixed point on a tree $T_1$ such that each arc stabiliser is either trivial or a maximal cyclic subgroup. 
\end{pro}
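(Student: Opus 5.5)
The plan is to collapse all limit horoballs in $T$ to points: the remaining tree only has non-peripheral arcs, and Lemma \ref{lem:stabs_in T} controls their stabilisers. Case \ref{item:case1} is exactly what guarantees that the collapsed tree is non-degenerate.

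\textbf{Step 1: the pseudo-metric.} Define a pseudo-metric $d_1$ on $T_0$ as follows. For $x,y\in T_0$, $d_1(x,y)$ is the one-dimensional Lebesgue measure of the set of points of $[x,y]$ that do not lie in the interior (relative to $[x,y]$) of any non-degenerate segment $[x,y]\cap H$ with $H\in\mc{P}$.

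\begin{itemize}
\item By Lemma \ref{lem:hor_inters}, each $[x,y]\cap H$ is a closed subsegment, and two distinct horoballs share at most one point. So these subsegments have pairwise disjoint interiors, and there are at most countably many non-degenerate ones.
\item Hence $d_1(x,y)$ equals $d_T(x,y)$ minus the total length of these subsegments.
\item $d_1$ is additive along geodesics, because a segment $[x,y]\cap H$ meets $[x,z]$ and $[z,y]$ (for $z\in[x,y]$) in the corresponding pieces. The triangle inequality then follows from the tripod structure of $T$.
\item $\mc{P}$ is $G$-invariant, since $\phi_n(g)$ permutes $\mc{H}_{k(n)}$. Therefore $d_1$ is $G$-invariant.
\end{itemize}

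\textbf{Step 2: the tree $T_1$.} Let $T_1$ be the metric quotient $T_0/\{d_1=0\}$. Additivity along segments makes $T_1$ an $\mb{R}$-tree: geodesics are images of segments, and tripods persist. Equivalently, $T_1$ is obtained by collapsing each limit horoball (intersected with $T_0$) to a point.

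The arc $\alpha$ from Case \ref{item:case1} has $d_1$-length equal to its $d_T$-length, because each $\alpha\cap H$ is at most a point and so nothing is removed. Thus $T_1$ is not a point. Since $G$ acts minimally on $T_0$ and $T_1$ is a non-trivial quotient, the action on $T_1$ has no global fixed point. A fixed point in $T_1$ would pull back to an invariant horoball or point of $T_0$, i.e.\ a proper closed invariant subtree, contradicting minimality. If needed, I will pass to the minimal subtree of $T_1$.

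\textbf{Step 3: arc stabilisers (main obstacle).} Let $\beta=[u,v]\subset T_1$ be a non-trivial arc, and lift it to an arc $\tilde\beta=[\tilde u,\tilde v]\subset T_0$. Preimages of points are single points or horoball pieces, and distinct horoballs meet in at most a point, so the lift is canonical up to the choice of endpoints inside the collapsed horoballs. Choose endpoints lying on the "gate" points, i.e.\ nearest-point projections. Then an element stabilising $\beta$ stabilises $\tilde\beta$ setwise, and it preserves orientation since it fixes the ends of $\beta$. Hence it fixes $\tilde\beta$ pointwise.

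Since $d_1(\tilde\beta)>0$, the arc $\tilde\beta$ contains a non-trivial subarc $\alpha'$ not contained in any limit horoball. Otherwise all of $\tilde\beta$ would be covered by the horoball pieces, which forces measure zero after collapsing: the measure-theoretic point is that the uncovered set has positive measure, so some subarc is non-peripheral. So $\mathrm{Stab}(\beta)\subseteq\mathrm{Stab}(\alpha')$, which by Lemma \ref{lem:stabs_in T} is trivial or maximal cyclic.

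To get "maximal cyclic" rather than merely "contained in a cyclic group", I also need $\mathrm{Stab}(\beta)$ itself to be maximal cyclic. For this, take a root $r$ with $r^m\in\mathrm{Stab}(\beta)$. Because $G$ is torsion-free hyperbolic and $r$ acts elliptically on $T_1$ (some power does), $r$ fixes the fixed set of $r^m$. This uses that in an $\mb{R}$-tree $\mathrm{Fix}(r)=\mathrm{Fix}(r^m)$ when $r$ is elliptic, which holds because $r$ acts on the subtree $\mathrm{Fix}(r^m)$ with finite order and fixes a point, and the standard argument gives the inclusion. I expect this root-closure step and the careful lifting of arcs through collapsed horoballs to be the delicate points.
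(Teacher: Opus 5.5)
Your construction of $T_1$ is the same as the paper's. So is your plan of computing arc stabilisers through the bridge $\tilde\beta$ between the preimages of the endpoints. The final step, however, fails as written. That is the step where you upgrade ``contained in a maximal cyclic group'' to ``maximal cyclic'', and it rests on a false claim.

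For an elliptic isometry $r$ of an $\mathbb{R}$-tree one does \emph{not} have $\mathrm{Fix}(r)=\mathrm{Fix}(r^m)$, even in torsion-free groups. Take the Bass--Serre tree of $\langle a\rangle *_{a^2=b^2}\langle b\rangle$ (the Klein bottle group). There $a$ fixes the vertex $v$ with stabiliser $\langle a\rangle$, and it swaps the two edges at $v$, both of which are fixed by $a^2$. A finite-order isometry of a tree that fixes a point need not fix anything else, so there is no ``standard argument'' giving the inclusion.

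Consequently your root-closure step proves nothing. From $\mathrm{Stab}(\beta)\subseteq\mathrm{Stab}(\alpha')=\langle c\rangle$ alone you cannot exclude $\mathrm{Stab}(\beta)=\langle c^j\rangle$ with $|j|\ge 2$. This matters: maximality is part of the statement, and it is what makes the action stable for the later application of \cite[Theorem 9.6]{BF95}.

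The repair is not to pass to a subarc, and to prove the reverse inclusion instead, which is what the paper does.

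\textbf{The bridge is non-peripheral.} The bridge $\tilde\beta=[\tilde u,\tilde v]$ is itself non-peripheral. An arc inside a single limit horoball has $d_1$-length $0$, whereas $d_1(\tilde u,\tilde v)=d_{T_1}(u,v)>0$.

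\textbf{Reverse inclusion.} An element fixing $\tilde u$ and $\tilde v$ fixes their classes $u,v$. Since $T_1$ is a tree, it therefore fixes $\beta$.

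Hence $\mathrm{Stab}(\beta)=\mathrm{Stab}(\tilde\beta)$, and Lemma \ref{lem:stabs_in T} gives ``trivial or maximal cyclic'' directly.

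There is also a smaller inaccuracy. Preimages of points of $T_1$ are not in general single points or single horoball pieces, so ``collapsing each limit horoball to a point'' is not an equivalent description. Away from the arc provided by Case \ref{item:case1}, a segment may be covered by infinitely many pieces accumulating at a point, and then the whole chain is identified. What you actually need, and what the paper takes from \cite[Lemmas 2.15 and 2.17]{DS2}, is that preimages are closed subtrees. This follows from $d_1\le d_T$ together with additivity of $d_1$ along segments. With that description, your gate-point construction and your no-fixed-point argument go through unchanged.
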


\begin{proof}
We will use that $T_0$ is {\em tree-graded} in the sense of \cite[Definition 2.1]{DS}, with {\em pieces} given by the $H\cap T_0$ for $H\in\mathcal P$. Indeed, the conditions for a complete geodesic metric space (note that $T_0$ is complete because it is closed in $T$ and $T$ is complete) being tree-graded are that (T1) any two pieces intersect in at most one point, which is the case for elements of $\mathcal P$ by Lemma \ref{lem:hor_inters}, and that (T2) all simple loops are contained in a piece, which holds vacuously for us.

Denote by $d$ the metric of $T_0$. For $x,y\in T_0$, define $\tilde d(x,y)$ to be $d(x,y)$ minus the length of the intersections of the geodesic $[x,y]$ with elements of $\mathcal P$. In \cite[Section 2]{DS2} it is shown that this is a pseudo-metric, and that considering the equivalence relation where $x\sim y$ if $\tilde d(x,y)=0$, we have that 
\[
T_1:=\left(T_0/_\sim\,,\,\tilde d\right)
\]
is an $\mathbb R$-tree, see in particular \cite[Lemma 2.19]{DS2}.
    
The hypothesis of Case \ref{item:case1} is easily seen to imply that $T_1$ is not a single point. Moreover, we claim that the stabiliser of any non-trivial arc $\alpha$ in $T_1$ coincides with the stabiliser of a non-peripheral arc of $T_0$. Indeed, since $T_1$ is a tree, the stabiliser of $\alpha$ coincides with the stabiliser of its endpoints, which in turn coincides with the common stabiliser of the pre-images of the endpoints in $T_0$. Said pre-images are (closed and connected, so they are) closed sub-trees by \cite[Lemmas 2.15 and 2.17]{DS2}, and they are necessarily disjoint. Therefore, the common stabiliser of these sub-trees coincides with the stabiliser of a minimal arc joining them. This arc cannot be peripheral, for otherwise the endpoints of $\alpha$ would coincide, and we are done by Lemma \ref{lem:stabs_in T}.
\end{proof}

\subsection{Finitely covered arcs}
We discuss Case \ref{item:case2}.

\begin{pro}
\label{prop:simplicial}
In Case \ref{item:case2}, either $G$ locally covers $B$ or $G$ acts on a simplicial tree $T_2$ without a global fixed point and with edge stabilisers that locally cover $B$.
 \end{pro}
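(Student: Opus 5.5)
In Case \ref{item:case2} I would build $T_2$ as the bipartite \emph{incidence tree} of the pieces $H\cap T_0$, $H\in\mc{P}$, in the spirit of the tree of pieces of a tree-graded space. First I check that stabilisers of limit horoballs locally cover $B$. Let $H\in\mc{P}$ come from horoballs $H_n\in\mc{H}_{k(n)}$. If $g\in G$ stabilises $H$, then $\omega$-a.s. $\phi_n(g)H_n$ is at distance $o(d_n)$ from $H_n$. Using the Claim inside the proof of Lemma \ref{lem:hor_inters} (uniform control of coarse intersections of distinct horoballs), I expect to upgrade this to $\phi_n(g)H_n=H_n$ $\omega$-a.s.; if that fails I would simply define the stabiliser of $H$ as this set of elements. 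Then for a finitely generated $K<{\rm Stab}(H)$, $\phi_n(K)<{\rm Stab}(H_n)\cong\pi_1(B)\times\mb{Z}/k(n)\mb{Z}$. Since $G$ is torsion-free and $\phi_n$ is injective, the projection to $\pi_1(B)$ restricted to $\phi_n(K)$ is injective, as in the proof of Corollary \ref{cor:injections}. So $K$ embeds in $\pi_1(B)$, and ${\rm Stab}(H)$ locally covers $B$.

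Next, the tree. Take as vertex set the pieces $\mc{P}_0=\{H\cap T_0: |H\cap T_0|>1\}$ together with the set $V_c$ of points of $T_0$ lying in at least two pieces. Join a piece to a point when the point lies in it. By hypothesis every arc of $T_0$ is covered by finitely many pieces. By Lemma \ref{lem:hor_inters} two pieces meet in at most a point and pieces are convex, so a finite cover of an arc can be arranged as a chain $H_1,\dots,H_r$ with consecutive pieces meeting at cut points. This makes the graph connected. If it had a cycle, gluing the geodesic segments inside the pieces would give a nondegenerate simple loop in $T_0$ not contained in a single piece, which is impossible in an $\mb{R}$-tree. So $T_2$ is a simplicial tree with a natural $G$-action. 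Edge stabilisers are contained in stabilisers of pieces, and these locally cover $B$ by the first step, since subgroups of locally covering groups locally cover $B$.

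Finally, the dichotomy. If $G$ fixes a vertex of $T_2$, there are two cases. If the fixed vertex is a piece $H\cap T_0$, then $H\cap T_0$ is a closed convex invariant subtree, so by minimality $T_0=H\cap T_0$, and $G={\rm Stab}(H)$ locally covers $B$. If instead $G$ fixes a point $p\in V_c$, then $G$ fixes a point of $T$, which contradicts the absence of a global fixed point. Otherwise $G$ acts on $T_2$ without a global fixed point, which is the second alternative.

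The main obstacle is the first step. Showing that an element coarsely preserving $H$ in the limit actually stabilises $H_n$ needs care: $\phi_n(g)H_n$ and $H_n$ could be distinct horoballs that are $o(d_n)$-close along a region that grows in the rescaled picture. I would rule this out with the uniform estimate ``$N_R(H_1)\cap H_2$ has diameter $\le DR+D$'': since $H$ contains a nondegenerate arc, the overlap has diameter $\gtrsim d_n$ while $R=o(d_n)$, which forces $\phi_n(g)H_n=H_n$. The second delicate point is the combinatorial verification that chains of pieces along arcs behave well, namely connectivity and acyclicity of $T_2$.
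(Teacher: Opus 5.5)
Your proposal is correct and follows the paper's proof essentially step for step: the same bipartite graph on $\mathcal P_0\cup\mathcal I$, connectivity via finite chains of horoballs covering an arc and acyclicity via simple loops in $T_0$, the projection ${\rm Stab}(H_n)\cong\pi_1(B)\times\mathbb Z/k(n)\mathbb Z\to\pi_1(B)$ for finitely generated subgroups of vertex stabilisers, and the same fixed-vertex dichotomy (your minimality step there is unnecessary, since $G$ fixing the vertex already gives $G={\rm Stab}(H)$). The one local difference is how you force $\phi_n(g)H_n=H_n$: you play the nondegenerate arc in $H\cap gH$ against the uniform bound ${\rm diam}(N_R(H_1)\cap H_2)\le DR+D$ from the Claim in Lemma \ref{lem:hor_inters}, whereas the paper approximates a point of $H$ far from $o$, argues that its images land inside $H_n$, and then invokes precise invariance; your version is sound and arguably more robust, since being far from $o_n$ does not by itself make a point deep in $H_n$.
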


\begin{proof}
Consider the set $\mathcal P_0\subset\mc{P}$ of limit horoballs (see Definition \ref{dfn:lim hor}) that intersect $T_0$ in more than one point. Note that $\mathcal P_0$ cannot be empty, for otherwise $T_0$ would need to be a single point, but $G$ acts with displacement $1$ on $T_0$. Let $\mathcal I$ be the collection of all points of $T_0$ that are intersections of two elements of $\mathcal P_0$. Consider the graph $T_2$ with vertex set $\mathcal P_0\cup \mathcal I$ and where an element of $\mathcal I$ is connected to the horoballs of $\mathcal P_0$ in which it is contained.

\begin{claim}
$T_2$ is a simplicial tree.
\end{claim}

\begin{proof}
First of all, we argue that $T_2$ is connected. To see this, it is enough to argue that any two vertices $v,v'\in\mc{P}_0$ can be connected by a path. By definition, $v,v'$ correspond to limit horoballs $H,H'\subset T$ that intersect $T_0$ in more than one point. Let $\alpha\subset T_0$ be an arc connecting a point in $H\cap T_0$ to a point in $H'\cap T_0$. By the assumption of Case \ref{item:case2}, we have that $\alpha$ is contained in a finite union of limit horoballs 
\[
\alpha\subset H_1\cup\cdots\cup H_r\Longrightarrow\alpha=(H_1\cap \alpha)\cup\cdots\cup(H_r\cap\alpha).
\]
Each $H_j\cap\alpha$ is a connected closed subsegment of $\alpha$. By connectedness, $\alpha$ can be covered using just those $H_j\cap \alpha$ that are not empty and not a single point (in particular, we can assume that every $H_j$ is in $\mc{P}_0$). By connectedness of $\alpha$, it also easily follows that the subgraph of $T_\alpha\subset T_2$ spanned by $H_1,\cdots,H_r\in\mc{P}_0$ and $\bigcup (H_i\cap H_j\cap\alpha)\subset\mc{I}$ is connected. As $H,H'$ are connected to some vertices of $T_\alpha$ (they must intersect some $H_j$'s), they can be joined by a path in $T_2$, as required.

Let us now discuss cycles in $T_2$. If $T_2$ contained a simple loop $\ell$, we could construct a loop in $T_0$ by concatenating geodesics in elements of $\mathcal P_0$ that occur along $\ell$. Since distinct elements of $\mathcal P$ intersect in at most one point (Lemma \ref{lem:hor_inters}), this loop in $T_0$ is easily seen to be simple as well, contradicting that $T_0$ is a tree.
\end{proof}

\begin{claim}
If $v\in\mathcal P_0$, then the stabiliser of $v$ locally covers $B$.
\end{claim}

\begin{proof}
Recall that $v$ corresponds to a limit of horoballs $H_n\in\mc{H}_{k(n)}$ with 
\[
\sup{d_{X_{k(n)}}(o_n,H_n)/d_n}=D<\infty.
\]
Consider $h_1,\cdots,h_r\in{\rm Stab}(v)$ and $x\in H$ with $d_T(o,h_jx)>4D$ for every $j\le r$. We have $h_jx\in H$ for every $j\le r$. Fix $\ep<D$. By Hausdorff-Gromov convergence (see Definition \ref{dfn:hg conv}), for every $n$ large enough we find $x^n,y_j^n\in H_n$ such that 
\[
d_{X_{k(n)}}(y_j^n,\phi_n(h_j)x^n)/d_n<\ep\quad\text{\rm and}\quad d_{X_{k(n)}}(o_n,y_j^n)/d_n\ge d_T(o,h_jx)-\ep\ge 2D.
\]
for every $j\le r$. We argue that, for $n$ large enough, we have $\phi_n(h_j)x_n\in H_n$ for every $j\le r$ and, hence, $\phi_n(h)H_n\cap H_n\neq\emptyset$. As the collection $\mc{H}_{k(n)}$ is $Q_{k(n)}$-precisely invariant we deduce that $\phi_n(h_1),\cdots,\phi_n(h_r)\in{\rm Stab}(H_n)$ for all $n$ sufficiently large. To see that $h_jx_n\in H_n$, observe that $\phi_n(h_j)x_n$ is in the $\ep$-neighborhood (for the metric $d_{X_{k(n)}}/d_n$) of the point $y_j^n$ which is contained in the horoball $H_n$. Note also that $y_j^n$ has distance from the origin $o_n$ at least $2D$ where $D\ge d_{X_{k(n)}}(o_n,H_n)/d_n$. As $2D-\ep>D$, the $\ep$-neighborhood of $y_j^n$ for the metric $d_{X_{k(n)}}/d_n$ is entirely contained in the horoball $H_n$.
Consider now a finitely generated subgroup $H<{\rm Stab}(v)$. By the above discussion, for some $n$ we have $\phi_n(H)$ lies in the stabiliser ${\rm Stab}(H_n)$ of the horoball $H_n$, which is a conjugate of $P_{k(n)}$ (see Section \ref{sec:rel hyp}), because the generators of $H$ are all mapped to ${\rm Stab}(H_n)$. Since $P_{k(n)}$ is isomorphic to $ \pi_1(B)\times\mathbb Z/k(n)\mb{Z}$ (see Section \ref{sec:rel hyp}), $\phi_n$ is injective, and $H$ is torsion-free, we have that $H$ is isomorphic to a subgroup of $\pi_1(B)$, as required.
\end{proof}

If $G$ does not fix any vertex of $T_2$, we are done. Suppose then that $G$ fixes a vertex $v$ of $T_2$. If $v\in\mathcal P_0$, then $G$ locally covers $B$, and we are done. On the other hand, if $v\in \mathcal I$, then $G$ does not fix $v$ because the action on $T_0$ does not have a fixed point. This concludes the proof.
\end{proof}

\subsection{Generically dense with a non-finitely covered arc}
Finally, we discuss Case \ref{item:case3}.

\begin{pro}
\label{prop:pass-to-cone-off}
In Case \ref{item:case3}, $G$ acts without a global fixed point on a tree $T_3$ such that the stabilisers of unstable arcs have bounded cardinality. 
\end{pro}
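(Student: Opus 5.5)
The plan is to cone off the horoballs before taking the limit, as the overview suggests (``taking a limit of coned-off graphs''). For each $n$, let $\hat X_{k(n)}$ be the space obtained from $X_{k(n)}$ by coning off every horoball $H\in\mc{H}_{k(n)}$, for instance by collapsing each $H$ to a point or attaching a cone of radius $1$. Since the horoballs are uniformly quasi-convex and have uniformly bounded coarse intersections (the Claim inside Lemma~\ref{lem:hor_inters}), the spaces $\hat X_{k(n)}$ are uniformly hyperbolic. This is the standard coning-off of a relatively hyperbolic structure, made uniform in $k$ by Proposition~\ref{pro:unif hyp}. The action of $Q_{k(n)}$ on $\hat X_{k(n)}$ should be acylindrical with constants independent of $n$. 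I would derive this from the Dehn filling theory of \cite{Osin,GM,DG18}: the coned-off Cayley graph of $(\pi_1(M-B),\pi_1(U-B))$ has an acylindrical action, and for large $k$ the ball of a fixed radius around a point in $\hat X_{k}$ is isometric to the corresponding ball upstairs. Proving this uniform acylindricity is the step I expect to be the main obstacle. I would first try to adapt the ``injectivity radius of the filling'' estimates from \cite{DG18}.

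Next I would rescale. Let $\hat d_n$ be the minimal displacement of $\phi_n(S)$ on $\hat X_{k(n)}$. In Case~\ref{item:case3} there is an arc $\alpha\subset T_0$ that is not covered by finitely many limit horoballs. I would use this to show $\hat d_n\to\infty$. The idea is to compare coned-off distances with the length of $\alpha$ minus the lengths of its horoball intersections. The ``not finitely covered'' hypothesis should force infinitely many horoball pieces crossing approximating geodesics, each contributing at least a definite amount in the coned metric, so that the coned-off displacements cannot stay bounded. If $\hat d_n\to\infty$, then the ultralimit of $(\hat X_{k(n)},\hat d_{X}/\hat d_n)$ with basepoints realising $\hat d_n$ up to error $1$ gives an $\mb R$-tree $T_3$ with a $G$-action and no global fixed point. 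Should this estimate fail, I would instead rescale by $\hat d_n$ directly. A bounded $\hat d_n$ would then mean $G$ has coarsely bounded orbits in uniformly acylindrical spaces, and I would aim to contradict the density hypothesis of Case~\ref{item:case3}.

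Finally, I would control the stabilisers of unstable arcs, using the usual argument for limits of uniformly acylindrical actions (as in \cite{DG18} and Delzant-type arguments). Let $[a,b]\subset T_3$ be an arc and $g_1,\dots,g_m$ elements fixing it. Their images $\phi_n(g_i)$ then move two far-apart points of $\hat X_{k(n)}$ by at most $o(\hat d_n)$. Applying uniform acylindricity at the scale of the hyperbolicity constant, together with the standard trick of passing to a subarc and using commutators so that the fixed elements actually move a long geodesic subsegment by a bounded amount, gives a uniform bound $N$ on the number of such elements for $n$ large. Injectivity of $\phi_n$ then bounds the stabiliser of a subarc, and hence bounds the stabilisers of unstable arcs. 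This gives the stated conclusion.
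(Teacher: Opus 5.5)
Your outline matches the paper's: cone off the horoballs, show that the actions $Q_k\curvearrowright\bar X_k$ are uniformly hyperbolic and uniformly acylindrical, show that $(\phi_n)$ is divergent, and pass to the limit tree. The paper gets that last step, including the bound on unstable arc stabilisers, by quoting \cite[Theorem 4.4, Lemma 4.7-(4)]{GrovesHull}. However, the step you flag as the main obstacle, uniform acylindricity, is missing, and the route you suggest for it cannot work.

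\textbf{Acylindricity.} The filling kernel $\langle\langle\gamma^k\rangle\rangle$ contains nontrivial elements stabilising each horoball. These move points of that horoball (or points adjacent to it) by coned-off distance at most $3$, yet they are killed in $Q_k$. Since every point is uniformly close to a horoball, the map $\hat X\to\hat X_k$ is not injective on any ball of a fixed (large) radius, for any $k$. Injectivity-radius estimates for fillings hold in the cusped space near its thick part, not in the cone-off. Moreover, acylindricity concerns pairs of far-apart points, so even a correct local statement would need a local-to-global mechanism. The paper builds one inside $X_k$:
\begin{enumerate}
\item Replacing the horoball subsegments of an $X_k$-geodesic by single edges gives uniform quasi-geodesics of $\bar X_k$ (Claim~\ref{claim:replace}).
\item The transient parts of $X_k$-triangles are uniformly thin (Claim~\ref{claim:transient_thin}).
\item Hence, if $g$ moves two far-apart points of $\bar X_k$ a bounded amount, take a transient point $p$ near the coned-off midpoint. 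Then $g$ sends one of boundedly many transient points $q$ near $p$ into a bounded $X_k$-ball around $p$.
\item Such balls have cardinality bounded independently of $k$, being images of balls of $X$ near its thick part. Point stabilisers are finite, so there are boundedly many such $g$.
\end{enumerate}

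\textbf{Divergence.} This step is also not yet an argument. The quantity ``length of $\alpha$ minus its horoball intersections'' may well vanish in Case~\ref{item:case3}. What matters is the number of horoballs crossed, and turning that count into a coned-off lower bound again needs Claim~\ref{claim:replace}. More seriously, large coned-off distances between approximants of points of $\alpha$, which are built around the $o_n$, do not bound from below the displacement of $\phi_n(S)$ at an arbitrary point of $\hat X_{k(n)}$. That lower bound at every point is what divergence means. The paper handles this as follows:
\begin{enumerate}
\item By minimality of $T_0$, it places a non-finitely-covered arc inside the axis of a loxodromic $g\in G$, using $g_0g_1$ for two elements with disjoint axes if necessary (Claim~\ref{claim:g}).
\item For large $n$, a quasi-axis of $\phi_n(g)$ in $X_{k(n)}$ then crosses arbitrarily many horoballs deeply within one period, so the translation length of $\phi_n(g)$ on $\bar X_{k(n)}$ diverges.
\item This translation length is at most $|g|_S$ times the displacement of $\phi_n(S)$ at any point, so the minimal displacement diverges.
\end{enumerate}

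\textbf{Stabilisers.} As written, your sketch would bound the stabiliser of every arc, which is more than such limits give. Arc stabilisers can be infinite, for instance in limits of powers of a Dehn twist, and the commutator trick only controls commutators of arc-fixing elements. The bound for unstable arcs has to use instability, e.g.\ an element fixing a subarc but not the whole arc, which forces suitable elements to fix a tripod. This part can simply be quoted from \cite{GrovesHull}.
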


\begin{proof}
The idea is to apply \cite[Theorem 4.4, Lemma 4.7-(4)]{GrovesHull} which gives the desired conclusion provided that we prove two properties. First, we need to find a family of {\em uniformly acylindrical} actions $Q_k\curvearrowright\bar{X}_k$ (in the sense of \cite[Definition 2.8]{GrovesHull}). Then, we need to show that, with respect to these actions, the injective homomorphisms $\phi_n:G\to Q_{k(n)}$ are {\em divergent} (in the sense of \cite[Definition 4.2]{GrovesHull}).

For us, $\bar X_{k}$ will be the {\em coned-off graph} for $Q_k$, namely the space obtained from $X_k$ by adding edges connecting all pairs of vertices contained in the same horoball $H_k\in\mc{H}_k$. 

We now establish the uniform acylindricity property of $Q_k\curvearrowright\bar X_k$. 

\begin{lemma}
\label{lem:unif_acyl}
There exists $\delta'\geq 1$ such that all $\bar X_k$ are $\delta'$-hyperbolic. Moreover, the actions of $Q_k$ on $\bar X_k$ are uniformly acylindrical.
\end{lemma}

\begin{proof}
Uniform hyperbolicity follows from the well-known fact that coning off uniformly quasi-convex subsets of a hyperbolic space yields a hyperbolic space, and the new hyperbolicity constant depends on the old one and the quasiconvexity constant of the subsets. This follows, for instance, from \cite[Corollary 2.4]{Kapovich-Rafi}.

To show acylindricity, we actually need a bit more.
    
\begin{claim}
\label{claim:replace}
There exists a constant $\lambda$ such that for all $k$ and all geodesic $\alpha$ in $X_k$, we can replace a collection of subsegments of $\alpha$ contained in horoballs with single edges to obtain a $(\lambda,\lambda)$-quasi-geodesic $\bar\alpha$ of $\bar X_k$ (compare with \cite[Proposition 7.9]{Hruska}).
\end{claim}

\begin{proof}[Proof of the Claim]
To see this, note that, given such a geodesic $\alpha$, we can find a disjoint collection of subsegments that uniformly coarsely coincide with the intersection of $\alpha$ with all horoballs it passes through. Denoting $\bar\alpha$ the path obtained by replacing those with a single edge, we can construct a coarse retraction onto $\bar\alpha$ using closest-point projection to $\alpha$ in $X_k$; we use here that if the projection of a horoball onto $\alpha$ is sufficiently large, then $\alpha$ passes through the horoball. The existence of this retraction guarantees that $\bar\alpha$ is a uniform quasi-geodesic.
\end{proof}

For a geodesic $\alpha$, denote by $\alpha_{\rm tr}$ the complement of the subsegments as in Claim \ref{claim:replace}.
    
\begin{claim}
\label{claim:transient_thin}
There exists a constant $D$ such that, for all $k$, given a triangle $\alpha\cup\beta\cup \gamma$ in $X_k$ we have $\alpha_{\rm tr}\subseteq N_D(\beta_{\rm tr}\cup\gamma_{\rm tr})$ (compare with \cite[Definition 3.11]{metric_rh}). 
\end{claim}

\begin{proof}[Proof of the Claim]
Given $p\in \alpha_{\rm tr}$, we know by hyperbolicity that it is $\delta$-close to either $\beta$ or $\gamma$. It is easy to see that a point of, say, $\beta$ which is within bounded distance of the complement of the horoballs is uniformly close to $\beta_{\rm tr}$, so we are done.
\end{proof}

Now, the constants implicit in the following arguments can be chosen independently of $k$, and this will conclude the proof. Consider two sufficiently far away points $x,y\in\bar X_k$, and an element $g$ moving them both a bounded amount. There is a quasi-geodesic $\bar\alpha$ connecting $x,y$ as in Claim \ref{claim:replace}. Consider a point $p$ of $\alpha_{\rm tr}$ such that $p$ is lies at the same distance in $\bar X_k$ from $x$ and $y$, up to a bounded error. Using two geodesic triangles and Claim \ref{claim:transient_thin}, it is readily seen that $p$ is $2D$-close to a point $gq$ of $g\alpha_{\rm tr}$. Moreover, $q$ lies within bounded distance of $p$ in $\bar X_k$ (but possibly not in $X_k$). There are boundedly many possibilities for $q$ since $\bar\alpha$ is a quasi-geodesic in a hyperbolic space, and boundedly many possibilities for $gq$ since balls in $X_k$ are finite. Since the stabiliser of $q$ is finite, there are boundedly many possibilities for $g$, as required. 

This finishes the proof of the lemma.
\end{proof}

Next, we prove that the sequence of actions $\phi_n:G\to Q_{k(n)}$ is divergent.

\begin{lemma}
\label{lem:divergent}
The sequence of actions $\phi_n:G\to Q_{k(n)}$ is divergent for $Q_{k(n)}\curvearrowright X_{k(n)}$.    
\end{lemma}

\begin{proof}
Consider an arc $\alpha\subset T_0$ as provided by Case \ref{item:case3}, that is, an arc that cannot be covered by finitely many limit horoballs $H\in\mc{P}$. It will be convenient to arrange the arc $\alpha$ to be contained in the axis of some element of $g$. This is what we do with the next claim.

\begin{claim}
\label{claim:g}
There exists an arc $\beta$ as in (3) which is contained in the axis of a loxodromic element $g$ for the action of $G$ on $T_0$.
\end{claim}

\begin{proof}[Proof of the Claim]
Fix an arc $\alpha$ as in (3). Since the action of $G$ on $T_0$ is minimal, it is well-known that $T_0$ is the union of all axes of hyperbolic elements. In particular, there exist loxodromic elements $g_0,g_1$ whose axes contain the endpoints of $\alpha$. If either axis contains $\alpha$, we are done. If not, but the axes intersect, then one of them must contain a sub-arc of $\alpha$ which still cannot be covered by finitely many elements of $\mathcal P$, so we are done in this case as well. More generally, by the same argument, we can also assume that neither of the axes contains a subarc of $\alpha$ that cannot be covered by finitely many elements of $\mathcal P$. In particular, the minimal arc $\beta$ joining the axes of $g_0$ and $g_1$ cannot be covered by finitely many elements of $\mathcal P$, and in particular is as in (3). Since the axes of $g_0$ and $g_1$ are disjoint, it is well-known that the product $g_0g_1$ will be loxodromic and that its axis contains $\beta$, and this concludes the proof.
\end{proof}

Using the arc $\beta$ of Claim \ref{claim:g}, we prove the desired divergence property.

\begin{claim}
\label{claim:diverge_cone-off}
The sequence of translation lengths of $\phi_n(g)$ on $\bar X_{k(n)}$ diverges.
\end{claim}

\begin{proof}[Proof of the Claim]
Consider $g$ and $\beta$ as in Claim \ref{claim:g}. Up to passing to a power of $g$, we can assume that $g\beta$ and $\beta$ are disjoint. Since there are infinitely many non-trivial subarcs of $\beta$ of the form $\beta\cap H$, we can deduce the following.  For all integers $k\geq 1$ and constant $C\geq 1$ the following holds for sufficiently large $n$:
\begin{itemize}
    \item $\phi_n(g)$ is loxodromic for the action on $X_{k(n)}$,
    \item there exist horoballs $H^n_1,\dots, H^n_k$ of $X_{k(n)}$ such that a uniform quasi-axis $\gamma_n$ of $\phi_n(g)$ intersects each $H^n_j$ in a subgeodesic of length at least $C$, and 
    \item $\gamma_n$ has two disjoint subgeodesics, one containing the intersection of $\gamma$ with all $H^n_j$ and the other one containing the intersection of $\gamma$ with all $\phi_n(g)H^n_j$.
\end{itemize}
    
The properties above, for $C$ sufficiently large, guarantee that the translation length of $\phi_n(g)$ on $\bar X_{k(n)}$ is bounded from below linearly in $k(n)$ by Claim \ref{claim:replace}. Therefore, the sequence of translation lengths diverges as required.
\end{proof}

This finishes the proof of the lemma.
\end{proof}

Because of Lemma \ref{lem:unif_acyl} and Lemma \ref{lem:divergent}, we can apply \cite[Theorem 4.4, Lemma 4.7-(4)]{GrovesHull}. This finishes the proof of the proposition.
\end{proof}

\subsection{Assembling the proof of Theorem \ref{thm:displ}}
We can now conclude the proof of Theorem \ref{thm:displ}. Recall that we have to argue that $G$ admits a suitable splitting. In Case \ref{item:case2}, we directly apply Proposition \ref{prop:simplicial}, as in this case the action is on a simplicial tree. In the other two cases, we have actions on real trees, and we apply suitable versions of the Rips machine. Specifically, in Case \ref{item:case1} we get the required splitting by applying \cite[Theorem 9.6]{BF95} to the action on a real tree coming from Proposition \ref{prop:pseudo-metric}, while in Case \ref{item:case3} we combine Proposition \ref{prop:pass-to-cone-off} and \cite[Main Theorem]{Guirardel}.

\section{Sectors do not split}
\label{sec:no_split}

Recall that, given a GT-pair $(M,B)$ with wall $W$, we defined its associated complete sector $\bar{S}$ as the metric completion of the complement of a wall $S=M-W$ (see Definition \ref{dfn:sector}). Note that $\pi_1(S)=\pi_1(\bar{S})$. The goal of this section is to show that $\pi_1(\bar{S})$ does not split over any subgroup that locally covers $B$ (see Definition \ref{dfn:locally covers}). 

\begin{pro}
\label{prop:no_split}    
Let $(M,B)$ be a GT-pair of dimension $d\geq 3$ with wall $W$. Then $\pi_1(\bar{S})$ does not split over subgroups that locally cover $B$.
\end{pro}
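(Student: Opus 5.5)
Suppose, for contradiction, that $\Gamma:=\pi_1(\bar S)$ acts on a simplicial tree $T$ without a global fixed point, with one orbit of edges, and with edge stabiliser $E$ that locally covers $B$. (The trivial splitting over $\Gamma$ itself is covered by the same argument; see the third paragraph.) The plan is to combine a cohomological-dimension count with the geometry of $\bar S$.

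\textbf{Step 1: dimension bounds.} Every finitely generated subgroup of $E$ embeds in $\pi_1(B)$, which is the fundamental group of a closed aspherical $(d-2)$-manifold. Hence each such subgroup has cohomological dimension at most $d-2$. Since cohomological dimension of a countable group is controlled by that of its finitely generated subgroups (at most one more), we get ${\rm cd}(E)\le d-1$. The sharper statement, that $E$ has ${\rm cd}\le d-2$ or is a directed union of such groups, is what will actually be needed.

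\textbf{Step 2: locating the boundary.} I would not work with $\Gamma$ abstractly. Instead I would use that $\bar S$ is a compact aspherical $d$-manifold (locally CAT$(-1)$ by Corollary \ref{cor:gromov hyperbolic}). Its boundary $\partial\bar S\cong DW$ is connected, closed, aspherical of dimension $d-1$, and $\pi_1$-injective by Lemma \ref{lem:pi1injective}. Consider how $\pi_1(DW)$ acts on $T$.
\begin{itemize}
\item If $\pi_1(DW)$ fixes no point of $T$, it inherits a nontrivial splitting over subgroups of conjugates of $E$. This gives a Mayer--Vietoris sequence for the closed aspherical $(d-1)$-manifold group $\pi_1(DW)$ over groups of cohomological dimension $\le d-2$.
\item In that sequence the top-dimensional class of $DW$ (with $\mathbb{Z}/2$ or $\mathbb{Z}$ coefficients, $DW$ orientable) must come from the vertex groups.
\item Each vertex group is then an infinite-index subgroup of a closed aspherical $(d-1)$-manifold group, so it has ${\rm cd}\le d-2$, a contradiction.
\end{itemize}
So $\pi_1(DW)$ fixes a vertex $v$ of $T$.

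\textbf{Step 3: ruling out a fixed vertex.} Knowing that $\pi_1(DW)$ fixes $v$, I would apply the relative version of the same argument to the pair $(\bar S,\partial\bar S)$. Use the relative Mayer--Vietoris sequence, or equivalently Poincar\'e--Lefschetz duality $H^d(\Gamma,\pi_1(\partial\bar S);\mathbb{Z}\Gamma)\cong\mathbb{Z}$. A splitting of $\Gamma$ in which the boundary subgroup is elliptic and the edge groups have ${\rm cd}\le d-2$ forces one of the following:
\begin{itemize}
\item the relative fundamental class lives in a vertex group, which must then be all of $\Gamma$ (finite index pieces are excluded by hyperbolicity considerations), so the splitting is trivial; or
\item $\Gamma$ is built from the boundary in a way contradicting that $\bar S$ is not a product or $I$-bundle.
\end{itemize}
The trivial splitting over $E=\Gamma$ is handled separately. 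It would make $\Gamma$ embed in $\pi_1(B)$ through finitely generated subgroups, but ${\rm cd}(\Gamma)=d-1>d-2$, since $\bar S$ is aspherical with nonempty boundary and contains the closed $(d-1)$-manifold $DW$ $\pi_1$-injectively.

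\textbf{Expected obstacle.} The main difficulty is that $E$ need not be finitely generated, so "locally covers" only gives local dimension control. I would try to pass to a finitely generated situation first: $\Gamma$ is finitely presented, so by Dunwoody/Bestvina--Feighn accessibility-style arguments a splitting over $E$ can be replaced by one over a finitely generated subgroup of $E$. If that fails, the geometric fallback is to use the CAT$(-1)$ structure: represent the splitting by a $\pi_1$-injective codimension-one track in $\bar S$ and use the totally geodesic boundary pleated along $B$ (angle $2\pi$) to show that such a track cannot have fundamental group locally covering $B$ unless it is parallel to the boundary.
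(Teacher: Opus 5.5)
\textbf{There is a genuine gap in Step 2, and it is exactly where the real difficulty of the proposition lies.}

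Your Mayer--Vietoris argument for $\pi_1(DW)$ needs the connecting map $H_{d-1}(\pi_1(DW))\to H_{d-2}(E')$ to vanish for the edge groups $E'$. Knowing ${\rm cd}(E')\le d-2$ does not give this. An edge group can be isomorphic to a finite-index subgroup of $\pi_1(B)$, which is a closed orientable aspherical $(d-2)$-manifold group with $H_{d-2}(E')\cong\mb{Z}$.

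In fact your argument uses nothing about $\Gamma$. If it were correct, it would show that $\pi_1(DW)$ has no nontrivial splitting over a group locally covering $B$. But $\pi_1(DW)=\pi_1(W)*_{\pi_1(B)}\pi_1(W)$ is precisely such a splitting. For $d=3$ it is a closed surface group splitting over $\mb{Z}$, and the fundamental class maps onto $H_1$ of the curve $B$. So you have not shown that $\pi_1(\partial\bar S)$ is elliptic, and the non-elliptic case is left open. Reducing to finitely generated edge groups, as in your "expected obstacle'' paragraph, does not help, since the problematic edge groups are finitely generated.

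\textbf{What is missing, as the paper does it.}
\begin{enumerate}
\item A sharper dichotomy than your Step 1: a group locally covering $B$ either has homological dimension $\le d-3$ or is isomorphic to a finite-index subgroup of $\pi_1(B)$. When $E$ is not finitely generated, this uses simplicial volume to pin down the index and force the increasing union to stabilise.
\item With that dichotomy, Mayer--Vietoris only excludes the first alternative for the induced splitting of $\pi_1(\partial\bar S)$.
\item In the second alternative, the edge groups of $\Gamma$ are virtually contained in conjugates of $\pi_1(\partial\bar S)$. This case needs a non-homological argument. A splitting subgroup must coarsely separate $\Gamma$. But no subset of a boundary component $C$ coarsely separates the universal cover $Y$ of $\bar S$, because $\partial_\infty Y-\partial_\infty C$ is connected. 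The paper deduces that connectedness from the Bowditch boundary of $(\pi_1(\bar S),\pi_1(\partial\bar S))$ being a \v{C}ech homology sphere.
\end{enumerate}
Your track-based geometric fallback gestures in this direction but contains no mechanism that would actually rule out such a splitting.

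\textbf{On Step 3.} It corresponds to the paper's Case (1) and is fine in spirit. The cleanest version doubles $\bar S$ along $\partial\bar S$ and runs absolute Mayer--Vietoris for $\pi_1(D\bar S)=G_1'*_E G_2$, whose vertex groups have infinite index. This needs $H_{d-1}(E)=0$. That holds because homology commutes with directed unions; your bound ${\rm cd}(E)\le d-1$ does not give it. The second alternative you list there, about $\Gamma$ being built from the boundary, is unnecessary.
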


\begin{proof}
Suppose that $\pi_1(\bar{S})=G_1*_HG_2$ or $\pi_1(\bar{S})=G_1*_H$ with $H$ that locally covers $B$, and $H$ a proper subgroup of $G_j$. Denote by $\pi_1(\bar{S})\curvearrowright T$ the associated action on the Bass-Serre tree $T$. 

Consider the action of the subgroup $\pi_1(\partial\bar{S})<\pi_1(\bar{S})$. There are two cases. Either $\pi_1(\partial\bar{S})$ fixes a vertex (and, hence, it is conjugate in one of the factors $G_j$, say $G_1$) or we get a decomposition of $\pi_1(\partial\bar{S})$ as a graph of groups. We rule out the first case by homological arguments and the second case by geometric ones.

{\slshape Case (1)}. Suppose $\pi_1(\partial\bar{S})<G_1$. We double $\bar{S}$ along the boundary and obtain the closed $d$-manifold $D\bar{S}$. As $\pi_1(\partial\bar{S})<G_1$, in the amalgamated product case we get a splitting of $\pi_1(D\bar{S})=G_1'*_HG_2$ where $G_1'=G_1*_{\pi_1(\partial\bar{S})}\pi_1(\bar{S})$. The HNN case is very similar, and we do not spell out the details in that case. By Mayer-Vietoris (homology coefficients in $\mb{Z}$), we get the short exact sequence
\[
\xymatrix{
H_{d}(H)\ar[r] &H_d(G_1')\oplus H_d(G_2)\ar[r] &H_d(\pi_1(D\bar{S}))\ar[r]  &H_{d-1}(H).
}
\]
To conclude, we now apply the following lemma that we prove below.

\begin{lemma}
\label{lem:finite_infinite_index}
Let $H$ be a group that locally covers $B$. Then either:
\begin{enumerate}
    \item{$H$ is isomorphic to a finite index subgroup of $\pi_1(B)$.}
    \item{The homological dimension of $H$ is at most $d-3$.}
\end{enumerate}
\end{lemma}

Assuming Lemma \ref{lem:finite_infinite_index}, we finish the proof of Case (1) as follows. By Lemma \ref{lem:finite_infinite_index}, we have $H_{d-1}(H)=H_d(H)=0$. Since $D\bar{S}$ is closed and orientable $H_d(\pi_1(D\bar{S}))=\mb{Z}$. So necessarily either $H_d(G_1')=\mb{Z}$ or $H_d(G_2)=\mb{Z}$. However, both $G_1',G_2$ are infinite index subgroups (this always happens in an amalgamated product if the amalgamating subgroup is proper in both factors) of $\pi_1(D\bar{S})$, so their $d$-dimensional homologies vanish, a contradiction.

It remains to prove Lemma \ref{lem:finite_infinite_index}.

\begin{proof}[Proof of Lemma \ref{lem:finite_infinite_index}]
Since $B$ is a closed aspherical manifold, it is well-known that the homological dimension of a subgroup of $\pi_1(B)$ is $d-2$ when the subgroup has finite index and $\le d-3$ when it has infinite index.

If $H$ is finitely generated, then $H$ is isomorphic to a subgroup of $\pi_1(B)$ by definition of locally covering. Thus the conclusion follows from the previous observation.

If $H$ is not finitely generated, then it is a strictly increasing countable union of finitely generated subgroups $H=\bigcup_{j\in\mb{N}}{H_j}$. By Definition \ref{dfn:locally covers}, each $H_j$ is isomorphic to a subgroup $F_j<\pi_1(B)$ which has finite index if $H_j$ has homological dimension $d-2$ and infinite index if $H_j$ has homological dimension $\le d-3$. 

There are two scenarios. Either there exists $j$ such that $H_j$ has homological dimension $d-2$ or every $H_j$ has homological dimension $d-3$. To analyse them, we make the following observation. 

If $F_j$ has finite index (equivalently, if $H_j$ has homological dimension $d-2$), the index of $F_j$ in $\pi_1(B)$ is completely determined by $H_j$ (it is independent of the embedding $H_j\to\pi_1(B)$). Indeed, the degree is determined by the ratio between the simplicial volumes $||H_j||/||\pi_1(B)||$ (for the definition and properties of the simplicial volume, we refer to \cite{Gromov} or \cite[Chapter 7]{Frigerio}). 

Suppose we are in the first scenario, that is, we have $j_0$ such that $H_{j_0}$ has homological dimension $d-2$. Any embedding $f_j:H_j\to F_j<\pi_1(B)$ with $j\ge j_0$ restricts to an embedding $f_j:H_{j_0}\to\pi_1(B)$. By the observation above, the index of $f_j(H_{j_0})<F_j$ in $\pi_1(B)$ is completely determined by $H_{j_0}$. We deduce that $F_j$ is a finite-index subgroup of $\pi_1(B)$ with index bounded by the index of $F_{j_0}$. As there are only finitely many finite-index subgroups up to a fixed index, $F_j$ is constant for infinitely many $j\ge j_0$ and, hence, we must have $H_j$ constant for all large $j$, but then $H$ is finitely generated and coincides with $H_j$ for all $j$ large enough. This contradicts our assumption that $H$ is not finitely generated. So the first scenario does not happen under this hypothesis.

Suppose we are in the second scenario, that is, each $H_j$ has homological dimension $\le d-3$. As the homological dimension of $H$ is at most the supremum of the homological dimensions of the $H_j$'s (see e.g. \cite[Theorem 4.7]{Bieri}) and each $H_j$ has homological dimension $\le d-3$ because it is isomorphic to an infinite index subgroup of $\pi_1(B)$, we conclude that $H$ has homological dimension $\le d-3$ as desired.
\end{proof}

Next, we discuss Case (2).

{\slshape Case (2)}. Consider the splitting of $\pi_1(\partial \bar{S})$ as a graph of groups. The edge stabilisers are subgroups of $H$. Note that the homological dimension of $\pi_1(\partial \bar{S})$ is $d-1$ and, by Lemma \ref{lem:finite_infinite_index}, each edge stabiliser has either homological dimension $\le d-3$ or it is isomorphic to a finite-index subgroup of $\pi_1(B)$. By the same argument of Case (1), shifting degrees down by 1, we cannot split $\pi_1(\partial \bar{S})$ along subgroups of homological dimension $\le d-3$. So the only possibility left is that each edge stabiliser is isomorphic to a finite index subgroup of $\pi_1(B)$. This means in particular that any edge stabiliser for $\pi_1(\partial \bar{S})$ is a finite index subgroup of an edge stabiliser for $\pi_1(\bar{S})$. 

Therefore, to conclude Case (2), we are left to prove the following.

\begin{lemma}
\label{lem:coarse_split}
$\pi_1(\bar{S})$ does not split over any subgroup virtually contained in $\pi_1(\partial \bar{S})$.
\end{lemma}

\begin{proof}[Proof of Lemma \ref{lem:coarse_split}]
For the proof, we recall a definition and a fact.

\begin{dfn}[Coarse Separation]
Given a subset $A\subseteq Y$ of a metric space $Y$, we say that $A$ {\em coarsely separates} $Y$ if for all $R>0$ there exists $R'>0$ such that given $x,y\in Y$ with $d_Y(x,A),d_Y(y,A)>R'$ there exists a path joining $x$ to $y$ outside the $R$-neighborhood of $A$.    
\end{dfn}

By e.g. \cite[Lemma 2.2]{Pap}, if a finitely generated group $G$ splits over a subgroup $H$, then $H$ coarsely separates $G$, where we conflate $G$ with one of its Cayley graphs.

In view of this, we will argue that no subset of $\pi_1(\partial \bar{S})$ can coarsely separate $\pi_1(\bar S)$. 

As coarse separation has suitable quasi-isometry invariance properties, it suffices to argue that $Y$, the universal cover of $\bar S$ endowed $\bar S$ with the metric from Corollary \ref{cor:gromov hyperbolic}, is not coarsely separated a subset of a boundary component $C\subset\partial Y$ (note that the boundary component is convex, by the same corollary).

In order to prove this, we begin with the following observation.

\begin{claim}
Any point $x\in Y$ lies on a uniform quasi-geodesic ray $\gamma_x$ starting at its closest-point projection $\pi(x)$ in $C$, and such that the distance to $C$ diverges along $\gamma_x$.
\end{claim}
 
\begin{proof}[Proof of the Claim]
Indeed, we can prolong the geodesic $[\pi(x),x]$ until either we obtain a geodesic ray, or we hit another boundary component, and append a geodesic ray contained in said component forming an angle bounded away from $0$ with $[\pi(x),x]$. 
\end{proof}

We note that there exists a constant $K$ such that the distance of any $x'\in \gamma_x$ to $A$ is $d_Y(x',\pi(x))+d_Y(\pi(x),A)$ up to an additive error of at most $K$. This is because any geodesic from $x'$ to $A$ passes uniformly close to $\pi(x)$ due to well-known properties of closest-point projections to (quasi)convex subspaces of hyperbolic spaces.

Now, fix $R>0$, and consider $x,y\in Y$ at distance at least $R$ from $A$. By the above, the subrays of $\gamma_x$ and $\gamma_y$ starting at $x$ and $y$ lie outside the $(R-2K)$-neighborhood of $C$. Thus, to prove that $C$ does not coarsely separate $Y$, it suffices to show that points $x',y'$ sufficiently far along $\gamma_x,\gamma_y$ can be connected by a path outside the $R$-neighborhood of $C$. To do so, we exploit the following.

\begin{claim}
\label{claim:sierpinski}
Let $Z=\partial_\infty Y-\partial_\infty C$ be the Gromov boundary $\partial_{\infty}Y$ of $Y$ minus the limit set $\partial_\infty C$ of $C$. Then $Z$ is connected.
\end{claim}

\begin{proof}[Proof of the Claim]
Note that the set of points of $\partial_{\infty}Y$ not contained in the union $U$ of all limit sets of $\pi_1(\bar S)$-translates of $C$ is dense in $\partial_{\infty}Y$.

By \cite{relations_boundaries}, $\partial_{\infty}Y-U$ is homeomorphic to the complement $\mb{B}-P$ in the Bowditch boundary $\mb{B}$ of the relatively hyperbolic pair $(\pi_1(\bar S),\pi_1(\partial \bar S))$ of the union $P$ of all parabolic points. By \cite{cohom_bowditch}, $\mb{B}$ is a \v{C}ech homology $(n-1)$-sphere, and therefore $\mb{B}-P$ is connected ($\mb{B}$ remains connected after removing finitely many points and an intersection of connected sets is connected). Hence so is $\partial_{\infty}Y-U$, and in turn so is $Z$ since $\partial_{\infty}Y-U$ is dense.
\end{proof}

Let $x_\infty$ and $y_\infty$ be the limit points of $\gamma_x$ and $\gamma_y$. By Claim \ref{claim:sierpinski}, for any $\epsilon>0$ there exists a sequence $x_\infty=a_0,\dots, a_n=y_\infty$ with consecutive elements lying within distance at most $\epsilon$ (in a fixed visual metric on the boundary). For $\epsilon>0$ small enough and $L>0$ large enough, we can choose points $b_j\in[\pi(a_j),a_j)$ such that
\begin{itemize}
    \item $a_0,a_n$ lie within $L$ of points $x',y'$ along $\gamma_x,\gamma_y$ at distance $\geq R+L$ from $C$,
    \item $d_Y(b_j,b_{j+1})\leq L$ for $j\le n$,
    \item $d_Y(b_j,C)\geq R+L$.
\end{itemize}

Connecting the $x'$, $y'$, and $b_j$ in the appropriate order with geodesics, we obtain the desired path outside the $R$-neighborhood of $C$. 
\end{proof}

This concludes the proof of Case (2) and, hence, of Proposition \ref{prop:no_split}.
\end{proof}

\section{Outer automorphism groups}
\label{sec:proof main}

In this section, we prove Theorem \ref{thmA:main}, Theorem \ref{thmA:Out}, and Theorem \ref{thmA:Sylow}. These all require understanding (outer) automorphisms of fundamental groups of Gromov-Thurston manifolds.

We begin by recalling some notation. Fix a GT-pair $(M,B)$, of dimension $d\geq 3$, with wall $W$. We denote by $\bar{S}$ its associated complete sector, that is, the completion of $M-W$ with respect to the intrinsic path metric. Its boundary $\partial\bar{S}$ consists of two isometric copies of $W$, denoted by $W^+,W^-$ intersecting along the singularity $B=W^-\cap W^+$. The degree $k$ branched cover of $M$ along $B$ is obtained gluing $k$ copies $\bar{S}_j$ of $\bar{S}$ along their walls $W_j^\pm$, that is
\[
M_k:=\bigsqcup_{j\in\mb{Z}/k\mb{Z}}{\bar{S}_j}\left/\,\{W_j^+=W_{j+1}^-:j\in\mb{Z}/k\mb{Z}\}\right..
\]
The intersection of all sectors $\bar{S}_j$ is $B_k$, a copy of $B$. We fix once and for all a basepoint $x_k\in B_k$, mapping to a fixed point $x\in B$ under the branched covering $p_k:M_k\to M$. 

A key role in the proofs of our main theorems is played by the group
\[
G_k={\rm Out}(\pi_1(M_k,x_k))
\]
which has finite order (by \cite{Paulin} plus the well-known fact that fundamental groups of closed aspherical $d$-manifolds with $d\ge 3$ do not split over cyclic subgroups). Inside $G_k$ we have the order $k$ element (by Corollary \ref{cor:order_k}) induced by the canonical rotation $\rho_k:M_k\to M_k$ sending $\rho_k(\bar{S}_j)=\bar{S}_{j+1}$ (as in Definition \ref{dfn:branched cover}). We conflate $\rho_k$ with the automorphism of $\pi_1(M_k,x_k)$ that it induces.

\subsection{The action of $G_k$ on injections and Theorem \ref{thmA:main}}

Denote by $\mc{C}_k$ the set of conjugacy classes of injections $\pi_1(\bar{S})\to \pi_1(M_k,x_k)$. These include the canonical inclusions (see Lemma \ref{lem:pi1injective}) induced by $\bar{S}_j\subset M_k$ which we denote by
\[
\iota_j:\pi_1(\bar{S})\to\pi_1(\bar{S}_j,x_k)<\pi_1(M_k,x_k).
\]
Note that $G_k$ acts naturally on $\mc{C}_k$. We have the following.

\begin{lemma}
\label{lem:free}
$\langle \rho_k\rangle<G_k$ acts freely on $\mathcal C_k$ for all sufficiently large $k$.
\end{lemma}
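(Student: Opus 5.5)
To show freeness, suppose $\rho_k^j$ fixes the conjugacy class of some injection $\phi:\pi_1(\bar S)\to\pi_1(M_k,x_k)$, with $0<j<k$. The plan is to move this equation into $Q_k$ and then use a centraliser argument, in the spirit of Corollary \ref{cor:order_k}.

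First, we pass to $Q_k$ using Lemma \ref{lem:van kampen} and Lemma \ref{lem:rotation is conjugation}. There $\rho_k$ is represented, up to an inner automorphism of $\pi_1(M_k)$, by conjugation by $\gamma$. The fixed-class condition then says that there is $g\in\pi_1(M_k)$ with
\[
\gamma^j\phi(x)\gamma^{-j}=g\phi(x)g^{-1}\quad\text{for all }x\in\pi_1(\bar S).
\]
Hence $g^{-1}\gamma^j$ centralises the subgroup $\phi(\pi_1(\bar S))<Q_k$.

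Next, we check that Lemma \ref{lem:centre} applies to this subgroup.

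\emph{Non-elementary.} $\pi_1(\bar S)$ is a non-elementary hyperbolic group by Corollary \ref{cor:gromov hyperbolic}; it is not virtually cyclic, since it contains $\pi_1(B)$ with $\dim B\ge 1$, and in fact $\pi_1(\partial\bar S)$. Its injective image in the hyperbolic group $\pi_1(M_k)$ is therefore non-elementary. The action on $X_k$ needs a little more care. An elementary subgroup of $Q_k$ in the relatively hyperbolic sense is finite, virtually cyclic, or parabolic. So it suffices to rule out that $\phi(\pi_1(\bar S))$ is conjugate into $P_k$.

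\emph{Not conjugate into $P_k$.} Suppose it were. As in the proof of Corollary \ref{cor:injections}, torsion-freeness together with $P_k\simeq\pi_1(B)\times\mb{Z}/k\mb{Z}$ would make $\pi_1(\bar S)$ isomorphic to a subgroup of $\pi_1(B)$. This contradicts the homological dimensions: $\pi_1(\partial\bar S)<\pi_1(\bar S)$ is the fundamental group of a closed aspherical $(d-1)$-manifold, so it has dimension $d-1$, while subgroups of $\pi_1(B)$ have dimension at most $d-2$. (Equivalently, this is the trivial splitting excluded by Proposition \ref{prop:no_split}.)

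With both conditions in hand, Lemma \ref{lem:centre} (for $k$ large) says the centraliser of $\phi(\pi_1(\bar S))$ in $Q_k$ is trivial. Therefore $\gamma^j=g\in\pi_1(M_k)$. By Lemma \ref{lem:rotation is conjugation}, $\gamma$ projects to a generator of the order-$k$ quotient $Q_k/\pi_1(M_k)$, so $k\mid j$, a contradiction. Hence $\langle\rho_k\rangle$ acts freely on $\mc{C}_k$.

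The main obstacle is the careful application of Lemma \ref{lem:centre}. One must make sure that "non-elementary" is meant relative to the action on $X_k$, and that the image subgroup is not conjugate into $P_k$. The dimension argument above handles the second point, and the first follows from the trichotomy of subgroups of a relatively hyperbolic group. A minor bookkeeping point is that basepoints, which differ between Lemma \ref{lem:van kampen} and this section, only change automorphisms by inner ones, so they do not affect the argument.
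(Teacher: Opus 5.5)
Your proposal is correct and follows essentially the same route as the paper. Both realise $\rho_k^j$ in $Q_k$ as conjugation by $\gamma^j$, note that $g^{-1}\gamma^j$ centralises $\phi(\pi_1(\bar S))$, and kill this element with Lemma \ref{lem:centre} after excluding that $\pi_1(\bar S)$ is conjugate into $P_k\simeq\pi_1(B)\times\mb{Z}/k\mb{Z}$.

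There are two small improvements in your version. First, you exclude $\pi_1(\bar S)\hookrightarrow\pi_1(B)$ via the cohomological dimension of $\pi_1(\partial\bar S)$. The paper instead uses a co-Hopfian (``Mostow'') argument, which as written needs $\dim B\ge 2$ and so does not literally cover $d=3$. Second, you make explicit that $\gamma^j\notin\pi_1(M_k)$ for $0<j<k$, a step the paper leaves implicit.
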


\begin{proof}
Suppose that $\rho^j_k[\iota]=[\iota]$ for some injection $\iota:\pi_1(\bar{S})\to\pi_1(M_k,x_k)$ and $1\le j\le k-1$. This means that there exists $g\in\pi_1(M_k,x_k)$ such that 
\[
\rho^j_k\iota(\alpha)=g\iota(\alpha)g^{-1}
\]
for every $\alpha\in\pi_1(\bar{S})$. The same remains true in $Q_k$ after composing with the embedding $\pi_1(M_k,x_k)\to Q_k=\pi_1(M-B)/\langle\langle\gamma^k\rangle\rangle$ provided by Lemma \ref{lem:van kampen}. By Lemma \ref{lem:rotation is conjugation}, inside $Q_k$ we have $\rho_k\iota(\alpha)=\gamma\iota(\alpha)\gamma^{-1}$ for every $\alpha\in\pi_1(\bar{S})$. Thus, 
\[
\gamma^j\iota(\alpha)\gamma^{-j}=g\iota(\alpha)g^{-1}
\]
or, written differently, the element $\gamma^jg^{-1}$ of $Q_k$ commutes with the subgroup $\iota(\pi_1(\bar{S}))$. 

By Lemma \ref{lem:centre}, this is possible only if $\pi_1(\bar S)$ is conjugate to a subgroup of $P_k<Q_k$, the image of $\pi_1(U-B)$ in $Q_k$. Recalling that $P_k\simeq\pi_1(B)\times\mb{Z}/k\mb{Z}$ (see Section \ref{sec:rel hyp}) and that $\pi_1(\bar S)$ is torsion-free (see Lemma \ref{lem:pi1injective}), we deduce that $\pi_1(\bar S)$ must be isomorphic to subgroup of $\pi_1(B)$. However, this cannot happen because of two facts. First, $\pi_1(\bar S)$ contains a proper subgroup (see Lemma \ref{lem:pi1injective}) isomorphic to $\pi_1(B)$. Second, it is well-known that the fundamental group of a closed hyperbolic manifold does not contain a copy of itself as a proper subgroup (by Mostow rigidity). Combined, the two facts prove that $\pi_1(\bar S)$ cannot be isomorphic to a subgroup of $\pi_1(B)$.
\end{proof} 

We now prove Theorem \ref{thmA:main}, which we restate for the convenience of the reader.

\main*

\begin{proof}
By Proposition \ref{prop:no_split}, the group $G=\pi_1(\bar{S})$ satisfies the assumptions of Corollary \ref{cor:injections}. Applying the latter, we get some $I$, independent of $k$, such that there are at most $I$ injections $\pi_1(\bar{S})\to \pi_1(M_k,x_k)$ up to conjugation and elements of $\langle\rho_k\rangle$. In other words,
\[
|\mc{C}_k/\langle\rho_k\rangle|\le I.
\]

By Lemma \ref{lem:free}, the action of $\langle\rho_k\rangle$ on $\mc{C}_k$ is free, so $\mc{C}_k$ is partitioned by such action into $\langle\rho_k\rangle$-orbits of size $k$ (the order of $\rho_k$ by Corollary \ref{cor:order_k}). In conclusion, $|\mc{C}_k|$ is divisible by $k$ and $|\mc{C}_k|/k\le I$, as required.
\end{proof}

\subsection{The action of $G_k$ on orbits and Theorem \ref{thmA:Out}}
Next, we consider injection stabilisers. We denote by 
\[
H_j:={\rm Stab}\left([\iota_j]\right)<G_k
\]
the stabiliser in $G_k$ of the conjugacy class of the inclusion $\iota_j:\pi_1(\bar{S}_j,x_k)\to\pi_1(M_k,x_k)$.

By considering $G_k$-orbits instead of the whole $\mc{C}_k$, we can refine the discussion of the previous section and prove the following.

\begin{lemma}
\label{lem:index}
Let $N>0$ be the constant provided by Corollary \ref{cor:injections} with input $G:=\pi_1(\bar{S})$. For all $j\le k$ we have 
\[
[G_k:H_j]= N_jk
\]
for some $N_j\leq N$.
\end{lemma}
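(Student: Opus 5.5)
By orbit–stabiliser, $[G_k:H_j]$ equals the size of the $G_k$-orbit $G_k\cdot[\iota_j]\subset\mathcal C_k$. So the plan is to show two things: this orbit has size divisible by $k$, and it has size at most $Nk$.

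For divisibility, I would use the subgroup $\langle\rho_k\rangle<G_k$. It has order exactly $k$ by Corollary \ref{cor:order_k}, and it acts freely on $\mathcal C_k$ by Lemma \ref{lem:free}, both for $k$ large. Since $G_k\cdot[\iota_j]$ is $G_k$-invariant, it is in particular $\langle\rho_k\rangle$-invariant. It is therefore a disjoint union of free $\langle\rho_k\rangle$-orbits, each of cardinality $k$. Hence $|G_k\cdot[\iota_j]|=N_jk$ for some positive integer $N_j$.

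For the upper bound, Corollary \ref{cor:injections} applies to $G=\pi_1(\bar S)$ because of Proposition \ref{prop:no_split}. It gives at most $N$ injections $\pi_1(\bar S)\to\pi_1(M_k)$ up to postcomposition by inner automorphisms and powers of $\rho_k$. In other words, $|\mathcal C_k/\langle\rho_k\rangle|\le N$. The orbit $G_k\cdot[\iota_j]$ is a union of some of these $\langle\rho_k\rangle$-orbits, so it contains at most $N$ of them. Thus $N_j\le N$, and $[G_k:H_j]=N_jk$.

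The main obstacle is the small values of $k$, where Lemma \ref{lem:free} and Corollary \ref{cor:order_k} are only stated for $k$ sufficiently large. For the finitely many remaining $k\ge 2$, I would either check these two facts directly or absorb these cases into the constant. The second option should suffice for the intended applications, since $G_k$ is finite and $\mathcal C_k$ is finite for each fixed $k$. The only other point needing care is that the $G_k$-action on $\mathcal C_k$ is well defined, meaning that postcomposing by an outer automorphism is compatible with taking conjugacy classes. This is immediate.
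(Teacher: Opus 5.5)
Your proposal is correct and follows the paper's proof: orbit--stabiliser identifies $[G_k:H_j]$ with $|G_k\cdot[\iota_j]|$, the free action of $\langle\rho_k\rangle$ (Lemma \ref{lem:free}, together with Corollary \ref{cor:order_k}) splits this orbit into blocks of size $k$, and Corollary \ref{cor:injections} bounds the number of blocks by $N$. The paper also tacitly assumes $k$ large here; for small $k$, note that enlarging the constant alone cannot give divisibility by $k$, so you would genuinely need freeness (your first option) in those cases.
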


\begin{proof}
We will show that the $G_k$-orbits $O$ in $\mathcal C_k$ have cardinality of the form $N_jk$ as in the statement. Once we do that, we have that the index $[G_k:H_j]$ is equal to the cardinality of the orbit $O=G_k\cdot[\iota_j]$ in $\mathcal C_k$. 

Consider an arbitrary orbit $O=G_k\cdot[\iota]$ in $\mathcal C_k$. Note that $\langle \rho_k\rangle$ acts on $O$. By Lemma \ref{lem:free}, such action is free, so that $O$ can be partitioned into $\langle \rho_k\rangle$-orbits of cardinality equal to $|\langle \rho_k\rangle|=k$. The number of such orbits is $N_O=|O|/|\langle \rho_k\rangle|\le N$. The claim follows immediately.
\end{proof}

Lemma \ref{lem:index} is the key tool to prove Theorem \ref{thmA:Out}, which we restate for convenience.

\Out*

\begin{proof}
We split the proof into two main steps, namely Claim \ref{claim:index intersection} and Claim \ref{claim:intersection trivial}.

\begin{claim}
\label{claim:index intersection}
$[G_k:\bigcap H_j]$ divides $(N!)^kk$.
\end{claim}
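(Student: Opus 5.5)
We need to show $[G_k:\bigcap_j H_j]$ divides $(N!)^k k$. The plan is to exploit the permutation action of $G_k$ on the finite set of cosets of the $H_j$, and use the rotation $\rho_k$ to relate the different $H_j$.

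First, since $\rho_k(\bar S_j)=\bar S_{j+1}$, the rotation sends $[\iota_j]$ to $[\iota_{j+1}]$ (up to conjugation and the choice of basepoint, which does not matter at the level of conjugacy classes). Hence $H_{j+1}=\rho_k H_j\rho_k^{-1}$, so all $H_j$ are conjugate in $G_k$ and have a common index $[G_k:H_1]=N_1k$ with $N_1\le N$ by Lemma \ref{lem:index}.

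Next, I would refine this using the subgroup $K:=H_1\langle\rho_k\rangle$. Since $\langle\rho_k\rangle$ acts freely on the orbit $O=G_k\cdot[\iota_1]$ (Lemma \ref{lem:free}), $H_1\cap\langle\rho_k\rangle=1$. I expect to need that $\langle\rho_k\rangle$ normalises $H_1$, or at least to work with the chain
\[
\textstyle\bigcap_j H_j\ \le\ \bigcap_{g\in G_k} gKg^{-1}\ \le\ G_k .
\]
The more robust route is the following. The set $O/\langle\rho_k\rangle$ has at most $N$ elements and carries a $G_k$-action as long as $\langle\rho_k\rangle$-orbits form a $G_k$-invariant partition. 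This holds if $\langle\rho_k\rangle$ is normal in $G_k$, which I would try to establish via the characterisation of $\rho_k$ through $Q_k$; failing that, I would pass to its normal closure. This gives a homomorphism $G_k\to\mathrm{Sym}(N_O)$ whose kernel $L$ has index dividing $N!$. The kernel $L$ acts on $O$ preserving each $\langle\rho_k\rangle$-orbit, so each element of $L$ acts on each of the $N_O$ orbits of size $k$ by a permutation commuting with the free transitive $\mathbb Z/k$-action, that is, by a rotation. This yields $L\to(\mathbb Z/k)^{N_O}$.

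Finally, $\bigcap_j H_j$ is the pointwise stabiliser of $\{[\iota_j]\}$, which is one $\langle\rho_k\rangle$-orbit. The map from $L$ to the rotation on this single orbit has image of order dividing $k$, and its kernel is $L\cap\bigcap_j H_j$. So $[G_k:L\cap\bigcap H_j]$ divides $N!\cdot k$. That is already stronger than needed, and since $(N!)^kk$ is a multiple of $N!\,k$, we would get divisibility of $[G_k:L\cap\bigcap_j H_j]$. To pass to $\bigcap_j H_j$ itself, note that $[G_k:\bigcap_j H_j]$ divides $[G_k:L\cap\bigcap_j H_j]$ by Lagrange, so it divides $N!\,k$ and hence $(N!)^kk$.

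The main obstacle is the normality, or at least $G_k$-equivariance, of the partition into $\langle\rho_k\rangle$-orbits. If it fails, I would fall back to the cruder argument that the $k$ conjugates $H_j$ each have index $N_1k$. I would then bound the index of the intersection via the action on $\prod_j G_k/H_j$, using the factor $N!$ per $j$, which is where an exponent $k$ would naturally appear.
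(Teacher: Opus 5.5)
There is a genuine gap, at the step you flag as the main obstacle, and it is more than a missing proof of normality.

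First, your main route needs the partition of $O$ into $\langle\rho_k\rangle$-orbits to be $G_k$-invariant. That essentially requires $\langle\rho_k\rangle$ to be normal in $G_k$, and nothing in the paper provides this; the proof of Theorem~\ref{thmA:Sylow} gets by with Wielandt's conjugacy theorem instead. Passing to the normal closure $R$ does not help, because $R$-orbits need not be free or of size $k$, so the rest of your argument has nothing to work with.

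Second, even granting normality, the inference ``$\ell\in L$ preserves each $\langle\rho_k\rangle$-orbit, hence acts on it by a rotation'' is wrong. Preserving an orbit does not force $\ell$ to commute with $\rho_k$. If $\ell\rho_k\ell^{-1}=\rho_k^{a}$, then on $\{[\iota_j]\}\cong\mathbb{Z}/k$ the element $\ell$ acts by $x\mapsto ax+b$. So the image of $L$ only has order dividing $k\varphi(k)$, and $\varphi(k)$ may have prime factors larger than $N$ (e.g.\ for $k$ prime). A warning sign that the argument proves too much: combined with Claim~\ref{claim:intersection trivial}, it would give $|G_k|\mid N!\,k$, much stronger than Theorem~\ref{thmA:Out}.

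Your fallback does not close the gap either. Bounding each step $[\bigcap_{j\le r}H_j:\bigcap_{j\le r+1}H_j]$ only by $[G_k:H_{r+1}]=N_{r+1}k$ yields something of size $(Nk)^k$, not a divisor of $(N!)^kk$. What is missing is a $k$-independent bound on each step, and this needs geometric input about adjacent sectors. The paper proceeds in three steps:
\begin{enumerate}
\item It telescopes $[G_k:\bigcap_j H_j]$ and reduces each factor to $[H_r:H_r\cap H_{r+1}]$ via $[B:B\cap C]\le[A:C]$.
\item It shows the $H_r$-orbit of the coset $H_{r+1}$ in $G_k/H_{r+1}$ has pairwise disjoint $\langle\rho_k\rangle$-translates. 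Indeed, $\rho hH_{r+1}=\rho'h'H_{r+1}$ forces $\rho'^{-1}\rho\in H_rH_{r+1}H_r$. Such an element preserves the conjugacy class of the inclusion of the shared wall group $\pi_1(W_r^+)=\pi_1(W_{r+1}^-)$, and $\langle\rho_k\rangle$ acts freely on these classes by the argument of Lemma~\ref{lem:free}, so $\rho=\rho'$ and $hH_{r+1}=h'H_{r+1}$.
\item Hence $k\,[H_r:H_r\cap H_{r+1}]\le N_{r+1}k$. Each such factor is a positive integer at most $N$, so it divides $N!$, while the first factor $N_1k$ divides $N!\,k$.
\end{enumerate}
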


\begin{proof}[Proof of the claim]
We write
\[
\left[G_k:\bigcap_{j\le k}{H_j}\right]=\left[G_k:H_1\right]\cdot\left[H_1:H_1\cap H_2\right]\cdot\ldots\cdot\left[\bigcap_{j\le k-1}{H_j}:\bigcap_{j\le k}{H_j}\right].
\]
By Lemma \ref{lem:index}, the first factor is $[G_k:H_1]=N_1k$ with $N_1\le N$. The other factors have the form
\[
\left[\bigcap_{j\le r}{H_j}:\bigcap_{j\le r+1}{H_j}\right].
\]
In order to bound them, we repeatedly use the following fact: If $A$ is a finite group and $B,C<A$ are subgroups, then $[B:B\cap C]\le[A:C]$. Choosing
\[
A=\bigcap_{2\le j\le r}{H_j},\quad B=\bigcap_{j\le r}{H_j},\quad\text{\rm and}\quad C=\bigcap_{2\le j\le r+1}{H_j},
\]
we can remove $H_1$ from the intersections and get
\[
\left[\bigcap_{j\le r}{H_j}:\bigcap_{j\le r+1}{H_j}\right]\le\left[\bigcap_{2\le j\le r}{H_j}:\bigcap_{2\le j\le r+1}{H_j}\right].
\]
Applying again and again the fact in a similar fashion, we obtain  
\[
\left[\bigcap_{j\le r}{H_j}:\bigcap_{j\le r+1}{H_j}\right]\le\left[\bigcap_{2\le j\le r}{H_j}:\bigcap_{2\le j\le r+1}{H_j}\right]\le\cdots\le[H_r:H_r\cap H_{r+1}].
\]
We now claim that for every $r$ we have 
\[
[H_r:H_r\cap H_{r+1}]\leq N
\]
where $N$ is the same as in Lemma \ref{lem:index}. This is enough to conclude the proof of the claim since we showed that $[G:\bigcap{H_j}]$ is a product of $k$ integers, the first of which is $N_1k$ with $N_1\le N$ and all the others are smaller than $N$.

For simplicity of notation, we show that $[H_r:H_r\cap H_{r+1}]\leq N$ for $r=1$. 

We consider the action of $H_1$ on $G_k/H_2$. The stabiliser of $H_2\in G_k/H_2$ in $H_1$ is $H_1\cap H_2$, so that $[H_1:H_1\cap H_2]$ is the cardinality of the orbit $O:=\{h_1H_2:h_1\in H_1\}$ of $H_2\in G_k/H_2$ under $H_1$. We argue that 
\[
|O|k\le |G_k/H_2|.
\]
Since $|G_k/H_2|=N_2k$ with $N_2\le N$ (by Lemma \ref{lem:index}), this allows us to conclude $|O|\leq N_2$ and finish the proof. In order to show that $|O|k\le |G_k/H_2|$ it is enough to show that distinct points $h_1H_2,h'_1H_2\in O$ have disjoint orbits under the action of $\langle\rho_k\rangle$ on $G_k/H_2$. To this purpose, consider arbitrary $\rho,\rho'\in\langle \rho_k\rangle$. Suppose that $\rho h_1H_2=\rho'h'_1H_2$. We show that $h_1H_2=h'_1H_2$ and $\rho=\rho'$. Note that 
\[
\rho h_1H_2=\rho'h'_1H_2\Longleftrightarrow(\rho')^{-1}\rho=h'_1h_2h^{-1}_1
\]
for some $h_2\in H_2$. As outer automorphisms, the right-hand side fixes the conjugacy class of the inclusion of the intersection of $\pi_1(\bar{S}_1,x_k)$ and $\pi_1(\bar{S}_2,x_k)$, which contains $\pi_1(W_1^+,x_k)$ (by Lemma \ref{lem:pi1injective}). The exact same argument as in Lemma \ref{lem:free} gives that $\langle \rho_k\rangle$ acts freely on the conjugacy classes of said inclusion, and so we must have $\rho=\rho'$, and in turn $h_1=h'_1h_2$. Hence, $h_1H_2=h'_1H_2$ as required.
\end{proof}

\begin{claim}
\label{claim:intersection trivial}
$\bigcap H_j=\{{\rm id}\}$.
\end{claim}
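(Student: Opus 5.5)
An outer automorphism $\Phi\in\bigcap_j H_j$ fixes the conjugacy class of every sector inclusion $\iota_j$. Since $\pi_1(M_k,x_k)$ is generated by the images of the $\iota_j$ (van Kampen applied to the decomposition into sectors), the plan is to choose a good representative of $\Phi$ and show it is inner.

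\textbf{Step 1: normalise on one sector.} Pick a representative $\phi\in{\rm Aut}(\pi_1(M_k,x_k))$ of $\Phi$. Since $\Phi\in H_1$, after composing with an inner automorphism we may assume $\phi\circ\iota_1=\iota_1$, so $\phi$ is the identity on $\pi_1(\bar S_1,x_k)$.

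\textbf{Step 2: propagate to the adjacent sector.} Since $\Phi\in H_2$, there is $g_2$ with $\phi\circ\iota_2=c_{g_2}\circ\iota_2$. Both $\phi$ and $c_{g_2}$ are the identity on $\pi_1(W_1^+,x_k)=\pi_1(\bar S_1)\cap\pi_1(\bar S_2)$. Hence $g_2$ centralises the image of $\pi_1(W)$. Here $\pi_1(W)$ is a non-elementary subgroup: $W$ is a compact hyperbolic manifold with totally geodesic boundary, and its group is non-abelian and not virtually cyclic. By Lemma \ref{lem:pi1injective} it injects into $\pi_1(M_k)$, which is torsion-free hyperbolic (Corollary \ref{cor:gromov hyperbolic}). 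In a torsion-free hyperbolic group, the centraliser of a non-elementary subgroup is trivial. So $g_2=1$ and $\phi$ is the identity on $\pi_1(\bar S_2,x_k)$.

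\textbf{Step 3: induct around the cycle.} Repeating Step 2 for $j=3,\dots,k$, the automorphism $\phi$ restricts to the identity on every $\pi_1(\bar S_j,x_k)$.

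\textbf{Step 4: conclude.} Since these subgroups generate $\pi_1(M_k,x_k)$, we get $\phi={\rm id}$, so $\Phi$ is trivial.

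\textbf{Main obstacle: basepoints in Step 4.} The sectors are glued cyclically, so the nerve of the decomposition is a circle. Van Kampen at the common basepoint $x_k\in B_k$ must therefore be checked carefully: all $\bar S_j$ contain $B_k\ni x_k$, so one must verify that no extra HNN-type stable letter arises from the cycle. I would argue this by building $M_k$ sequentially as $\bar S_1\cup\dots\cup\bar S_k$. Each union $\bar S_1\cup\dots\cup\bar S_j$ is attached to the next sector along a connected set containing $x_k$: it is $W_j^+$ at each intermediate step, and $W_k^+\cup W_1^-$ at the final step, which is connected since the two walls meet along $B_k$. So each step is an honest van Kampen amalgamation at basepoint $x_k$, and the fundamental group is generated by the sector subgroups.

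A further caveat is that the identification $\pi_1(\bar S_j,x_k)$ in the definition of $H_j$ must use the basepoint $x_k\in B_k$, so that the conjugacy classes $[\iota_j]$ agree with these subgroups. The non-elementarity of $\pi_1(W)$ in Step 2 also needs a brief justification, which is straightforward for $d\ge 3$.
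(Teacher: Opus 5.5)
Your proof is correct and is essentially the paper's argument. You normalise the representative to be the identity on $\pi_1(\bar S_1,x_k)$, use $[\phi]\in H_j$ to write $\phi=c_{g_j}$ on $\pi_1(\bar S_j,x_k)$, observe that $g_j$ centralises the shared wall group, and conclude because the sector subgroups generate $\pi_1(M_k,x_k)$.

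The one difference is that you show the centraliser is trivial directly in the torsion-free hyperbolic group $\pi_1(M_k)$, whereas the paper uses Lemma~\ref{lem:centre} in $Q_k$. Your route is a bit more self-contained and works for every $k\ge 2$.

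There is also a small indexing slip in your van Kampen remark. At the final step $\bar S_k$ is attached along $W_k^-\cup W_k^+=W_{k-1}^+\cup W_1^-$. As written, $W_k^+\cup W_1^-$ is a single wall, since $W_k^+=W_1^-$.
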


\begin{proof}[Proof of the claim]
Let $[\phi]\in \bigcap H_j$. In particular, $[\phi]\in H_1$, so that, up to changing the representative in the outer class, we can assume that the restriction of $\phi$ to $\pi_1(\bar{S}_1,x_k)$ is the identity. We claim that $\phi$ is also the identity on all $\pi_1(\bar{S}_j,x_k)$ and, hence, $\phi$ is the identity because the $\pi_1(\bar{S}_j,x_k)$'s generate $\pi_1(M_k,x_k)$. 

We explain the argument for $j=2$, the other cases follow inductively along the same lines. Since $[\phi]\in H_2$ we know that the restriction of $\phi$ to $\pi_1(\bar{S}_2,x_k)$ is a conjugation by some $g\in\pi_1(M_k,x_k)$. Observe that (inside $\pi_1(M_k,x_k)$) we have
\[
\pi_1(W_2^-,x_k)=\pi_1(W_1^+,x_k)
\]
(recall that both $W_1^+=W_2^-\subset\bar{S}_1\subset M_k$ are $\pi_1$-injective by Lemma \ref{lem:pi1injective}). As the restriction of $\phi$ to $\pi_1(\bar{S}_1,x_k)$ is the identity, we have 
\[
g\alpha g^{-1}=\phi(\alpha)=\alpha
\]
for every $\alpha\in\pi_1(W_1^+,x_k)=\pi_1(W_2^-,x_k)$. So $g$ centralises $\pi_1(W_1^+,x_k)$. From Lemma \ref{lem:centre} we deduce that $g=1$ (the lemma applies similarly to the proof of Lemma \ref{lem:free}) and, hence, the restriction of $\phi$ to $\pi_1(\bar{S}_2,x_k)$ is the identity as desired.
\end{proof}

Putting together the above claims shows that $|G_k|=[G_k:\bigcap{H_j}]$ divides $(N!)^kk$. This concludes the proof of Theorem \ref{thmA:Out}. 
\end{proof}

\subsection{Fixed subgroups and Theorem \ref{thmA:Sylow}}
We now move on to the proof of Theorem \ref{thmA:Sylow}, which we restate.

\Sylow*

For the proof, we need the following definition and fact.

\begin{dfn}[Hall Subgroup]
\label{dfn:hall}
Let $G$ be a finite group and let $\Pi$ be a set of primes. A subgroup $H<G$ is a {\em Hall $\Pi$-subgroup} if the prime divisors of the order of $H$ lie in $\Pi$ and the order is coprime with the index.
\end{dfn}

By a foundational result of Wielandt \cite{Wieland}, if a finite group $G$ has a {\em nilpotent} Hall $\Pi$-subgroup $H$, then all Hall $\Pi$-subgroups of $G$ are conjugate to a subgroup of $H$. 

We can now start the proof.

\begin{proof}
Throughout this proof, assume that $k,k'$ are sufficiently large.

Suppose by contradiction that there exists an isomorphism 
\[
\psi:\pi_1(M_k)\to \pi_1(M'_{k'}).
\]
This also induces isomorphisms 
\[
\psi_{\rm aut}:{\rm Aut}(\pi_1(M_k))\to{\rm Aut}(\pi_1(M'_{k'}))\quad\text{\rm and}\quad\psi_{\rm out}:{\rm Out}(\pi_1(M_k))\to{\rm Out}(\pi_1(M'_{k'}))
\]
For simplicity we introduce the notation $G_k:={\rm Out}(\pi_1(M_k))$ and $G'_{k'}:={\rm Out}(\pi_1(M'_{k'}))$.

We now consider the rotation $\rho_k\in G_k$ and make a couple of observations. First, by Corollary \ref{cor:order_k}, if $k$ is large enough, then $\rho_k$ has order $k$ in $G_k$. In particular $k||G_k|$. By Theorem \ref{thmA:Out} applied to the GT-pair $(M,B)$, we have $|G_k||(N!)^kk$ for some fixed $N$ only depending on $(M,B)$. If $k$ has only prime factors larger than $N$ we conclude that the order and the index of $\langle\rho_k\rangle<G_k$ are coprime.

We transfer now the above considerations to the group $G'_{k'}$ (isomorphic to $G_k$). We have the following. By Theorem \ref{thmA:Out} applied to the GT-pair $(M',B')$, we have 
\[
|G'_{k'}|\left||(N'!)^{k'}k'\right.
\]
for some fixed $N'$ only depending on $(M',B')$. Since $|G_k|=|G'_{k'}|$ and $k||G_k|$ we get 
\[
k|(N'!)^{k'}k'.
\]
Hence, if $k$ has only prime factors larger than $N'$, we conclude that $k|k'$.

By Corollary \ref{cor:order_k}, if $k'$ is large enough then $\rho'_{k'}$ has order $k'$ in $G'_{k'}$. By the above considerations, as $G_k$ is isomorphic to $G'_{k'}$, it follows that the order and the index of 
\[
\langle\left(\rho'_{k'}\right)^{k'/k}\rangle<G'_{k'}
\]
are coprime. Set $\Pi$ to be the set of primes $>\max\{N,N'\}$. Observe that both
\[
\langle\psi_{\rm out}(\rho_k)\rangle\quad\text{\rm and}\quad\langle\left(\rho'_{k'}\right)^{k'/k}\rangle
\]
are (cyclic-hence-nilpotent) Hall $\Pi$-subgroups of $G'_{k'}$ (see Definition \ref{dfn:hall}). By Wielandt's Theorem \cite{Wieland}, it follows that they are conjugate to each other and, hence, that 
\[
\bar{\rho}:=\psi_{\rm out}(\rho_k)\quad\text{\rm and}\quad\bar{\rho}':=\left(\rho'_{k'}\right)^{k'/k}
\]
are conjugate, say $\bar{\rho}'=\bar{\phi}\bar{\rho}\bar{\phi}^{-1}$ for some $\bar{\phi}\in G'_{k'}$. Passing to automorphisms, we get
\[
\rho'c_g=\phi\rho\phi^{-1}
\]
where we chose
\[
\rho:=\psi_{\rm aut}(\rho_k)\quad\text{\rm and}\quad\rho':=\left(\rho'_{k'}\right)^{k'/k},
\]
where $\phi$ is an arbitrary representative of $\bar{\phi}$, and where $c_g$ denotes the conjugation by the element $g\in\pi_1(M'_{k'})$.

Note that $\phi\rho\phi^{-1}$ fixes the subgroup $\phi\psi(\pi_1(B_k))<\pi_1(M'_{k'})$.

Now recall that $\pi_1(M'_{k'})$ embeds in $Q'_{k'}=\pi_1(M'-B')/\langle\langle(\gamma')^{k'}\rangle\rangle$ (Lemma \ref{lem:van kampen}) and the composition of $\rho'_{k'}$ with this embedding coincides with the conjugation by $\gamma'$ that is $\rho'_{k'}=c_{\gamma'}$ (Lemma \ref{lem:rotation is conjugation}).

Thus, setting $\delta:=(\gamma')^{k'/k}$, we have $\rho'c_g=c_\delta c_g=c_{\delta g}$ on the image of $\pi_1(M'_{k'})$. In particular, the fixed subgroup of $c_{\delta g}$ is the centraliser in $\pi_1(M'_{k'})$ of $\delta g$. By the above discussion, $c_{\delta g}=\phi\rho\phi^{-1}$ on $\pi_1(M'_{k'})$ so the fixed subgroup in $\pi_1(M'_{k'})$ of $c_{\delta g}$ is also equal to the fixed subgroup of $\phi\rho\phi^{-1}$ which is $\phi\psi(\pi_1(B_k))$.

As $c_{\delta g}=\phi\rho\phi^{-1}$ on $\pi_1(M'_{k'})$ and $\phi\rho\phi^{-1}$ has order $k$ in $G'_{k'}$, we deduce that $(\delta g)^k\in Q'_{k'}$ centralises $\pi_1(M'_{k'})$ and, hence, $(\delta g)^k=1$ by Lemma \ref{lem:centre}. By Lemma \ref{lem:filling_finite_order}, we must have that $\delta g$ is conjugate to a power of $\gamma'$ in $Q'_{k'}$, say 
\[
\delta g=h(\gamma')^rh^{-1}
\]
for some $h\in Q'_{k'}$. We now recall that $\delta=(\gamma')^{k'/k}$. We also recall that $\pi_1(M'_{k'})$ is normal in $Q'_{k'}$ and that the quotient $Q'_{k'}/\pi_1(M'_{k'})$ is an order $k'$ cyclic subgroup generated by $\gamma'$ (see Lemmas \ref{lem:van kampen} and \ref{lem:rotation is conjugation}). Projecting to the quotient the above relation, we get that $r=k'/k$ and, hence, we have $\delta g=h\delta h^{-1}$. 

Thus, the centraliser $C(\delta g)$ of $\delta g$ in $Q'_{k'}$ is the centraliser of $h\delta h^{-1}$, which, in turn, is the conjugate by $h$ of the centraliser $C(\delta)$ of $\delta$ in $Q'_{k'}$, that is, $C(\delta g)=hC(\delta)h^{-1}$. The centraliser of $\delta g$ in $\pi_1(M'_{k'})$ is the intersection
\[
C(\delta g)\cap\pi_1(M'_{k'})=hC(\delta)h^{-1}\cap\pi_1(M'_{k'})=h\left(C(\delta)\cap\pi_1(M'_{k'})\right)h^{-1}
\]
where the latter equality follows from the fact that $\pi_1(M'_{k'})$ is normal in $Q'_{k'}$. In particular, as the conjugation by $h$ induces an automorphism of $\pi_1(M'_{k'})$, the subgroup $C(\delta g)\cap\pi_1(M'_{k'})$ is isomorphic to $C(\delta)\cap\pi_1(M'_{k'})$. The latter is equal to $\pi_1(B'_{k'})$.

We are now able to conclude. On the one hand, we just proved that the centraliser of $\delta g$ in $\pi_1(M'_{k'})$ is isomorphic to $\pi_1(B'_{k'})$. On the other hand, we know that the centraliser of $\delta g$ in $\pi_1(M'_{k'})$ is equal to the fixed subgroup of $\phi\rho\phi^{-1}$, which is $\phi\psi(\pi_1(B_k))$. Thus $\pi_1(B_k)=\pi_1(B)$ is isomorphic to $\pi_1(B'_{k'})=\pi_1(B')$. By Mostow rigidity, recalling that $d\ge 5$, it follows that $B'$ is isometric to $B$. This contradicts our initial assumptions and finishes the proof.
\end{proof}

\bibliographystyle{alpha}
\bibliography{biblio}

\end{document}